\documentclass[onefignum,onetabnum]{siamart190516}


\usepackage{amsmath}
\usepackage{amssymb}
\usepackage{comment}
\usepackage{graphicx}
\usepackage[caption=false]{subfig}
\usepackage{footnote}
\usepackage{textcomp}
\usepackage{mathrsfs}
\usepackage{mathtools}
\usepackage{epstopdf}
\usepackage{array}
\usepackage[maxfloats=99]{morefloats}
\usepackage{cases}
\usepackage{color}
\usepackage{cite}
\usepackage{bm}
\usepackage{mathtools}
\usepackage{enumitem}
\graphicspath{{./fig/}}

\newsiamremark{remark}{Remark}
\newsiamremark{Result}{Main Result}
\newsiamremark{hypothesis}{Hypothesis}
\crefname{hypothesis}{Hypothesis}{Hypotheses}
\newsiamthm{claim}{Claim}

\headers{Classical limit for VMSE with randomness}{S. Chen, Q. Li, and X. Yang}

\title{Classical limit for the varying-mass Schr\"odinger equation with random inhomogeneities}

\author{Shi Chen\thanks{Department of Mathematics, University of Wisconsin-Madison, Madison, WI, 53706, USA (schen636@wisc.edu).}
\and%
Qin Li\thanks{Department of Mathematics and Wisconsin Institute for Discovery, University of Wisconsin-Madison, Madison, WI, 53706, USA (qinli@math.wisc.edu).}
\and %
Xu Yang\thanks{Department of Mathematics, University of California, Santa Barbara, CA, 93106, USA (xuyang@math.ucsb.edu).}
}

\usepackage{amsopn}


\newcommand{\ri}{{\mathrm{i}}}

\newcommand{\eps}{\varepsilon}


\newcommand{\ue}{u^{\varepsilon}}
\newcommand{\uec}{\overline{u^{\varepsilon}}}
\newcommand{\we}{W^{\varepsilon}}
\newcommand{\invpi}{\frac{1}{(2\pi)^d}}
\newcommand{\bbr}{\mathbb{R}}
\newcommand{\epsh}{\frac{\varepsilon}{2}}
\newcommand{\half}{\frac{1}{2}}
\newcommand{\rmd}{\mathrm{d}}

\newcommand{\frmm}{\tilde{m}_0}

\newcommand{\stfrm}{\hat{m}_1(\omega,p)}
\newcommand{\frmrandalter}{\tilde{m}_1(\tau,p)}

\newcommand{\calo}{\mathcal{O}}
\newcommand{\teps}{t/\varepsilon}
\newcommand{\xeps}{x/\varepsilon}
\newcommand{\schr}{Schr\"odinger }
\newcommand{\nkl}{N_{\text{KL}}^{\varepsilon}}

\ifpdf
\hypersetup{
  pdftitle={Classical limit for VMSE with randomness},
  pdfauthor={S. Chen, Q. Li, and X. Yang}
}
\fi




\begin{document}

\maketitle

\begin{abstract}
  The varying-mass Schr\"odinger equation (VMSE) has been successfully applied to model electronic properties of semiconductor hetero-structures, for example, quantum dots and quantum wells. In this paper, we consider VMSE with small random heterogeneities, and derive a radiative transfer equation as its asymptotic limit. The main tool is to systematically apply the Wigner transform in the classical regime when the rescaled Planck constant $\epsilon\ll 1$, and expand the Wigner equation to proper orders of $\epsilon$. As a proof of concept, we numerically compute both VMSE and its limiting radiative transfer equation, and show that their solutions agree well in the classical regime.
\end{abstract}

\begin{keywords}
  Varying-mass Schr\"odinger equation, random inhomogeneities, semiclassical limit, radiative transfer
\end{keywords}

\begin{AMS}
  81Q20, 35Q40
\end{AMS}

\section{Introduction}
Quantum transport in quantum-size structures has become rather important with the recent progress in crystal growth technology and the designing of heterostructure nanoelectronic devices. In these applications, material property is spatially dependent and anisotropic, and accordingly, the Schr\"odinger equation is equipped with an effective mass term to incorporate the spatial variation~\cite{TsOgMi:1991,ScSc:2020}. The simulations have been used to understand, for example, the electron dynamics in a crystal with slowly varying composition, the current-voltage characteristics of quantum-well resonant tunneling diodes, among many others~\cite{ThEiHe:89, MoBr:84, Ei:90, VR:90,BeDu:1966,CoDuMaTi:1966}. The mathematical studies are concentrated on the derivation of the model from the classical Schr\"odinger equation by analyzing the electronic band structure~\cite{PoRi:1996,AlCaPi:2004,AlPi:2005,CaSp:2011,Sp:2006}. In many of these examples, one cannot typically a-priori uniquely determine the effective mass term experimentally, and thus randomness is included to describe the inhomogeneity. In other examples, the effective mass needs to be specifically designed for the device to have certain desirable property, such as cloaking~\cite{ZhGeSuZh:2008}, and for such an inverse problem, a thorough understanding of the forward problem with random media is a necessity.

We are interested in deriving the asymptotic limit of the following varying-mass Schr\"odinger equation
\begin{equation}\label{eqn:effect_mass_schr1}
\ri\varepsilon \partial_t \ue(t,x) + \frac{1}{2}\varepsilon^2 \nabla_x\cdot( m^\eps(t,x) \nabla_x \ue(t,x)) = 0\,,
\end{equation}
where $t>0$, $x \in \bbr ^d$ with $d \in \mathbb{N}$ and $\varepsilon\ll 1$ is the rescale Planck constant. The varying mass $m^\eps$ is assume to be random and highly oscillatory, with a given covariance matrix in time and space. We shall assume that $\ue$ decays fast enough at infinity to validate all the derivations. One goal of the paper is to show that in the $\varepsilon\to0$ regime, the Wigner transform of the solution converges to a special radiative transfer equation.

The problem is motivated by a fact that simulating~\cref{eqn:effect_mass_schr1} is extremely challenging in the classical regime ($\varepsilon \ll 1$), and deriving its asymptotic limit helps in guiding the design of the numerical scheme. The challenges are two-folded. In deterministic regime, $m^\eps(t,x)$ is a deterministic highly oscillatory function in $(t,x)$, and the oscillation is seen in the solution $\ue$ as well. Standard numerical solvers, in order to be accurate, have to resolve the small wavelength in $\ue(t,x)$. A typical example is the standard finite difference method used in~\cite{MaPiPo:99, MaPiPoSt:02} that requires a mesh size and time step of order $o(\varepsilon)$. The time-splitting spectral method~\cite{BaJiMa:02, BaJiMa:03} was developed to solve the Schr\"odinger equation with constant mass and varying potential term, and it fully makes use of the fast Fourier transform (FFT) to enhance computing speed. It does improve the mesh size to be of order $\mathcal{O}(\varepsilon)$ for constant-mass Schr\"odinger equation, but its application to VMSE does not appear to be straightforward. A bigger problem comes from the randomness in $m^\eps$. Since only the covariance of $m^\eps$ is given, numerically one has to find many realizations and compute the deterministic Schr\"odinger equation before calculating the ensemble mean or variance of the solution. The computational cost of each realization, however, increases at least algebraically as $\varepsilon\to 0$, as details in the random fluctuation become more and more important.

Many works are done to overcome the first difficulty, that is to obtain accurate numerical solutions without mesh resolving. One main approach is to explore the WKB-type ansatz, e.g., Gaussian beam methods~\cite{JiWuYa:08, JiMaSp:11} and frozen Gaussian approximation~\cite{HeKl:84, LuYa:12}. To a large extent, one applies the WKB-type ansatz $\ue(t,x) = A(t,x)\exp\left( \frac{i S(t,x)}{\varepsilon} \right)$ and derive the eikonal equation for $S(t,x)$ and transport-like equation for $A(t,x)$, with small scale $\varepsilon$ eliminated from the dynamics of $S(t,x)$ and $A(t,x)$. No such types of methods have been applied to efficiently solve~\cref{eqn:effect_mass_schr1} in the literature yet, not to mention its application to systems that present randomness. Another competing approach is to firstly derive, directly from the equation using the Wigner transform, the asymptotic limit on the theoretical level, and then impose numerical tricks that take advantage of the theoretical understandings. The hope is to develop methods that are ``asymptotic preserving'' (AP), meaning the methods preserve the asymptotic limit automatically with mesh size relaxed from small scale requirement. Methods such as~\cite{BaJiMa:02, BaJiMa:03,ShLuCa:2011,JiLuCa:2014,ChJiLi:2015,ChJiLi:2013,JiNo:2006,JiNo:2007} are all of this type.

We use the second approach in this paper. In particular, we are not yet interested in developing AP schemes, but rather deriving the asymptotic limit of VMSE~\cref{eqn:effect_mass_schr1} first using the Wigner transform~\cite{GeMaMaPo:1997}, a main tool in classical theory. The literature on deriving the asymptotic equations for wave propagation in random media is very rich~\cite{Sp:77,ErYa:2000,BaPaRy:2002,BaPi:2006,BaKoRy:2011, LuSp:07,ChJiLi:2015,ChJiLi:2013,BoGaSo:2019,BoGa:2018,GuRy:2016}. Most of the work starts with the Schr\"odinger equation with constant mass, and the randomness and high oscillations are introduced through the potential term. When it is the effective mass term that is random and highly oscillatory, the process of the derivation is similar but is much more delicate, as will be detailed later in this paper. As a proof of concept, we numerically demonstrates the derived radiative transfer equation by carefully computing and comparing its solution to the one of VMSE~\cref{eqn:effect_mass_schr1}, and show that the two solutions agree.

We note that the computation of the limiting Wigner equation is rather standard: we apply the standard WENO method. To compute VMSE as the reference is significantly harder due to the above listed reasons. To deal with the randomness, we employ the recent development in uncertainty quantification, and utilize KL (Karhunen-Lo\'eve) decomposition~\cite{loeve,Xi:2010} for representing the randomness, upon which, Monte Carlo sampling is used for each random component. For a high accuracy, a large number of random variables are included to present the fine structure of the randomness, and accordingly, fine discretization in the spatial domain is needed. We have not been able to find examples in the literature that study the asymptotic limit numerically except an attempt using Monte Carlo solver for wave equation in~\cite{BaPi:2006}.

The rest of the paper is organized as follows. To better illustrate the derivation, we start with a simpler case where $m^\eps = m_0(t,x)$ is deterministic, and independent of $\varepsilon$, and derive the limiting radiative transfer equation by the Wigner transform in~\Cref{sec:Schr_der}. In~\Cref{sec:Schr_ran}, we systematically introduce the derivation of the limiting equation for the varying-mass Schr\"odinger equation~\cref{eqn:effect_mass_schr1} with random heterogeneities. We present our numerical validation in~\Cref{sec:numerics} and make conclusive remarks in~\Cref{sec:conclusion}.

\section{Wigner transform of VMSE in the deterministic setting}\label{sec:Schr_der}

As a preparation, in this section, we first derive the classical limit for VMSE~\cref{eqn:effect_mass_schr1} with deterministic and slow-varying mass. The extension to incorporate the randomness is left to~\Cref{sec:Schr_ran}. This is to consider:
\begin{equation}\label{eqn:effect_mass_schr_m}
\ri\varepsilon \partial_t \ue(t,x) + \frac{1}{2}\varepsilon^2 \nabla_x\cdot( m_0(t,x) \nabla_x \ue(t,x)) = 0\,,\quad \ue(0,x) = \ue_{\mathrm{I}}(x)\,.
\end{equation}
The varying mass $m_0$ is a real function of $x$ and is independent of $\varepsilon$. It is taken to be deterministic. $\ue$ is a complex function and is associated with some primary physical quantities. The most basic ones are particle density $\rho^\varepsilon$ and current density $J^\varepsilon$, which are calculated by
\[
\rho^{\varepsilon}(t,x) = |\ue(t,x)|^2\,,\quad J^{\varepsilon}(t,x) = \varepsilon\text{Im}\left(m_0(t,x)\overline{\ue(t,x)}\nabla_x\ue(t,x)\right)\,.
\]
They are both quadratic functionals of $\ue(t,x)$. It is straightforward to derive the following conservation law:
\[
\partial_t \rho^{\varepsilon} + \nabla_x\cdot J^{\varepsilon} = 0\,.
\]

A more general definition of physical observables can be given using phase space symbols and Weyl quantization~\cite{Ho:85}. To make it more explicit, for $T>0$, let $\{a(t)\}_{t\in[0,T]}\subset \mathcal{S}'(\mathbb{R}^{2d})$ be a family of tempered distributions serving as symbols, then using Weyl quantization, we can define a family of pseudo-differential operators $\{a^{\mathrm{W}}(t,x,\varepsilon D_x)\}_{t\in[0,T]}$ on Schwartz space $\mathcal{S}(\mathbb{R}^d)$: for each $t\in[0,T]$
\begin{equation}
(a^{\mathrm{W}}(t,x,\varepsilon D_x)f)(x) = \frac{1}{(2\pi)^d}\int_{\mathbb{R}^{2d}}a\left(t, \frac{x+y}{2},\varepsilon k \right) f(y) e^{\ri (x-y)k}\rmd y \rmd k\,,
\end{equation}
where $\varepsilon D_x = -\ri \varepsilon \nabla_x$. The expectation value of the symbol $a(t)$ at time $t$ is then defined as a quadratic functional of wave function $\ue(t)$:
\begin{equation}
a[\ue(t)] = \left\langle \ue(t), a^{\mathrm{W}}(t,x,\varepsilon D_x)\ue(t)\right\rangle_{L^2}\,,
\end{equation}
where $\langle\cdot,\cdot\rangle_{L^2}$ denotes the $L^2$-inner product.

\begin{remark}\label{rem:wellpose}
We consider Schwartz solutions of VMSE~\cref{eqn:effect_mass_schr_m} in the following. Suppose $m_0$ and all its derivatives are bounded, {\it i.e.}, $m_0 \in C^{\infty}([0,T]\times\mathbb{R}^d)$ with
\begin{equation}\label{eqn:m0_H1}
|\partial_t^\beta\partial_x^{\alpha} m_0(t,x)| \leq C_{\alpha,\beta} \quad \forall \alpha \in \mathbb{N}^d\,, \quad \beta\in\mathbb{N}\,, \quad (t,x)\in[0,T]\times\mathbb{R}^d  \,,
\end{equation}
and that $m_0$ satisfies elliptic condition
\begin{equation}\label{eqn:m0_H2}
m_0(t,x)\geq C >0, \quad \forall (t,x)\in[0,T]\times\mathbb{R}^d\,,
\end{equation}
then by the regularity theory of evolution equations~\cite{Ka:1985,Li:1961}, there is a unique Schwartz solution in $C^\infty([0,T];\mathcal{S}(\mathbb{R}^d))$ to the VMSE~\cref{eqn:effect_mass_schr_m} with $\ue_\mathrm{I}\in \mathcal{S}(\mathbb{R}^d)$.
\end{remark}

Wigner transform is a technique explored in~\cite{RyPaKe:1996} for the \schr equation with random potential, and has been demonstrated as a very powerful tool for investigating the classical limit~\cite{GeMaMaPo:1997}. Fixed $\varepsilon>0$, given $\ue(t) \in \mathcal{S}(\mathbb{R}^d)$, it is defined as a function on the phase space:
\begin{equation}\label{eqn:wigner_transform}
\we(t,x,k) = \invpi \int_{\bbr^d}e^{\ri ky}\ue\bigg(t,x-\epsh y\bigg) \uec\bigg(t,x+\epsh y\bigg)\mathrm{d}y\,.
\end{equation}
Here $\uec$ is the complex conjugate of $\ue$. Thus, the Wigner transform maps function on $\mathcal{S}(\mathbb{R}^d)$ to $\mathcal{S}(\mathbb{R}^{2d})$. This definition is essentially the Fourier transform of
\[
\left\langle x-\frac{\varepsilon}{2}y\middle|\ue\right\rangle \left\langle \ue\middle|x+\frac{\varepsilon}{2}y \right\rangle
\]
in the $y$ variable.

The Wigner transform loses phase information: meaning for any $S(t)$, the Wigner transform defined by $\ue(t)$ and that defined by $\ue(t) e^{\ri S(t)}$ are the same, and hence cannot capture the phase difference $S(t)$. Moreover, it is not guaranteed that $\we(t)$ is positive, and thus it does not serve directly as the particle density on the phase space. However, the quantum expectation of physical observables can be easily recovered using the Wigner function, namely,
\begin{equation}
a[\ue(t)]=\langle \ue(t), a^{W}(t,x,\varepsilon D_x) \ue(t) \rangle_{L^2} = \int_{\mathbb{R}^{2d}}a(t,x,k)\we(t,x,k)\rmd x \rmd k\,.
\end{equation}

In particular, the first and second moments of $\we(t,x,k)$ in velocity $k$ provide the particle density $\rho^{\varepsilon}(t,x)$ and the current density $J^{\varepsilon}(t,x)$:
\begin{equation}
\begin{aligned}
\int_{\bbr^d} \we (t,x,k) \rmd k = \rho^{\varepsilon}(t,x)\,,\quad \int_{\bbr^d} m_0(t,x) k \we (t,x,k) \rmd k = J^{\varepsilon}(t,x)\,.
\end{aligned}
\end{equation}
By plugging in the \schr equation, we derives the equation satisfied by $\we$ in the following Lemma.
\begin{lemma}\label{lem:W}
Suppose $\ue\in C^\infty([0,T];\mathcal{S}(\mathbb{R}^d))$ solves the VMSE~\cref{eqn:effect_mass_schr_m} with $m_0$ satisfying~\cref{eqn:m0_H1} and~\cref{eqn:m0_H2}. Then the Wigner transform $\we\in C^\infty([0,T];\mathcal{S}(\mathbb{R}^{2d}))$ of $\ue$ satisfies the Wigner equation
\begin{equation} \label{eqn:wignereqn}
\begin{aligned}
&\partial_t \we (t,x,k) + \frac{1}{\varepsilon}\mathcal{Q}^{\varepsilon}_1 \we (t,x,k) + \mathcal{Q}^{\varepsilon}_2 \we (t,x,k) = \varepsilon\mathcal{Q}^{\varepsilon}_3 \we (t,x,k)\,, \\
&\we(0,x,k) = \we_{\mathrm{I}}(x,k)\,,
\end{aligned}
\end{equation}
where the operators $\mathcal{Q}^\varepsilon_i$'s are given by
\begin{equation}\label{eqn:q1}
\mathcal{Q}^{\varepsilon}_1 \we (t,x,k)
= \frac{|k|^2}{2}\int_{\bbr^d} \frac{e^{\ri px}}{(2\pi)^d}\tilde{m}_0(t,p) \ri \left[\we\left(t,x,k-\epsh p\right)-\we\left(t,x,k+\epsh p\right)\right]\rmd p
\end{equation}
\begin{equation}\label{eqn:q2}
\begin{aligned}
\mathcal{Q}^{\varepsilon}_2 \we (t,x,k)& \\
=& \frac{k}{2}\cdot\int_{\bbr^d} \frac{e^{\ri px}}{(2\pi)^d}\tilde{m}_0(t,p) \left[\nabla_x\we\left(t,x,k-\epsh p\right)+\nabla_x\we\left(t,x,k+\epsh p\right)\right]\rmd p
\end{aligned}
\end{equation}
\begin{equation}\label{eqn:q3}
\begin{aligned}
\mathcal{Q}^{\varepsilon}_3 \we (t,x,k)& \\
=& \frac{1}{8}\int_{\bbr^d} \frac{e^{\ri px}}{(2\pi)^d}\tilde{m}_0(t,p) \ri\left[\Delta_x\we\left(t,x,k-\epsh p\right)-\Delta_x\we\left(t,x,k+\epsh p\right)\right]\rmd p\\
&+\frac{1}{8}\int_{\bbr^d} \frac{e^{\ri px}}{(2\pi)^d}\tilde{m}_0(t,p) \ri |p|^2\left[\we\left(t,x,k-\epsh p\right)-\we\left(t,x,k+\epsh p\right)\right]\rmd p\,.
\end{aligned}
\end{equation}
The spatial Fourier transform of $m_0(t)$ is defined by
\begin{equation}
\tilde{m}_0(t,p) = \int_{\mathbb{R}^d}e^{-\ri pz}m_0(t,z)\rmd z\,.
\end{equation}
In addition, $\we_\mathrm{I}$ is the Wigner transform of initial condition $\ue_\mathrm{I}\in \mathcal{S}(\mathbb{R}^d)$\,.
\end{lemma}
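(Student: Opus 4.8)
The plan is to differentiate the Wigner transform \cref{eqn:wigner_transform} in $t$, substitute the VMSE \cref{eqn:effect_mass_schr_m} for the time derivatives, and then reorganize the resulting phase-space integral into the three operators in \cref{eqn:q1,eqn:q2,eqn:q3}. Write $x_- = x-\epsh y$ for the argument of $\ue$ and $x_+ = x+\epsh y$ for that of $\uec$. The Schwartz regularity guaranteed by \cref{rem:wellpose} lets me differentiate under the integral sign, so that $\partial_t\we = \invpi\int_{\bbr^d} e^{\ri ky}\big(\partial_t\ue(t,x_-)\,\uec(t,x_+) + \ue(t,x_-)\,\partial_t\uec(t,x_+)\big)\rmd y$. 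From \cref{eqn:effect_mass_schr_m} and its conjugate (using that $m_0$ is real) I have $\partial_t\ue = \tfrac{\ri\varepsilon}{2}\nabla\cdot(m_0\nabla\ue)$ and $\partial_t\uec = -\tfrac{\ri\varepsilon}{2}\nabla\cdot(m_0\nabla\uec)$, where the divergence in the first acts on $x_-$ and in the second on $x_+$.

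The central device is to replace the mass by its Fourier representation $m_0(t,z) = \invpi\int_{\bbr^d} e^{\ri pz}\tilde{m}_0(t,p)\,\rmd p$ and let the divergence fall on the exponential: for the first term this gives $[\nabla\cdot(m_0\nabla\ue)](t,x_-) = \invpi\int_{\bbr^d} e^{\ri p x_-}\tilde{m}_0(t,p)\big(\ri p\cdot(\nabla\ue)(t,x_-) + (\Delta\ue)(t,x_-)\big)\rmd p$, and analogously at $x_+$. The factor $e^{\ri p x_\mp} = e^{\ri px}e^{\mp\ri\epsh py}$ is what produces the momentum shift: inserted into the $y$-integral it turns the weight $e^{\ri ky}$ into $e^{\ri(k\mp\epsh p)y}$, replacing $k$ by $k-\epsh p$ in the $x_-$ term and by $k+\epsh p$ in the $x_+$ term, which is the origin of the shifted arguments $\we(t,x,k\mp\epsh p)$.

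Next I would express the remaining $y$-integrals—carrying $(\nabla\ue)(x_-)$, $(\Delta\ue)(x_-)$ and their $\uec$-mirror images—in terms of $\we,\nabla_x\we,\Delta_x\we$ at the shifted momentum. The key identities are $(\nabla\ue)(x_-)\,\uec(x_+) = \big(\half\nabla_x - \tfrac{1}{\varepsilon}\nabla_y\big)[\ue(x_-)\uec(x_+)]$ and $\ue(x_-)\,(\nabla\uec)(x_+) = \big(\half\nabla_x + \tfrac{1}{\varepsilon}\nabla_y\big)[\ue(x_-)\uec(x_+)]$, iterated to reach the Laplacians. Integrating by parts in $y$ (no boundary terms, since $\ue$ is Schwartz) converts every $\nabla_y$ under the integral into multiplication by $-\ri$ times the shifted momentum and reconstructs $\we$, $\nabla_x\we$, $\Delta_x\we$; this produces, for each term, prefactors built from the shifted momentum $k'=k-\epsh p$ (resp. $k''=k+\epsh p$), of orders $\varepsilon^{-1}$, $\varepsilon^0$, $\varepsilon^{1}$.

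The final—and most delicate—step is to substitute $k'=k-\epsh p$ and $k''=k+\epsh p$ back into these prefactors and collect by powers of $\varepsilon$. The expansions $p\cdot k' = p\cdot k - \epsh|p|^2$ and $|k'|^2 = |k|^2 - \varepsilon\,k\cdot p + \tfrac{\varepsilon^2}{4}|p|^2$ (and their $k''$ analogs) generate several cross terms; the point is that the $p\cdot k$ contributions and the $\varepsilon\,p\cdot\nabla_x\we$ contributions cancel exactly between the pieces, so that only $|k|^2$, $k$ and $|p|^2$ survive. Adding the $x_-$ and $x_+$ contributions then assembles the $\varepsilon^{-1}$ part into $\tfrac{1}{\varepsilon}\mathcal{Q}^{\varepsilon}_1\we$, the $\varepsilon^0$ part into $\mathcal{Q}^{\varepsilon}_2\we$, and the $\varepsilon^{1}$ part into $\varepsilon\mathcal{Q}^{\varepsilon}_3\we$, reproducing \cref{eqn:wignereqn}. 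I expect the main obstacle to be purely organizational bookkeeping—tracking these shift-induced cross terms and checking their cancellation order by order—together with the analytic point that under \cref{eqn:m0_H1} the mass need not decay, so $\tilde{m}_0(t,\cdot)$ must be read as a tempered distribution paired against the Schwartz function $\we$, which also legitimizes the Fubini interchange of the $p$- and $y$-integrations.
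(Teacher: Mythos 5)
Your proposal is correct, and I verified its key computational claims: with the shifted momenta $k'=k-\epsh p$, $k''=k+\epsh p$, integration by parts in $y$ turns your two-sided identities into the operator replacements $(\half\nabla_x-\tfrac{1}{\varepsilon}\nabla_y)\mapsto(\half\nabla_x+\tfrac{\ri}{\varepsilon}k')$ and $(\half\nabla_x+\tfrac{1}{\varepsilon}\nabla_y)\mapsto(\half\nabla_x-\tfrac{\ri}{\varepsilon}k'')$, and after expanding $p\cdot k'$, $|k'|^2$, $k'\cdot\nabla_x$ (and their $k''$ analogues) the $p\cdot k$ and $\varepsilon\, p\cdot\nabla_x\we$ cross terms do cancel exactly as you claim, leaving the coefficients $-\tfrac{\ri}{2\varepsilon}|k|^2$, $-\tfrac{1}{2}k\cdot\nabla_x$, $\tfrac{\ri\varepsilon}{8}\Delta_x$, $\tfrac{\ri\varepsilon}{8}|p|^2$ that assemble into \cref{eqn:q1}--\cref{eqn:q3}. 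Your route is, however, organized genuinely differently from the paper's proof in \cref{appendix}. There, the Fourier representation of $m_0$ enters only at the very last step (\cref{eqn:uu_to_W_2}); the bulk of the work is repeated integration by parts in $y$ with $m_0\left(x-\epsh y\right)$ kept in physical space, so derivatives fall on the mass and generate $\nabla_y m_0$ and $\Delta_y m_0$ terms, which are then managed by a symmetrization trick (writing $I_1=\tfrac12(I_1+I_{13})+\tfrac12(I_{11}+I_{12})$ in \cref{eqn:ibp_3}, and pairing $I_{21}$ with $I'_{21}$, $I_{22}$ with $I'_{22}$ in \cref{eqn:ibp_8}): the $\nabla_y m_0$ contributions cancel between $I_1$ and $I_2$, while the surviving $\Delta_y m_0$ term ($T_4$ in \cref{eqn:ibp_9}) is what becomes the $|p|^2$ piece of $\mathcal{Q}^{\varepsilon}_3$. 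In your version no derivative ever touches $m_0$ --- the divergence produces only the algebraic factor $\ri p$ --- and that same $|p|^2$ piece instead emerges from the momentum-shift expansion; you trade the paper's integration-by-parts gymnastics for algebraic bookkeeping in $(p,k)$, which also makes the origin of the shifted arguments $\we\left(t,x,k\mp\epsh p\right)$ transparent from the outset. Finally, your closing caveat is well taken and is glossed over in the paper: under \cref{eqn:m0_H1} the mass need not decay, so $\tilde{m}_0(t,\cdot)$ is only a tempered distribution (for $m_0\equiv 1$ it is $(2\pi)^d\delta$), and the $p$-integrals in either proof must be read as distributional pairings against Schwartz functions of $p$ --- a point already implicit in the statement of \cref{lem:W} itself.
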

The derivation is rather tedious, and we leave it to~\cref{appendix}. 

We now turn to the derivation of classical limit. Formally, we let $\varepsilon\to 0$ in the operator $Q_i^\varepsilon$'s and obtain:
\[
\frac{1}{\eps}\mathcal{Q}^{\varepsilon}_1 \we (t,x,k) =-\frac{|k|^2}{2}\nabla_x m_0(t,x)\cdot\nabla_k W^0(t,x,k) + O(\varepsilon^2)\,,
\]
and
\[
\mathcal{Q}^{\varepsilon}_2 \we (t,x,k) = m_0(t,x) k\cdot \nabla_x W^0(t,x,k) + O(\varepsilon^2)\,.
\]
This leads to the Liouville equation as a limit for~\cref{eqn:wignereqn}
\begin{equation}\label{eqn:limiteqn_m}
\partial_t W^0(t,x,k) + m_0(t,x)k\cdot\nabla_x W^0(t,x,k) - \frac{|k|^2}{2}\nabla_x m_0(t,x) \cdot \nabla_k W^0(t,x,k) + O(\varepsilon^2) = 0\,.
\end{equation}
Without the higher order terms, this limiting equation is the push-forward of the initial data
\begin{equation}
W^0(t,x,k) = W_{I}(\theta_{-t}(x,k))\,,
\end{equation}
under the flow that is generated by the Hamiltonian $H(t,x,k) = \frac{1}{2}m_0(t,x)|k|^2$, with the trajectories $\theta_t:\, \mathbb{R}^{2d}\to\mathbb{R}^{2d}$ follow the ODE:
\begin{equation}\label{eqn:hamilton_m}
\dot{x} = m_0(t,x)k\,,\quad \dot{k} = -\frac{1}{2} |k|^2 \nabla_x m_0(t,x)
\end{equation}
equipped with initial data:
\[
x(0,y,p) = y \,,\quad k(0,y,p) = p\,.
\]

The formal derivation above on obtaining asymptotic limit can be made rigorous. Indeed it is a direct consequence of the following classical result~\cite{LiPa:93,GeMaMaPo:1997}:
\begin{theorem}[Modification of Theorem 6.1 in~\cite{GeMaMaPo:1997}]\label{thm:gerard}
Consider the Cauchy problem
\begin{equation}
\ri \varepsilon \partial_t \ue - (P^{\varepsilon})^{\mathrm{W}}(t,x,\varepsilon D_x) \ue = 0, \quad \ue(0,x) = \ue_\mathrm{I}(x)\,,
\end{equation}
for $t>0$, $x\in \mathbb{R}^d$, where the Weyl operator $(P^{\varepsilon})^{\mathrm{W}}(t,x,\varepsilon D_x)$ is associated with the symbol $P^{\varepsilon}(t,x,k)$. Assume the symbol satisfies:
\begin{enumerate}[label=\roman*)]
  \item $\exists \sigma \in \mathbb{R}$, for all $\alpha$, $\beta\in\mathbb{N}_0$, there exists $C_{\alpha,\beta}>0$, such that for all $n,m\in \{1,\dots, d \}$, and for all $\varepsilon\in(0,\varepsilon_0]$, we have
  \begin{equation}
  \left| \frac{\partial^{\alpha+\beta}}{\partial x_n^{\alpha}\partial k_m^{\beta}}P^{\varepsilon}(t,x,k)\right|
  \leq C_{\alpha,\beta}(1+|k|)^{\sigma-\beta}\,,
  \end{equation}
  for all $(t,x,k)\in[0,T]\times\mathbb{R}^{2d}$\,,

  \item $(P^{\varepsilon})^\mathrm{W}(t,x,\varepsilon D_x)$ is essentially self-adjoint on $L^2(\mathbb{R}^d)$\,,

  \item $P^{\varepsilon}(t,x,k) = P^0(t,x,k) + \varepsilon Q^0(t,x,k) + o(\varepsilon)$ uniformly in $C([0,T];C_{\mathrm{loc}}^{\infty}(\mathbb{R}^{2d}))$\,.
\end{enumerate}
Then, if the initial data $\ue_\mathrm{I}$ is bounded in $L^2(\mathbb{R}^d)$, the Wigner transform $\we(t)\in \mathcal{S}'(\mathbb{R}^{2d})$ of $\ue(t)$, as $\varepsilon\rightarrow0$, converges uniformly on $[0,T]$ (in weak-$*$ sense) to the solution of
\begin{equation}
\begin{aligned}
&\partial_t W^0(t,x,k) + \nabla_k P^0(x,k) \cdot \nabla_x W^0(x,k) - \nabla_x P^0(x,k) \cdot \nabla_k W^0(t,x,k) = 0\,, \\
&W^0(0,x,k) = W_\mathrm{I}^0(x,k)\,,
\end{aligned}
\end{equation}
where the initial data $W_\mathrm{I}^0$ is the (weak-$*$) limit of Wigner transform of $\ue_\mathrm{I}$ as $\varepsilon\rightarrow0$. Furthermore, if the initial data $\ue_\mathrm{I}$ is $\varepsilon$-oscillatory, of which we refer to~\cite{GeMaMaPo:1997} for the definition, then the particle density
\[
\rho^{\varepsilon}(t,x) = |\ue(t,x)|^2 \to \rho^0(t,x) = \int_{\mathbb{R}^d} W^0(t,x,k) \rmd k \,,
\]
uniformly on $[0,T]$ as well.
\end{theorem}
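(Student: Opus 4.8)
The plan is to treat this as a weak-$\ast$ compactness statement for the family of Wigner transforms $\{\we(t)\}_{\varepsilon}$ together with a passage to the limit in the evolution equation they satisfy; the three hypotheses are precisely what make each step go through, and the conclusion then reduces to Theorem 6.1 of~\cite{GeMaMaPo:1997} once the two modifications---the explicit time dependence of the symbol and the presence of the first-order corrector $Q^0$---are accounted for. First I would record the uniform a priori bound: hypothesis (ii) and Stone's theorem make $(P^\varepsilon)^{\mathrm{W}}$ generate a unitary group on $L^2$, so $\|\ue(t)\|_{L^2}=\|\ue_\mathrm{I}\|_{L^2}$ is conserved and bounded independently of $\varepsilon$ and $t$. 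Since the Wigner transform is bounded in the dual $\mathcal{A}'$ of the test-symbol algebra $\mathcal{A}=\{\phi:\ \mathcal{F}_k\phi\in L^1\}$ by $\|\ue\|_{L^2}^2$, this gives a uniform bound on $\we$ in $C([0,T];\mathcal{A}')$.

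Next I would write the equation for $\we$ in weak form. Passing to the von Neumann equation for the rank-one density $\ue\otimes\uec$ and taking Weyl symbols with the $\varepsilon$-scaling yields
\[
\partial_t \we = \frac{1}{\ri\varepsilon}\big(P^\varepsilon \#_\varepsilon \we - \we \#_\varepsilon P^\varepsilon\big),
\]
where $\#_\varepsilon$ is the $\varepsilon$-Moyal product. Antisymmetry of the Moyal bracket cancels all even-order terms in its asymptotic expansion, so the right-hand side equals $\{P^\varepsilon,\we\}+\mathcal{O}(\varepsilon^2)$ with $\{\cdot,\cdot\}$ the Poisson bracket. Here hypothesis (i), the estimate $|\partial_x^\alpha\partial_k^\beta P^\varepsilon|\le C_{\alpha,\beta}(1+|k|)^{\sigma-\beta}$, is exactly what lets me pair this identity against a test symbol $\phi\in\mathcal{S}(\mathbb{R}^{2d})$ and bound the remainder uniformly in $\varepsilon$ in spite of the unbounded growth of $P^\varepsilon$ in $k$.

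I would then insert the expansion from hypothesis (iii), $P^\varepsilon=P^0+\varepsilon Q^0+o(\varepsilon)$, so that $\{P^\varepsilon,\we\}=\{P^0,\we\}+\varepsilon\{Q^0,\we\}+\{o(\varepsilon),\we\}$. The decisive observation---and the whole content of the modification relative to~\cite{GeMaMaPo:1997}---is that the corrector $Q^0$ appears only multiplied by $\varepsilon$, so that $\varepsilon\{Q^0,\we\}$ and the $o(\varepsilon)$ contribution both vanish as $\varepsilon\to0$ and only $P^0$ survives. To license this limit I need temporal compactness, which I read off from the weak equation as a uniform bound on $\partial_t\we$ in a space of test symbols with one extra power of $k$ and more derivatives; an Arzel\`a--Ascoli / Aubin--Lions argument then extracts a subsequence converging weak-$\ast$ in $\mathcal{A}'$, uniformly on $[0,T]$, to a limit $W^0$. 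Passing to the limit in the weak formulation shows $W^0$ solves $\partial_t W^0+\nabla_k P^0\cdot\nabla_x W^0-\nabla_x P^0\cdot\nabla_k W^0=0$ with initial datum the weak-$\ast$ limit $W^0_\mathrm{I}$ of the Wigner transform of $\ue_\mathrm{I}$.

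Finally, uniqueness for this transport equation---its solution is the push-forward of $W^0_\mathrm{I}$ along the Hamiltonian flow of $P^0$, which is well defined because hypothesis (i) renders the flow locally Lipschitz---upgrades the subsequential convergence to convergence of the entire family. For the density statement the $\varepsilon$-oscillation hypothesis prevents mass from escaping to $|k|=\infty$, i.e.\ it makes $\{\we(t)\}$ tight in $k$, so that $\int \we\,\rmd k\to\int W^0\,\rmd k$ with no loss and hence $\rho^\varepsilon\to\rho^0$. I expect the main obstacle to be the singular $1/\varepsilon$ prefactor: one must verify rigorously, tested against symbols and uniformly on $[0,T]$, that the $\varepsilon$-Moyal bracket collapses to the Poisson bracket while both the $k$-growth of $P^\varepsilon$ from~(i) and the $o(\varepsilon)$ remainder in~(iii) stay controlled---this is exactly where the three hypotheses must be used in concert, and where the argument departs from the time-independent template of~\cite{GeMaMaPo:1997}.
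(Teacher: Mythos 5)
The first thing to note is that the paper does not actually prove this statement: it is quoted as a known result --- a ``modification of Theorem 6.1 in~\cite{GeMaMaPo:1997}'' --- and is then used as a black box in the proof of \cref{prop:limit_wigner_m}, where all the work consists of computing the Weyl symbol of the VMSE operator and checking hypotheses (i)--(iii). So there is no paper proof to compare yours against; what you have written is a reconstruction of the argument in the cited literature. Structurally, your reconstruction is the standard and correct one: uniform $L^2$ bounds giving weak-$*$ compactness of the Wigner transforms in the dual of a test-symbol algebra, the Moyal-bracket expansion collapsing to the Poisson bracket with an $\mathcal{O}(\varepsilon^2)$ remainder (the parity/antisymmetry cancellation you invoke is exactly right for Weyl quantization), the disappearance of the $\varepsilon Q^0$ and $o(\varepsilon)$ contributions in the limit, and uniqueness of the limiting Liouville equation upgrading subsequential to full convergence.

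Two technical points in your sketch would, however, need repair. First, Stone's theorem does not apply as you state it: the symbol $P^{\varepsilon}(t,x,k)$ is time dependent, so $(P^{\varepsilon})^{\mathrm{W}}$ generates a two-parameter unitary propagator $U^{\varepsilon}(t,s)$ rather than a unitary group, and the existence of that propagator requires the theory of evolution systems (Kato~\cite{Ka:1985}, which the paper invokes in \cref{rem:wellpose} for precisely this reason). The $L^2$ conservation you need survives, but its justification is different, and accommodating the time dependence is one of the two ``modifications'' that distinguish this statement from the time-independent template of~\cite{GeMaMaPo:1997}. Second, your uniqueness step is incomplete: hypothesis (i) makes the Hamiltonian vector field of $P^0$ only locally Lipschitz, which gives local-in-time trajectories, whereas the push-forward representation (and hence uniqueness of the measure-valued solution on all of $[0,T]$) requires the flow to be complete, i.e.\ no finite-time escape of $|k|$ to infinity. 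For $\sigma>1$ this is not automatic from the symbol bounds --- in the application it follows from conservation of $P^0=\frac{1}{2}m_0|k|^2$ along trajectories together with the ellipticity $m_0\geq C>0$, and in the general statement it must be assumed or derived separately. A smaller omission: for the density convergence, \cite{GeMaMaPo:1997} needs compactness at infinity in $x$ in addition to $\varepsilon$-oscillation; your tightness-in-$k$ explanation captures the latter but not the former (though the paper's own statement of the theorem elides this point as well).
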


Our theorem for VMSE is a direct corollary of the theorem above, applied on the equation with varying mass:
\begin{theorem}\label{prop:limit_wigner_m}
Suppose the mass $m_0$ satisfies~\cref{eqn:m0_H1} and elliptic condition~\cref{eqn:m0_H2}, then the Wigner transform $\we(t)$ of $\ue(t)$, the solution to VMSE~\cref{eqn:effect_mass_schr_m} with initial condition $\ue_\mathrm{I}\in \mathcal{S}(\mathbb{R}^d)$, converges uniformly on $[0,T]$ (in weak-$*$ sense) to the measure $W^0(t)\in\mathcal{S}'(\mathbb{R}^d)$ that solves:
\begin{equation}\label{eqn:liouville_thm}
\begin{aligned}
&\partial_t W^0(t,x,k) + m_0(t,x)k\cdot\nabla_x W^0(t,x,k) - \frac{|k|^2}{2}\nabla_x m_0(t,x) \cdot \nabla_k W^0(t,x,k) = 0\,, \\
&W^0(0,x,k) = W_\mathrm{I}^0(x,k)\,,
\end{aligned}
\end{equation}
with the initial data $W_\mathrm{I}^0$ being the (weak-$*$) limit of Wigner transform of $\ue_\mathrm{I}$.
\end{theorem}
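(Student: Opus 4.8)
The plan is to recognize \cref{prop:limit_wigner_m} as a direct instance of \cref{thm:gerard} and verify its three hypotheses for the symbol attached to VMSE. First I would recast \cref{eqn:effect_mass_schr_m} in the required Cauchy form $\ri\varepsilon\partial_t\ue - (P^\varepsilon)^{\mathrm{W}}(t,x,\varepsilon D_x)\ue = 0$ by reading off the Weyl operator $(P^\varepsilon)^{\mathrm{W}} = -\tfrac12\varepsilon^2\nabla_x\cdot(m_0\nabla_x)$, which follows from rearranging $\ri\varepsilon\partial_t\ue = -\tfrac12\varepsilon^2\nabla_x\cdot(m_0\nabla_x\ue)$. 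The core computation is then the semiclassical Weyl symbol of this divergence-form operator. Carrying out the Moyal expansion (or a direct change of variables in the quantization integral) gives
\begin{equation}\label{eqn:pf_symbol}
P^\varepsilon(t,x,k) = \tfrac12 m_0(t,x)|k|^2 + \tfrac{\varepsilon^2}{8}\Delta_x m_0(t,x)\,.
\end{equation}
The key structural point is that the first-order-in-$\varepsilon$ correction vanishes: the divergence form makes the operator formally self-adjoint, so its Weyl symbol is real, and the would-be order-$\varepsilon$ contribution (an imaginary multiple of $(\nabla_x m_0)\cdot k$ present in the Kohn--Nirenberg symbol) cancels, leaving a purely $O(\varepsilon^2)$ remainder.

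With \cref{eqn:pf_symbol} in hand, hypotheses (i)--(iii) follow from \cref{eqn:m0_H1}--\cref{eqn:m0_H2}. For (i) I would take $\sigma=2$: since $\partial_{k}^\beta|k|^2$ has degree $2-\beta$ (and vanishes for $\beta\ge 3$) while every $x$-derivative lands on $m_0$ or $\Delta_x m_0$, which are bounded with all their derivatives, the bound $|\partial_{x_n}^\alpha\partial_{k_m}^\beta P^\varepsilon|\le C_{\alpha,\beta}(1+|k|)^{2-\beta}$ holds uniformly for $\varepsilon\in(0,\varepsilon_0]$; the $\tfrac{\varepsilon^2}{8}\Delta_x m_0$ term is uniformly bounded and $k$-independent and so does not spoil the estimate. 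For (ii), essential self-adjointness of $-\tfrac12\varepsilon^2\nabla_x\cdot(m_0\nabla_x)$ on $L^2(\mathbb{R}^d)$ is the standard statement for a uniformly elliptic second-order operator in divergence form with smooth, bounded coefficients, guaranteed by \cref{eqn:m0_H1} and the ellipticity \cref{eqn:m0_H2}; this is the same regularity input underlying the well-posedness recorded in \cref{rem:wellpose}. For (iii), \cref{eqn:pf_symbol} gives exactly $P^\varepsilon = P^0 + \varepsilon\cdot 0 + O(\varepsilon^2)$ with $P^0(t,x,k) = \tfrac12 m_0(t,x)|k|^2$ and $Q^0\equiv 0$, and the remainder $\tfrac{\varepsilon^2}{8}\Delta_x m_0$ is real, smooth and bounded, hence converges to $0$ in $C([0,T];C^\infty_{\mathrm{loc}}(\mathbb{R}^{2d}))$.

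Then I would invoke \cref{thm:gerard} directly. Its limiting equation reads $\partial_t W^0 + \nabla_k P^0\cdot\nabla_x W^0 - \nabla_x P^0\cdot\nabla_k W^0 = 0$, and substituting $\nabla_k P^0 = m_0(t,x)k$ together with $\nabla_x P^0 = \tfrac12|k|^2\nabla_x m_0(t,x)$ reproduces precisely \cref{eqn:liouville_thm}, with $W^0(0)=W^0_\mathrm{I}$ the weak-$*$ limit of the Wigner transform of $\ue_\mathrm{I}$. The convergence of $\we(t)$ to $W^0(t)$ is then the weak-$*$, uniform-on-$[0,T]$ statement provided by the theorem.

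I expect the main obstacle to be the symbol computation \cref{eqn:pf_symbol} and, in particular, the verification that the subprincipal symbol $Q^0$ vanishes: this is exactly what aligns the rigorous result with the formal $O(\varepsilon^2)$ truncation in \cref{eqn:limiteqn_m}, and it is where the divergence-form (self-adjoint) structure of the varying-mass term is essential, since a generic first-order term would instead produce a nonzero $Q^0$ and an extra drift in the limit. A secondary point to handle with care is the bookkeeping between the Wigner momentum variable $k$ and the semiclassical rescaling $\varepsilon k$ in the quantization, and the fact that $m_0$ is time-dependent, so that $P^0=P^0(t,x,k)$; this is permitted by the hypotheses of \cref{thm:gerard} and is what makes the coefficient $m_0(t,x)$ appear in \cref{eqn:liouville_thm}.
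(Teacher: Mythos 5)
Your proposal is correct and follows essentially the same route as the paper: both reduce the statement to \cref{thm:gerard}, compute the Weyl symbol $P^\varepsilon = \tfrac12 m_0|k|^2 + \tfrac{\varepsilon^2}{8}\Delta_x m_0$ (the paper via the left symbol $\tfrac12 m_0|k|^2 - \tfrac{\ri\varepsilon}{2}k\cdot\nabla_x m_0$ and the change-of-quantization formula, which is exactly the cancellation of the imaginary first-order term you describe), and conclude with $P^0 = \tfrac12 m_0|k|^2$ and $Q^0 = 0$. Your explicit verification of hypotheses (i)--(iii) is slightly more detailed than the paper's, which merely asserts they hold under \cref{eqn:m0_H1}--\cref{eqn:m0_H2}, but the argument is the same.
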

\begin{proof}
This is a direct corollary of the previous theorem. To prove it amounts to deriving the symbol for the equation and justifying the assumptions on the symbols. To derive the symbol, we first compare VSME with
\begin{equation}\label{eqn:VMSE_pseudo}
\ri\varepsilon \partial_t \ue - (P^{\varepsilon})^\mathrm{W}(t,x,\varepsilon D_x)\ue = 0\,.
\end{equation}
Then
\begin{equation}
\begin{aligned}
((P^{\varepsilon})^\mathrm{W}(t,x,\varepsilon D_x)\ue)(x) &= -\frac{1}{2}\varepsilon^2 \nabla_x\cdot( m_0(t,x) \nabla_x \ue(t,x))\\
&= -\frac{1}{2}\varepsilon^2 \nabla_x m_0(t,x)\cdot \nabla_x \ue(t,x) - \frac{1}{2}\varepsilon^2 m_0(t,x) \Delta_x \ue(t,x)\,.
\end{aligned}
\end{equation}
Recall first the connection between the left symbol and differential operator:
\[
(P_\mathrm{l}^{\varepsilon}(t,x,\varepsilon D_x)\ue)(x) = \frac{1}{(2\pi)^d}\int_{\mathbb{R}^{2d}}P_\mathrm{l}^{\varepsilon}\left(t,x,\varepsilon k \right) f(y) e^{\ri (x-y)k}\rmd y \rmd k\,,
\]
we have the left symbol for $(P^{\varepsilon})^\mathrm{W}(t,x,\varepsilon D_x)$ to be
\begin{equation}
P_\mathrm{l}^{\varepsilon}(t,x,k) = -\frac{\varepsilon}{2} \ri k\cdot \nabla_x m_0(t,x)  + \frac{1}{2} m_0(t,x) |k|^2\,.
\end{equation}
We then recall the change of quantization formula~\cite{Ma:2002} that connects the left symbol and the Weyl symbol:
\begin{equation}
P^{\varepsilon}(t,x,k) \sim \sum_{\alpha\in\mathbb{N}^d}\frac{(-1)^{|\alpha|}\varepsilon^{|\alpha|}}{\ri^{|\alpha|}\alpha!}\partial_k^{\alpha}\partial_{\theta}^{\alpha}P_\mathrm{l}^{\varepsilon}\left(t,x+\frac{1}{2}\theta,k\middle)\right|_{\theta=0}\,,
\end{equation}
to finally obtain:
\begin{equation}
P^{\varepsilon}(t,x,k) = \frac{1}{2}m_0(t,x)|k|^2 + \frac{1}{8}\varepsilon^2 \Delta_x m_0(t,x)\,.
\end{equation}
This symbol apparently satisfies the three assumptions in~\cref{thm:gerard} given the condition of $m_0$ in~\cref{eqn:m0_H1}, with $P^0(t,x,k) = \frac{1}{2}m_0(t,x)|k|^2$. This concludes the proof.
\end{proof}

\begin{remark}
The results in~\cite{GeMaMaPo:1997} is vastly general, and its application in our setting can also be extended greatly. Indeed, the derivation for the VMSE with a potential term is also straightforward. Let the VMSE be:
\begin{equation}\label{eqn:VMSE_potential}
\ri\varepsilon \partial_t \ue(t,x) = - \frac{1}{2}\varepsilon^2 \nabla_x\cdot( m_0(t,x) \nabla_x \ue(t,x)) + V(t,x)\ue(t,x)\,,\quad \ue(0,x) = \ue_{\mathrm{I}}(x)\,.
\end{equation}
then the Weyl symbol, according to our derivation is then given by
\begin{equation}
P^{\varepsilon}(x,k) = \frac{1}{2}m_0(t,x)|k|^2 + \frac{1}{8}\varepsilon^2 \Delta_x m_0(t,x) + V(t,x)\,,
\end{equation}
with $P^0(t,x,k) = \frac{1}{2}m_0(t,x)|k|^2 + V(t,x)$. The $P^0$ is indeed the Hamiltonian in kinetic limit. With smoothness condition of both $m_0$ and $V$, the asymptotic limit becomes:
\begin{equation}\label{eqn:liouville_potential}
\partial_t W^0 + m_0k\cdot \nabla_x W^0 -\frac{|k|^2}{2}\nabla_xm_0\cdot\nabla_kW^0 - \nabla_xV\cdot\nabla_kW^0=0\,.
\end{equation}
\end{remark}

\section{Semi-classical limit for VMSE with random perturbation}\label{sec:Schr_ran}

We consider the VMSE where the effective mass involves random perturbation, namely:
\begin{equation}\label{eqn:randpwe}
\ri\varepsilon \partial_t \ue + \frac{1}{2}\varepsilon^2 \nabla_x\cdot( m^{\varepsilon}(t,x)\nabla_x \ue) = 0,
\end{equation}
where the effective mass is
\begin{equation}\label{eqn:mass_rand}
m^{\varepsilon}(t,x) = m_0(t,x) + \sqrt{\varepsilon}m_1(\teps,\xeps)\,.
\end{equation}
While the leading order $m_0$ is assumed to be deterministic and smooth, we  allow the random perturbation $m_1(t,x)$ to present small scales at $\varepsilon$. While the scale for the perturbation is at the order of $\sqrt{\epsilon}$, the oscillation is at the order of $\epsilon$ for both $t$ and $x$. Furthermore we assume $m_1$ is mean-zero and stationary in both $t$ and $x$ with the correlation function $R(t,x)$:
\begin{equation}\label{eqn:correlation}
R(t,x) = \mathbb{E}[m_1(s,z)m_1(t+s,x+z)] \quad \forall x,z\in\bbr^d \text{ and }t,s\in\bbr\,.
\end{equation}

Taking the Fourier transform of the function in both time and space, one has:
\begin{equation}
\hat{R}(\omega,p) = \int_{\bbr^{d+1}} e^{-\ri\omega s - \ri p z}R(s,z)\rmd s \rmd z\,,
\end{equation}
then it is straightforward to show:
\begin{equation}
\mathbb{E}[\frmrandalter \hat{m}_1(\omega,q)] = (2\pi)^d e^{-\ri\omega\tau}\hat{R}(\omega,p) \delta(p+q),
\end{equation}
and
\[
\hat{R}(-\omega,p) = \hat{R}(\omega,p)\,,\quad\text{and}\quad \hat{R}(\omega,-p) = \hat{R}(\omega,p)\,.
\]

We dedicate this section to the derivation of the classical limit of the equation above. We will show that
\begin{Result}\label{thm:W_rand}
In the zero limit of $\varepsilon$, the Wigner transform of $\ue(t)$, which is the solution to the VMSE~\cref{eqn:randpwe} with varying random mass~\cref{eqn:mass_rand}, solves the radiative transfer equation:
\begin{multline}\label{eqn:RTE}
\partial_t W^{0} + m_0k\cdot\nabla_x W^{0} - \frac{k^2}{2}\nabla_x m_0 \cdot \nabla_k W^{0} \\
= \frac{1}{(2\pi)^d} \int_{\bbr^d} \frac{1}{4}(p\cdot k)^2 \hat{R}\left(\frac{m_0}{2}(p^2-k^2),p-k\right)[W^{0}(p)-W^{0}(k)]\mathrm{d}p\,.
\end{multline}
\end{Result}

\begin{proof}[Derivation]
In view of \cref{eqn:wignereqn} in~\cref{lem:W}, noting that $m_0\to m_0+\sqrt{\varepsilon}m_1$, the Wigner equation~\cref{eqn:randpwe} is transformed to
\begin{equation}\label{eqn:wignerrand}
\partial_t \we + \frac{1}{\varepsilon}\mathcal{Q}^{\varepsilon}_1 \we + \mathcal{Q}^{\varepsilon}_2 \we + \frac{1}{\sqrt{\varepsilon}}\mathcal{P}_1 \we + \sqrt{\varepsilon}\mathcal{P}_2 \we = \varepsilon\mathcal{Q}^{\varepsilon}_3 \we + \varepsilon^{3/2}\mathcal{P}_3 \we\,,
\end{equation}
where the operators $\mathcal{Q}^{\varepsilon}_i$'s are defined in \cref{eqn:q1}-\cref{eqn:q3}, and $\mathcal{P}_i$'s are their counterparts defined by $m_1$:

\begin{equation}
\begin{aligned}
\mathcal{P}_1 \we(t,x,k) =& \frac{|k|^2}{2}\int_{\bbr^d} \frac{e^{\ri p\xi}}{(2\pi)^d}\tilde{m}_1(\tau,p) \ri\left[\we\left(k- \half p\right)-\we\left(k+\half p\right)\right]\rmd p\\
&-\frac{1}{8}\int_{\bbr^d} \frac{e^{\ri p\xi}}{(2\pi)^d}\tilde{m}_1(\tau,p) \ri|p|^2\left[\we\left(k-\half p\right)-\we\left(k+\half p\right)\right]\rmd p\,,
\end{aligned}
\end{equation}
\begin{equation}
\mathcal{P}_2 \we(t,x,k) = \frac{k}{2}\cdot\int_{\bbr^d} \frac{e^{\ri p\xi}}{(2\pi)^d}\tilde{m}_1(\tau,p) \left[\nabla_x\we\left(k- \half p\right)+\nabla_x\we\left(k+ \half p\right)\right]\rmd p\,,
\end{equation}
and
\begin{equation}
\begin{aligned}
\mathcal{P}_3 \we(t,x,k) =& \frac{1}{8}\int_{\bbr^d} \frac{e^{\ri p\xi}}{(2\pi)^d}\tilde{m}_1(\tau,p) \ri\left[\Delta_x\we\left(k-\half p\right)-\Delta_x\we\left(k+\half p\right)\right]\rmd p\,,
\end{aligned}
\end{equation}
where we use the fast variables
\[
\tau = \frac{t}{\varepsilon}\,,\quad\xi = \frac{x}{\varepsilon}\,.
\]
Explicitly spelling out the fast variables in $\we$, one has:
\[
\we(t,x,k)\to \we(t,\tau,x,\xi,k)\,,\quad \nabla_x\to\nabla_x+\frac{1}{\varepsilon}\nabla_\xi\,,\quad \partial_t\to\partial_t+\frac{1}{\varepsilon}\partial_\tau\,,
\]
and thus the leading orders in~\cref{eqn:wignerrand} become:
\begin{equation*}
\begin{aligned}
\frac{1}{\varepsilon}\mathcal{Q}^{\varepsilon}_1 \we &= -\frac{|k|^2}{2} \nabla_x m_0 \cdot \nabla_k \we + \calo(\varepsilon^2),\\
\mathcal{Q}^{\varepsilon}_2 \we &= \frac{1}{\varepsilon}m_0 k\cdot\nabla_{\xi} \we + m_0 k\cdot\nabla_{x}\we + \calo(\varepsilon),\\
\varepsilon\mathcal{Q}^{\varepsilon}_3 \we &= -\frac{1}{8}\nabla_x m_0\cdot\nabla_k(\Delta_{\xi}\we) + \calo(\varepsilon),
\end{aligned}
\end{equation*}
and
\begin{equation*}
\begin{aligned}
\sqrt{\varepsilon}\mathcal{P}_2 \we =& \frac{1}{\sqrt{\varepsilon}}\mathcal{P}_2\left(\frac{\partial}{\partial \xi}\right) \we + \calo(\sqrt{\varepsilon}),\\
\varepsilon^{3/2}\mathcal{P}_3 \we =& \frac{1}{\sqrt{\varepsilon}}\mathcal{P}_3 \left(\frac{\partial}{\partial \xi}\right) \we + \calo(\sqrt{\varepsilon})\,.
\end{aligned}
\end{equation*}

To perform the asymptotic expansion of the equation, we first write the ansatz
\begin{equation}\label{eqn:multiscaleexpand}
\we = W^{(0)}+ \sqrt{\varepsilon} W^{(1)} + \varepsilon W^{(2)}+\cdots\,.
\end{equation}
By plugging the expansion above into~\cref{eqn:wignerrand}, we have, at the order of $\calo(1/\varepsilon)$:
\begin{equation}\label{eqn:eqnW0}
\partial_{\tau}W^{(0)} + m_0k\cdot\nabla_{\xi}W^{(0)}=0\,,
\end{equation}
which suggests $W^{(0)}$ having no dependence on $\tau$ and $\xi$, the fast variables. The next order is $\calo(1/\sqrt{\varepsilon})$, and the equation writes:
\begin{equation}\label{eqn:eqnW1}
\begin{aligned}
&\partial_{\tau}W^{(1)} + m_0k\cdot\nabla_{\xi}W^{(1)}\\
=& \frac{1}{\ri}\int_{\bbr^d} \frac{e^{\ri p\xi}}{(2\pi)^d}\frmrandalter \left(\frac{|k|^2}{2}-\frac{|p|^2}{8}\right)\left[W^{(0)}\left(k- \half p\right)-W^{(0)}\left(k+\half p\right)\right]\rmd p\,.
\end{aligned}
\end{equation}
Since the only $\tau$ dependence on the right hand side is in $\frmrandalter$, the equation can be solved explicitly using the Fourier transform:
\begin{equation}\label{eqn:W1}
\begin{aligned}
&\ri(2\pi)^{d+1}W^{(1)}(t,\tau,x,\xi,k)\\
&= \int \frac{e^{\ri p\xi+\ri\omega\tau}\stfrm(4|k|^2 - |p|^2)}{8(\ri\omega + \ri m_0k\cdot p + \theta)}\left[W^{(0)}\left(k- \frac{p}{2}\right)-W^{(0)}\left(k+ \frac{p}{2}\right)\right]\rmd p \rmd \omega\,,
\end{aligned}
\end{equation}
where $\theta$ is a regularization parameter, to be sent to $0$ in the end, and $\stfrm$ is the space-time Fourier transform of $m_1$:
\begin{equation}
\stfrm = \int_{\bbr} e^{-\ri\tau \omega} \frmrandalter \rmd \tau\,.
\end{equation}

The following order is $\calo(1)$ and is the order we use to close:
\begin{equation}\label{eqn:order1}
\begin{aligned}
&\partial_t W^{(0)} + m_0k\cdot\nabla_x W^{(0)} -\frac{k^2}{2}\nabla_x m_0 \cdot \nabla_k W^{(0)} \\
&+\partial_{\tau} W^{(2)} + m_0k\cdot\nabla_{\xi}W^{(2)}+ \mathcal{P}_1 W^{(1)} + \mathcal{P}_2\left( \frac{\partial}{\partial \xi}\right) W^{(1)}
= \mathcal{P}_3 \left( \frac{\partial}{\partial \xi}\right) W^{(1)}\,.
\end{aligned}
\end{equation}
Noticing
\begin{equation}
\mathbb{E}[\partial_{\tau} W^{(2)} + m_0k\cdot\nabla_{\xi}W^{(2)}] = 0\,,
\end{equation}
we eliminate the dependence on $W^{(2)}$ in the equation and arrive at:
\begin{equation}\label{eqn:W0_limit}
\begin{aligned}
&\partial_t W^{(0)} + m_0k\cdot\nabla_x W^{(0)} -\frac{k^2}{2}\nabla_x m_0 \cdot \nabla_k W^{(0)} \\
&= -\mathbb{E}[\mathcal{P}_1 W^{(1)}] - \mathbb{E}\left[\mathcal{P}_2\left( \frac{\partial}{\partial \xi}\right) W^{(1)}\right] + \mathbb{E}\left[\mathcal{P}_3\left( \frac{\partial}{\partial \xi}\right) W^{(1)}\right]\,.
\end{aligned}
\end{equation}

Getting the simplified version of the equation and showing the radiative transfer equation limit amounts to analyzing the terms on the right respectively. Since they are quite similar, we only present the calculation of terms corresponding to $\mathcal{P}_3$ below. It essentially comes from plugging in $W^{(1)}$ formula in~\cref{eqn:W1} into it. For example, $\mathbb{E}\left[\mathcal{P}_3\left( \frac{\partial}{\partial \xi}\right) W^{(1)}\right]$ can be simplified as:
\begin{equation}\label{eqn:W0_average_1}
\begin{aligned}
 &\quad \,\, \mathbb{E}\left[\mathcal{P}_3\left( \frac{\partial}{\partial \xi}\right) W^{(1)}\right] \\
&= \frac{1}{8}\mathbb{E}\left[\int_{\bbr^d} \frac{e^{\ri p\xi}}{(2\pi)^d}\frmrandalter \ri\left[\Delta_{\xi}W^{(1)}\left(k-\half p\right)-\Delta_{\xi}W^{(1)}\left(k+\half p\right)\right]\rmd p \right]\\
&=\frac{1}{8}\frac{1}{(2\pi)^{d+1}}\int_{\bbr^{d+1}} -|p-k|^2(p\cdot k) \hat{R}(\omega,p-k) \frac{\theta}{(\omega-\frac{m_0}{2}(p^2-k^2))^2+\theta^2}\\
& \qquad \qquad \qquad \qquad \qquad \qquad \qquad \qquad \qquad \qquad \qquad \qquad \quad [W^{(0)}(k)-W^{(0)}(p)]\rmd \omega \rmd p\\
&\xrightarrow{\theta\to 0} \frac{1}{(2\pi)^{d}}\int_{\bbr^{d+1}} -\frac{|p-k|^2}{8}\half(p\cdot k) \hat{R}\left(\frac{m_0}{2}(p^2-k^2),p-k\right)[W^{(0)}(k)-W^{(0)}(p)] \rmd p\,,
\end{aligned}
\end{equation}
where we used
\[
\lim_{\theta\to 0}\frac{\theta}{x^2+\theta^2} =\pi \delta(x)\,.
\]

Other terms in~\cref{eqn:W0_limit} can be similarly treated. The term corresponding to operator $\mathcal{P}_1$ becomes:
\begin{equation}
\begin{aligned}
&-\mathbb{E}[\mathcal{P}_1 W^{(1)}]\\
=&-\frac{|k|^2}{2}\mathbb{E}\bigg[\int_{\bbr^d} \frac{e^{\ri p\xi}}{(2\pi)^d}\frmrandalter \ri\left[W^{(1)}\left(k- \half p\right)-W^{(1)}\left(k+\half p\right)\right]\rmd p\bigg]\\
 &+\frac{1}{8}\mathbb{E}\left[\int_{\bbr^d} \frac{e^{\ri p\xi}}{(2\pi)^d}\frmrandalter \ri|p|^2\left[W^{(1)}\left(k-\half p\right)-W^{(1)}\left(k+\half p\right)\right]\rmd p\right]\\
=&-\frac{|k|^2}{2}\frac{1}{(2\pi)^{d}}\int_{\bbr^{d+1}} \half(p\cdot k) \hat{R}\left(\frac{m_0}{2}(p^2-k^2),p-k\right) [W^{(0)}(k)-W^{(0)}(p)] \rmd p\\
& + \frac{1}{8}\frac{1}{(2\pi)^{d}}\int_{\bbr^{d+1}} |p-k|^2\half(p\cdot k) \hat{R}\left(\frac{m_0}{2}(p^2-k^2),p-k\right) [W^{(0)}(k)-W^{(0)}(p)]\rmd p\,,
\end{aligned}
\end{equation}
and the term corresponding to operator $\mathcal{P}_2$ becomes:
\begin{equation}\label{eqn:W0_average_end}
\begin{aligned}
&- \mathbb{E}\left[\mathcal{P}_2\left( \frac{\partial}{\partial \xi}\right) W^{(1)}\right]\\
=&-\frac{k}{2}\cdot\mathbb{E}\left[\int_{\bbr^d} \frac{e^{\ri p\xi}}{(2\pi)^d}\frmrandalter \left[\nabla_{\xi}W^{(1)}\left(k- \half p\right)+\nabla_{\xi}W^{(1)}\left(k+ \half p\right)\right]\rmd p \right]\\
=& -\frac{k}{2}\cdot\frac{1}{(2\pi)^{d}}\int_{\bbr^{d+1}} (p-k)\half(p\cdot k) \hat{R}\left(\frac{m_0}{2}(p^2-k^2),p-k\right) [W^{(0)}(k)-W^{(0)}(p)] \rmd p\,.
\end{aligned}
\end{equation}
Inserting~\cref{eqn:W0_average_1}-\cref{eqn:W0_average_end} into~\cref{eqn:W0_limit} and simplify, we have the leading order asymptotic limit of~\cref{eqn:wignerrand}, concluding the proposition.
\end{proof}
\begin{remark}
With the formal derivation at hand,~\cref{thm:W_rand} can be potentially made rigorous with the perturbed test function technique as shown in~\cite{PoVa:2003,BaPaRy:2002}. This is, however, beyond the focus of this article. According to the conditions listed in~\cite{BaPaRy:2002} for the Schr\"odinger equation with random potential, to have the weak-$\ast$ convergence in $L^\infty([0,T];\mathcal{S}'(\mathbb{R}^{2d}))$ in our case here, it is expected that the Fourier transform of $m_1$ is a Markov process on the space of measures with bounded total variation and uniformly bounded support. Other conditions may be needed as well. We leave the rigorous justification to the future research.
\end{remark}

\section{Numerical result}\label{sec:numerics}
As a proof of concept, we provide some numerical evidences for~\cref{prop:limit_wigner_m} and~\cref{thm:W_rand}, the two results with $m^{\varepsilon}$ being completely deterministic and $m^{\varepsilon}$ having random fluctuations.

\subsection{Illustration of~\cref{prop:limit_wigner_m}}
We present numerical evidence for~\cref{prop:limit_wigner_m} in this subsection.
\subsubsection{Numerical setup}
According to the theorem, the Wigner transform of solution to VMSE satisfies, in the leading order, the Liouville equation.

To compare the wave functions of VMSE and its Wigner limit, we evaluate the following two macroscopic quantities:
\begin{equation}
\begin{aligned}
&\rho^0(t,x) = \int W^0(t,x,k)\rmd k\,, \quad \rho^{\varepsilon}(t,x) = |\ue(t,x)|^2\,, \\
&J^0(t,x) = m_0(t,x) \int k W^0(t,x,k)\rmd k\,, \quad J^{\varepsilon}(t,x) = \varepsilon\text{Im}\left(m_0(t,x)\overline{\ue(t,x)}\nabla_x\ue(t,x)\right)\,.
\end{aligned}
\end{equation}

As a computational setup, we set $\Omega = [0,L]\times[0,T]$, and choose the spatial mesh size $\Delta x = L / M$ with $M$ being an even
integer. The time step is denoted by $\Delta t$. The spatial and temporal grid points are denoted by $x_j = j\Delta x, j = 0,1,\cdots,M$, and $t_n = n\Delta t, n = 0,1,2,\cdots$. The initial data for Schr\"odinger equation has a Gaussian form:
\begin{equation}\label{eqn:initialSchr_deterministic}
\ue_I(x) = \exp\left(-A(x-x_0)^2 + \frac{\ri}{\varepsilon}p_0x\right)\,.
\end{equation}
The periodic boundary conditions are imposed
\begin{equation}
\ue(t,0) = \ue(t,L), \quad \partial_x\ue(t,0) = \partial_x\ue(t,L)\,.
\end{equation}
In computation we will set $L$ to be large enough and the periodic boundary condition plays minimum role.

Correspondingly, the transport equation~\cref{eqn:liouville_thm} has initial data:
\begin{equation} \label{eqn:initialRTE_deterministic}
W^0_I(x,k) = \exp(-2A(x-x_0)^2)\delta(k-p_0)\,.
\end{equation}

For Schr\"odinger equation with potential term~\cref{eqn:VMSE_potential}, we use standard Finite Difference method with the discretization resolved, namely $\Delta x = O(\varepsilon)$ and $\Delta t = o(\varepsilon)$. The Crank-Nicolson is applied in time, and spectral method is applied to treat spacial discretization~\cite{BaJiMa:02}, namely: let $U^{\varepsilon,n}_j$ be the approximation of $u^{\varepsilon}(x_j,t_n)$, then
\begin{equation}
\begin{aligned}
\frac{U^{\varepsilon,n+1}_j-U^{\varepsilon,n}_j}{\Delta t} = &\frac{\ri\varepsilon}{4} \big( D^\mathrm{s}_x(m_0^{n+1/2}D^\mathrm{s}_x U^{\varepsilon,n+1})|_{x_j} + D^\mathrm{s}_x(m_0^{n+1/2}D^\mathrm{s}_x U^{\varepsilon,n})|_{x_j}\big) \\
& - \frac{\ri}{\varepsilon} V_j^{n+1/2}\frac{U_j^{\varepsilon,n+1}+U_j^{\varepsilon,n}}{2}
\end{aligned}
\end{equation}
with
\[
U^{\varepsilon,n+1}_0 = U^{\varepsilon,n+1}_M,\quad U^{\varepsilon,n+1}_1 = U^{\varepsilon,n+1}_{M+1},\quad U^{\varepsilon,0}_j = u^{\varepsilon}_0(x_j),\quad \forall j\,.
\]
Here the super-index $n$ is for time, while the lower-index $j$ is for spatial grid point. We sample $M$ grid points in the domain $[0,L]$. The differential-operator $D^\mathrm{s}_x$ is computed through spectral method:
\begin{equation}
D^\mathrm{s}_x U|_{x=x_j} = \frac{1}{M}\sum_{l = -M/2}^{M/2-1} \ri\mu_l \hat{U}_l e^{\ri \mu_lx_j}\,,
\end{equation}
with
\begin{equation}
\hat{U}_l = \sum_{j = 0}^{M-1} U_j e^{-\ri\mu_l x_j},\quad l = -\frac{M}{2},\dots,\frac{M}{2}-1\,.
\end{equation}

To compute the deterministic Liouville equation~\cref{eqn:liouville_potential}, we use the particle method, that is to compute a large number of ODE systems:
\begin{equation}\label{eqn:hamilton_t}
\begin{cases}
\dot{x} = -km_0(T-t,x)\\
\dot{k} = \frac{|k|^2}{2}\partial_x m_0(T-t,x) + V(T-t,x)\,,
\end{cases}
0\leq t\leq T\,, x(0) = y, k(0) = p\,.
\end{equation}
The final solution is $W^0(T,y,p) = W_I(x(T),k(T))$. The equations~\cref{eqn:hamilton_t} for trajectory can be efficiently solved with typical ODE solvers. See also~\cite{ToEn:2004,ZaTo:2010} for the discussions of the regularized delta function.

\subsubsection{Numerical examples}
We have two examples below. In the first examples, we set $L=1.25$ and $T=0.5$. For the initial data~\cref{eqn:initialSchr_deterministic} and~\cref{eqn:initialRTE_deterministic}, we take $A = 2^7$, $p_0 = 1$ and $x_0 = 0.25$. In the first example, we set
\begin{equation}\label{eqn:m0_x_t}
m_0(t,x) = (1+0.2\sin(2\pi x))(1+0.2\cos(2\pi t))\,, \quad V_0(t,x) = 0\,.
\end{equation}
To compute Liouville equation, we set the spatial size $\Delta x = 2^{-10}$ and the frequency step $\Delta k =  2^{-10}$. The system~\cref{eqn:hamilton_t} are computed using MATLAB adaptive ODE solver with a prescribed error accuracy $10^{-8}$. To compute VMSE, we set $\varepsilon = 2^{-n}$ and we use the discretization:
\begin{equation}
\Delta t = 2^{-1.2n-3}, \quad \Delta x = 2^{-n-2}\,,
\end{equation}
that resolves the scales.

In~\cref{fig:mass_transport_det} we show the solution to the transport equation~\cref{eqn:liouville_thm} at different time snapshots. The results are presented both on the phase space, and on the physical domain, where we plot the density and the flux term. We then compare the Schr\"odinger equation solution and the limiting Liouville equation solution. In~\cref{fig:Liouville_vs_Schr}-\cref{fig:Liouville_vs_Schr_J} we present both the comparison of the density $\rho^0$ and $\rho^\varepsilon$ with different $\varepsilon$, and the comparison of the flux $J^0$ and $J^\varepsilon$ with different $\varepsilon$. The convergence rate is also shown in~\cref{fig:Liouville_conv} and~\cref{fig:Liouville_conv_J}, with the error quantified according to the following:
\begin{equation}
\text{Err}^{\varepsilon}_\rho = \int_{\bbr} |\rho^0 - \rho^{\varepsilon}| \rmd x\,, \quad
\text{Err}^{\varepsilon}_J = \int_{\bbr} |J^0 - J^{\varepsilon}| \rmd x
\end{equation}
where we recall:
\begin{equation*}
J^0(t,x) = m_0(t,x) \int k W^0(t,x,k)\rmd k\,, \quad
J^{\varepsilon}(t,x) = \varepsilon\text{Im}\left(m_0(t,x)\overline{\ue(t,x)}\nabla_x\ue(t,x)\right)\,.
\end{equation*}
According to the numerical solution, the errors decay at a rate of $O(\varepsilon^2)$.

\begin{figure}[tbhp]
  \centering
  \subfloat[$t = 0.05$]{
  \includegraphics[width=0.45\textwidth]{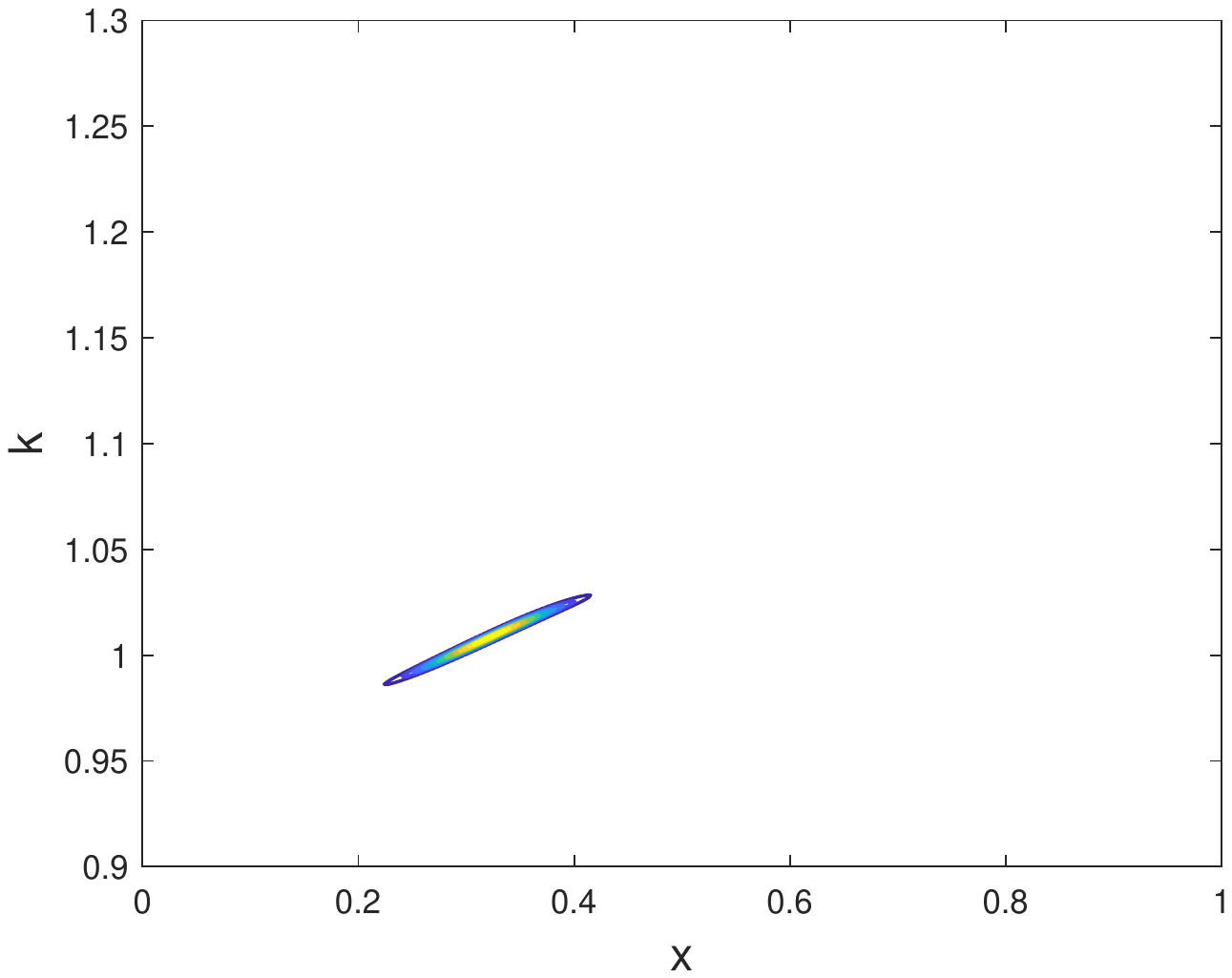}
  \includegraphics[width=0.45\textwidth]{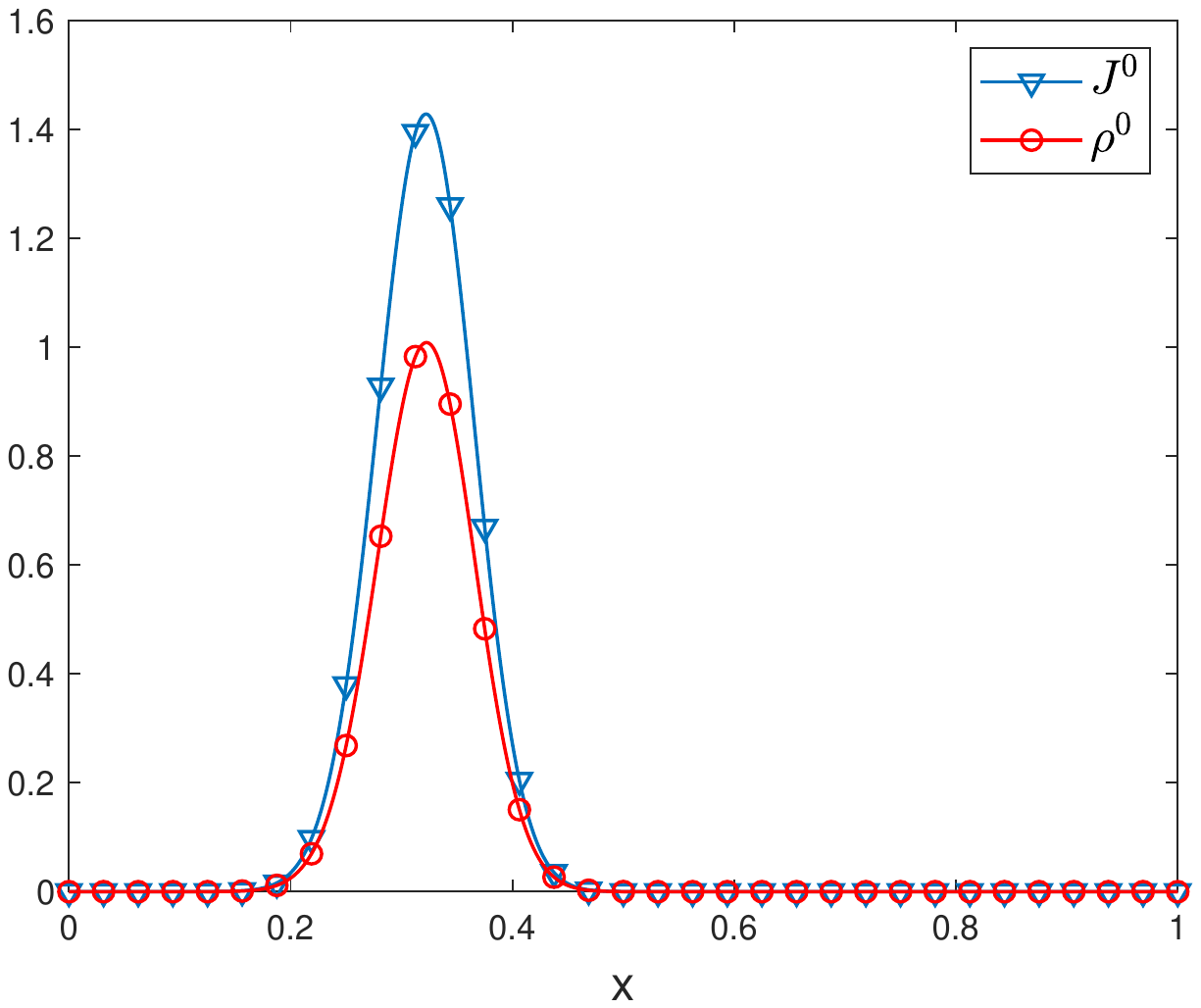}
  }

  \subfloat[$t = 0.15$]{
  \includegraphics[width=0.45\textwidth]{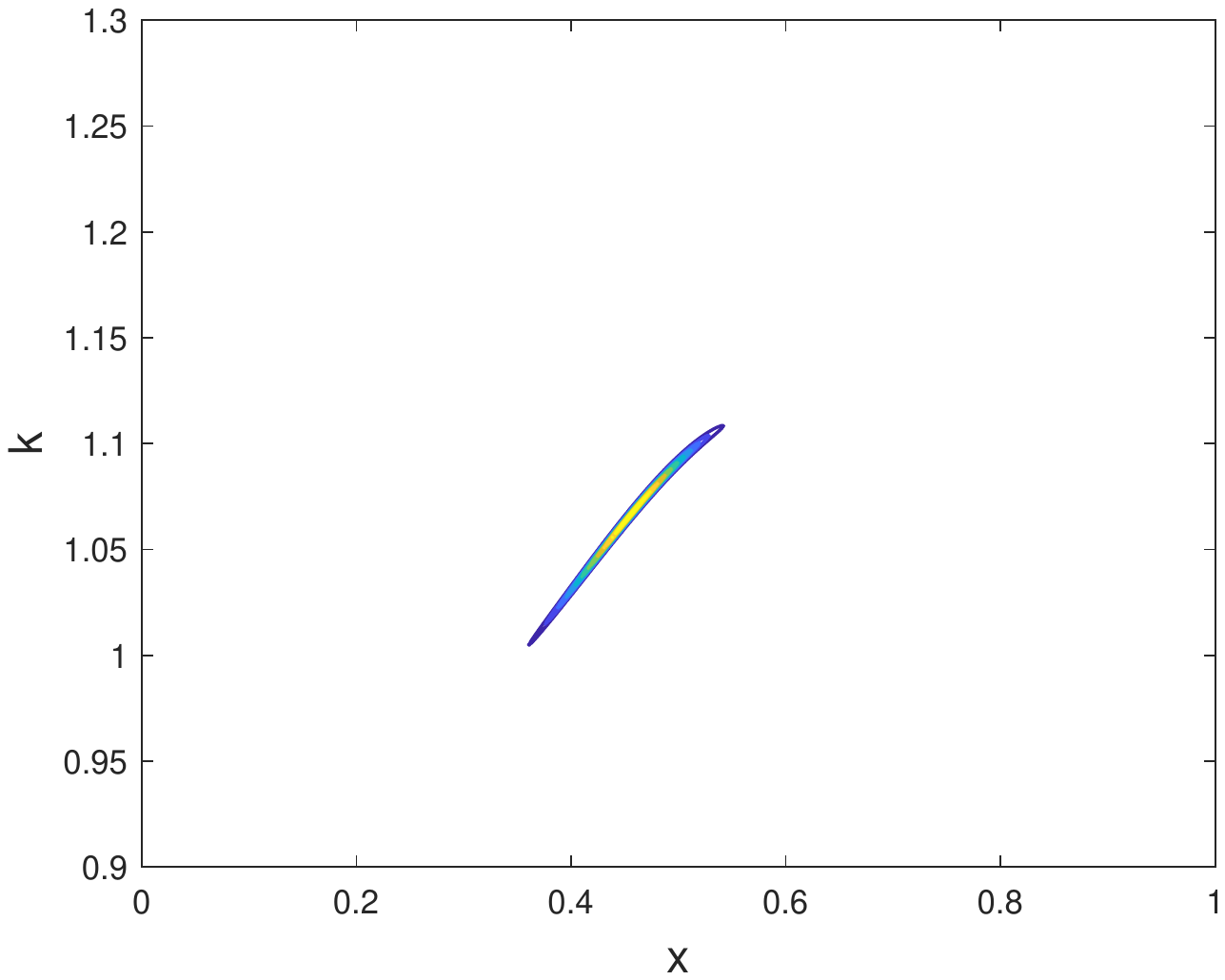}
  \includegraphics[width=0.45\textwidth]{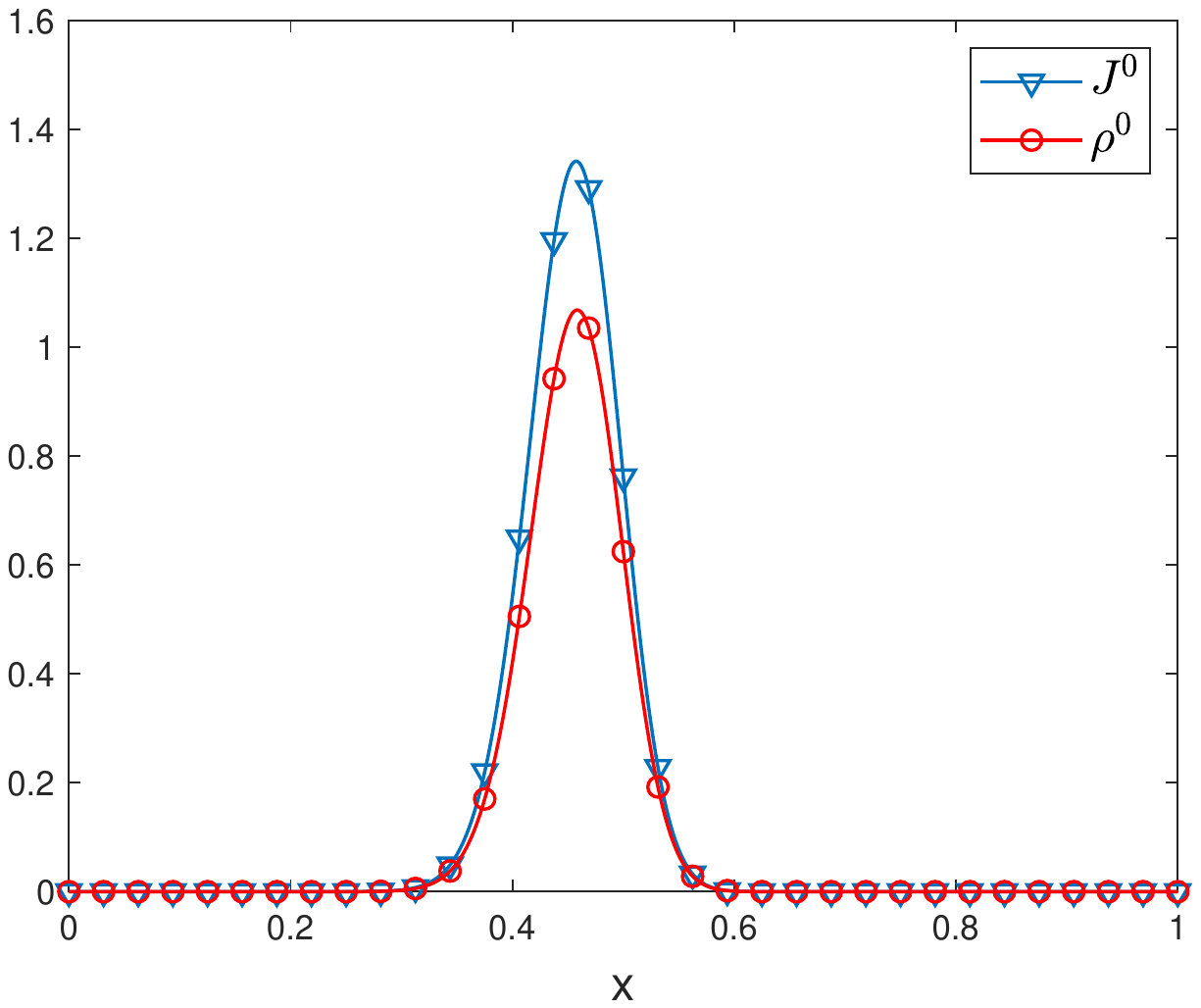}
  }

  \subfloat[$t = 0.4$]{
  \includegraphics[width=0.45\textwidth]{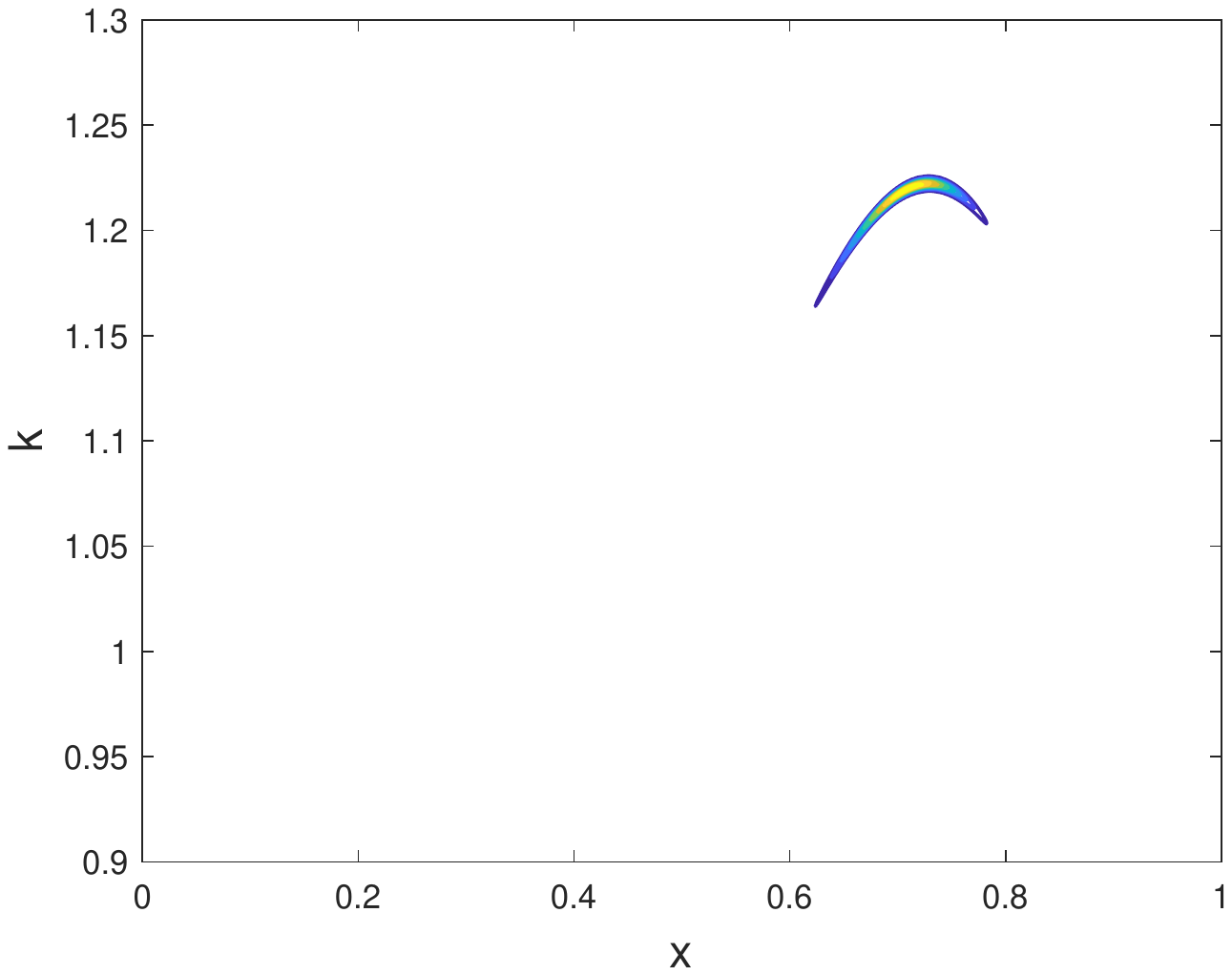}
  \includegraphics[width=0.45\textwidth]{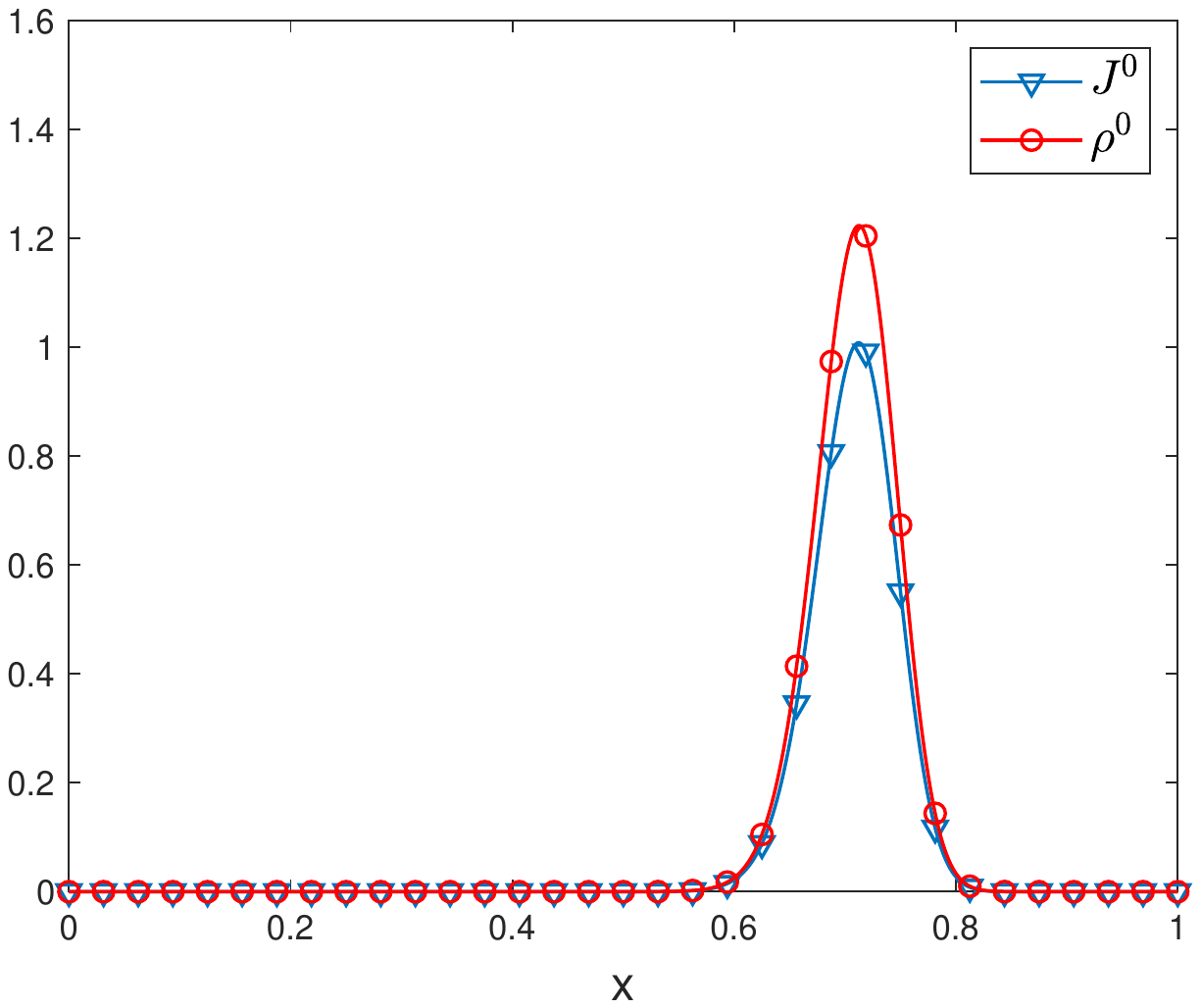}
  }
  \caption{The left column shows the contour of $W^0$ in phase space and the right column shows the particle density $\rho^0 = \int W^0 \rmd k$ and current density $J^0 = m_0 \int  k W^0 \rmd k$. The mass $m_0$~\cref{eqn:m0_x_t} is $t$-dependent.}
  \label{fig:mass_transport_det}
\end{figure}

\begin{figure}[tbhp]
  \centering
  \subfloat[]{\label{fig:Liouville_vs_Schr}\includegraphics[width=0.45\textwidth]{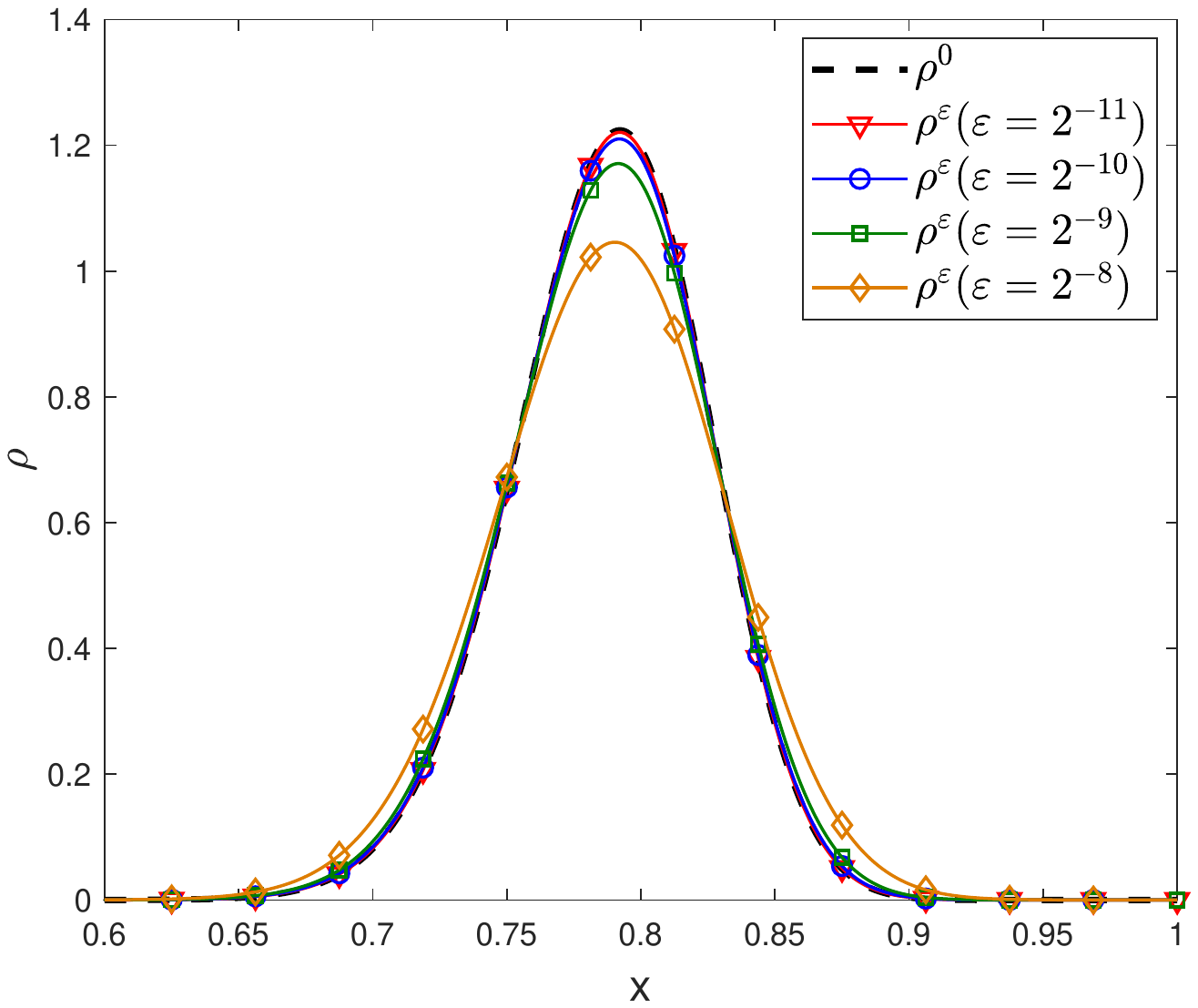}}
  \subfloat[]{\label{fig:Liouville_vs_Schr_J}\includegraphics[width=0.45\textwidth]{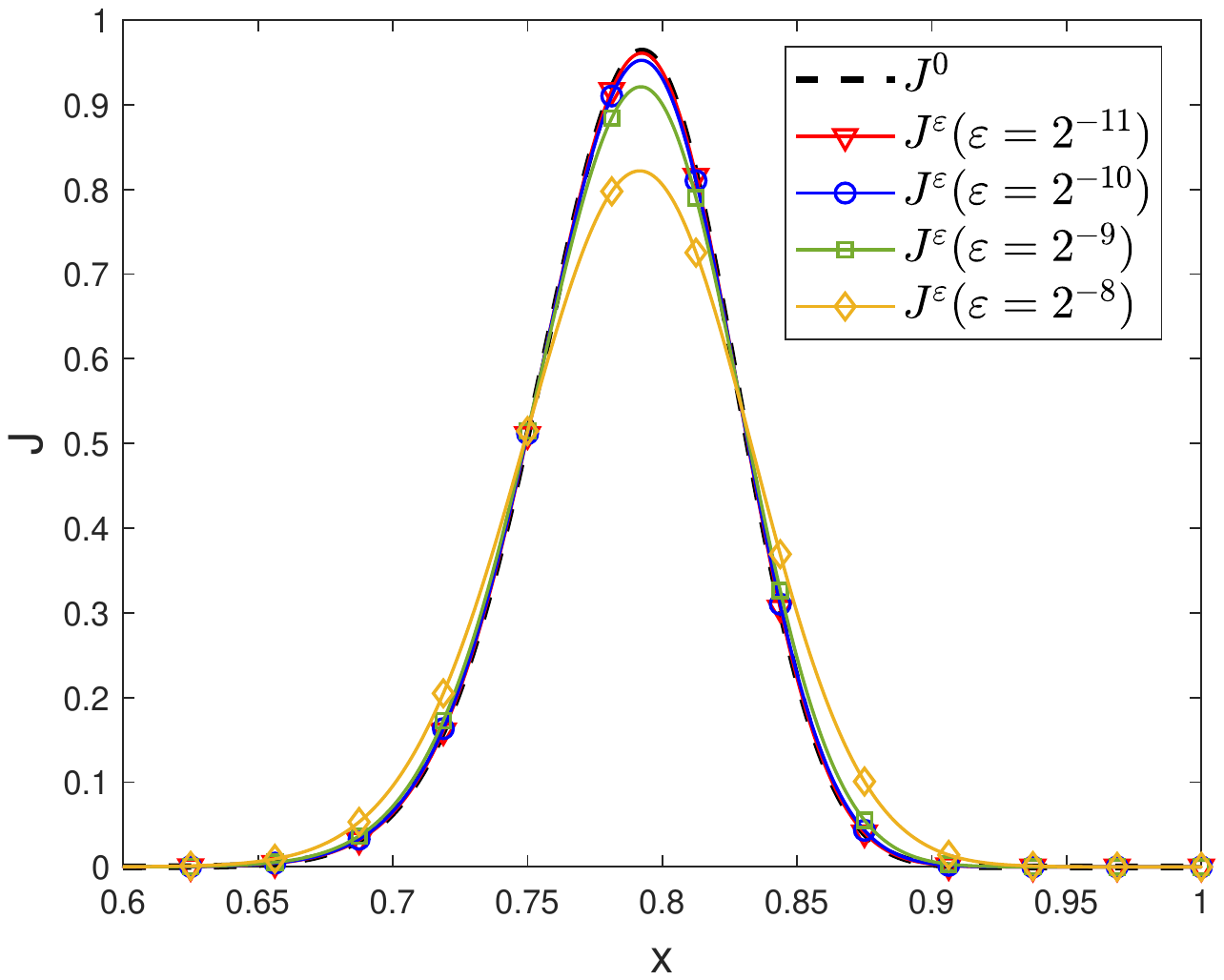}}

  \subfloat[]{\label{fig:Liouville_conv}\includegraphics[width=0.45\textwidth]{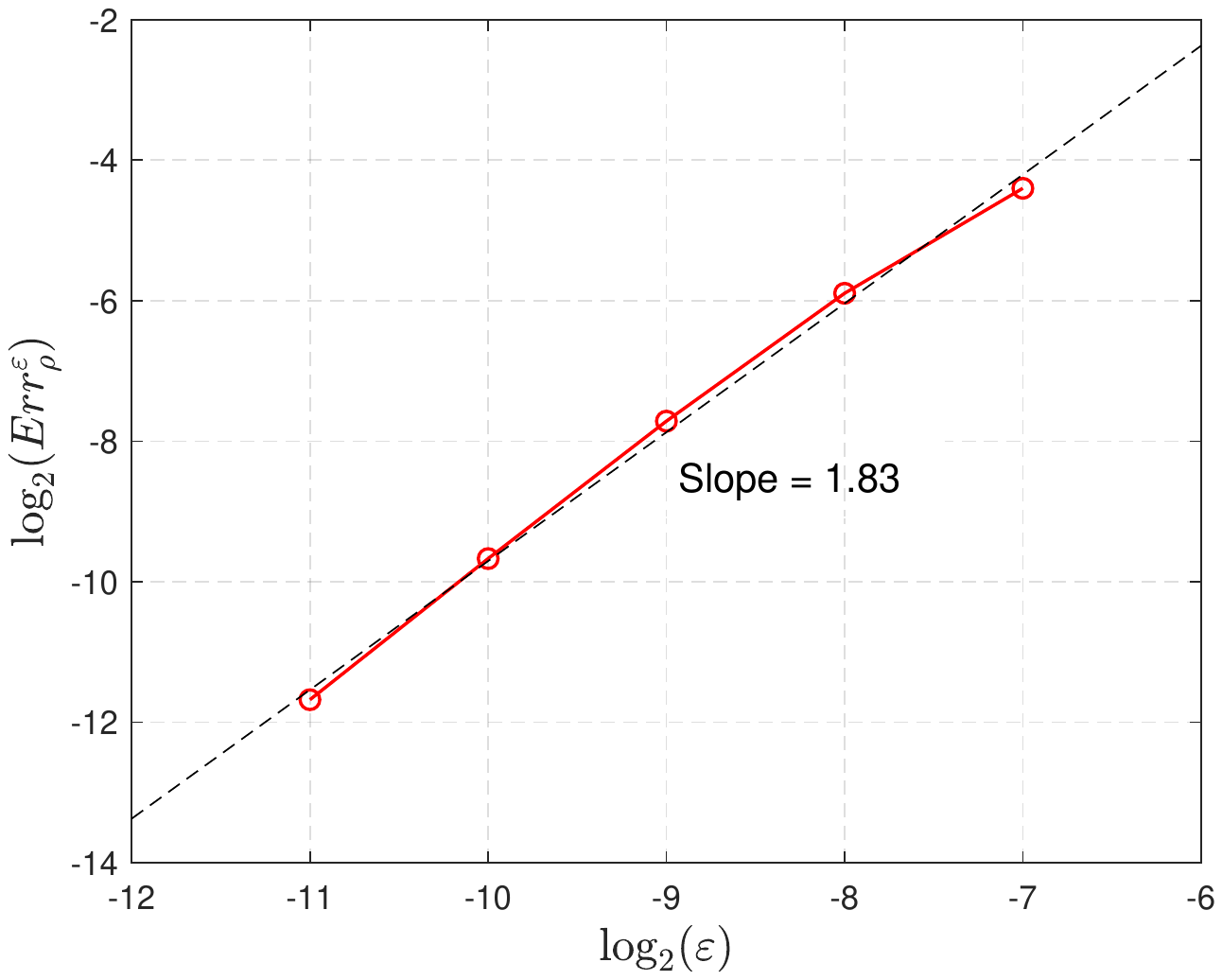}}
  \subfloat[]{\label{fig:Liouville_conv_J}\includegraphics[width=0.45\textwidth]{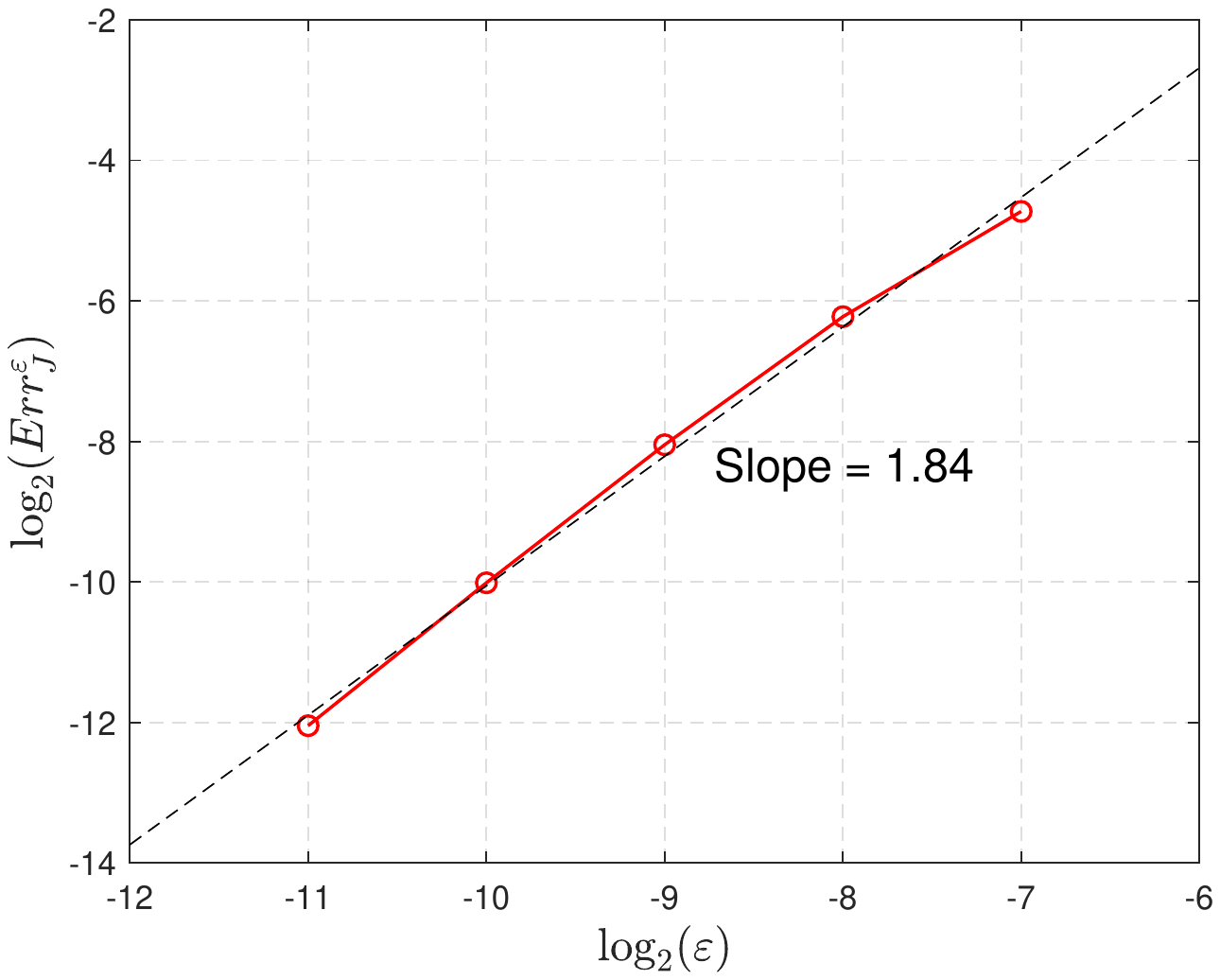}}
  \caption{ The plots (a)-(b) compare particle density $\rho^{\varepsilon}$ with $\rho^0$ and current density $J^{\varepsilon}$ with $J^0$ at $T = 0.5$ for different $\varepsilon$. The plots (c)-(d) show the errors $Err^{\varepsilon}_\rho$ and $Err^{\varepsilon}_J$ as a function of $\varepsilon$. The decay rate suggests that both errors are of $O(\varepsilon^2)$. The mass $m_0$~\cref{eqn:m0_x_t} is $t$-dependent and potential $V_0 = 0$.}
\end{figure}

In the second example, we consider a more practical setting. The effective mass and the external potential terms are selected to resemble a resonant tunneling diode, adopted from~\cite{ScSc:2020,ShCaTa:2006}, as are shown in~\cref{fig:setup_ex3}. We choose $L = 2$ and $T=0.5$. The effective mass $m_0$ is set to be
\begin{equation}\label{eqn:mass_ex3}
m_0(x) =
\begin{cases}
1 - 0.5\exp\left( 2^{-6}\left( \frac{4}{0.25^2} - \frac{1}{(0.75-x)(x-0.5)} \right) \right) \,, \quad 0.5<x<0.75 \\
1 - 0.5\exp\left( 2^{-6}\left( \frac{4}{0.25^2} - \frac{1}{(1.25-x)(x-1)} \right) \right) \,, \quad 1<x<1.25 \\
0 \,, \quad \mathrm{otherwise}\,.
\end{cases}
\end{equation}
and the potential $V_0$ is
\begin{equation}\label{eqn:potential_ex3}
V_0(x) =
\begin{cases}
\exp\left( 2^{-6}\left( \frac{4}{0.25^2} - \frac{1}{(0.75-x)(x-0.5)} \right) \right) \,, \quad 0.5<x<0.75 \\
\exp\left( 2^{-6}\left( \frac{4}{0.25^2} - \frac{1}{(1.25-x)(x-1)} \right) \right) \,, \quad 1<x<1.25 \\
0 \,, \quad \mathrm{otherwise}\,.
\end{cases}
\end{equation}
Note that in the original paper~\cite{ScSc:2020,ShCaTa:2006}, both terms are piece-wise constants. The discontinuity is beyond what we analyze in our paper and we smooth the transitions.

We use the same initial data as the first example, and set the spatial size $\Delta x = 2^{-10}$ and the frequency step $\Delta k =  2^{-10}$ in the computation of Liouville equation. We set the rescaled Planck constant $\varepsilon = 2^{-n}$ in VMSE and the discretization is chosen to be
\begin{equation}
\Delta t = 2^{-1.5n-3}, \quad \Delta x = 2^{-n-2}\,,
\end{equation}
that resolves the scales. In~\cref{fig:contour_ex3} we show the solution to the transport equation~\cref{eqn:liouville_potential}. In~\cref{fig:rho_ex3} we compare $\rho^0$ and $\rho^\varepsilon$ with different $\varepsilon$, and in~\cref{fig:J_ex3} we compare $J^0$ and $J^\varepsilon$. In~\cref{fig:err_rho_ex3} and~\cref{fig:err_J_ex3}, we show the convergence of $\text{Err}^{\varepsilon}_\rho$ and $\text{Err}^{\varepsilon}_J$ as a function of $\varepsilon$. According to the plot, the errors still decay at a rate of $O(\varepsilon^2)$.

\begin{figure}[tbhp]
  \centering
  \subfloat[]{\label{fig:setup_ex3}\includegraphics[width=0.45\textwidth]{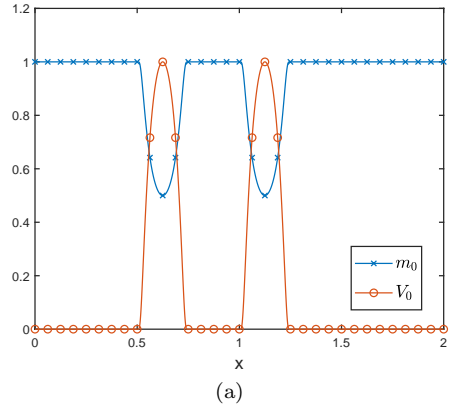}}
  \caption{The diagram of the potential $V_0(x)$~\cref{eqn:potential_ex3} and the effective mass $m_0(x)$~\cref{eqn:mass_ex3} of the resonant tunneling diode.}
\end{figure}

\begin{figure}[tbhp]
  \centering
  \subfloat[$t = 0.05$]{
  \includegraphics[width=0.45\textwidth]{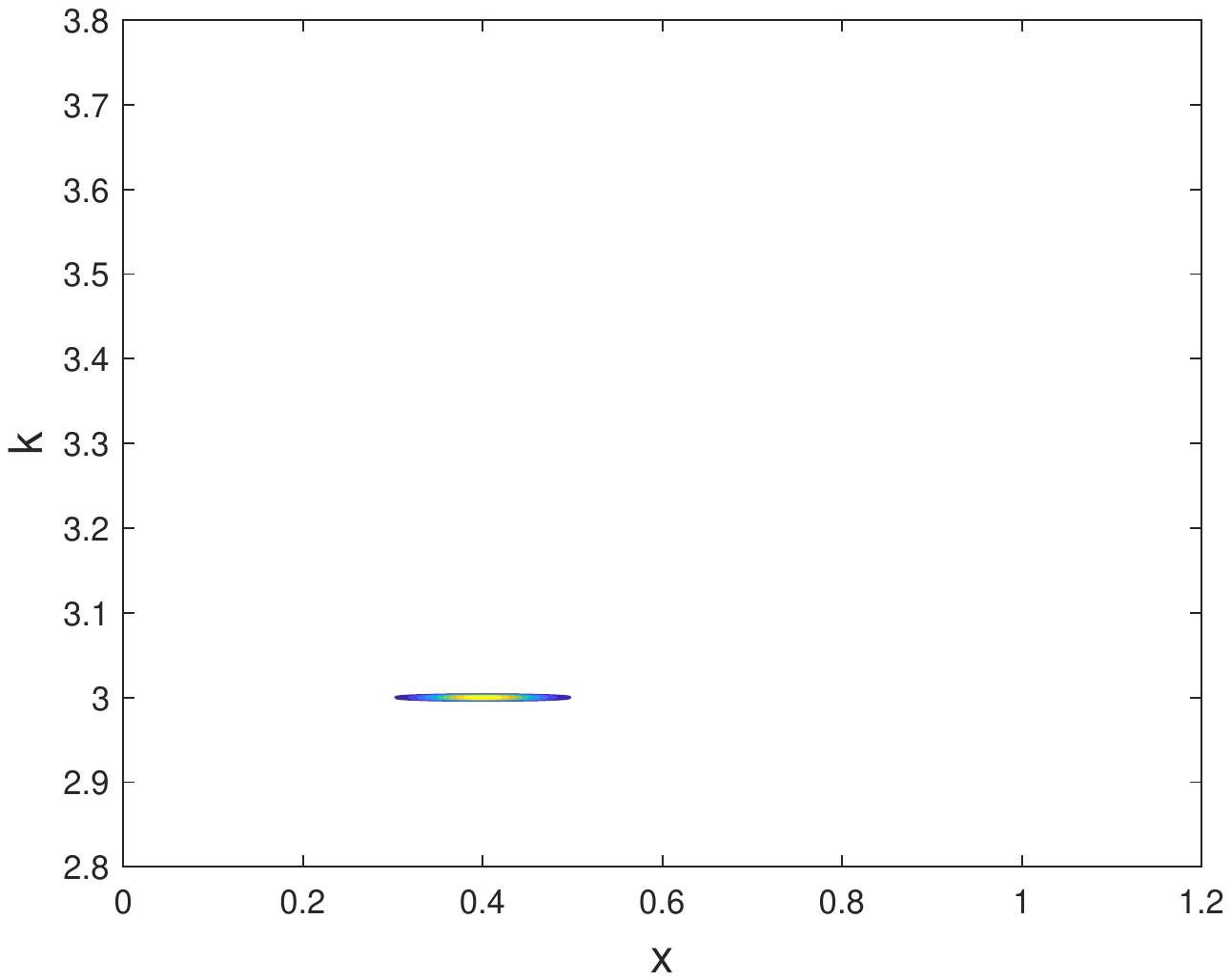}
  \includegraphics[width=0.45\textwidth]{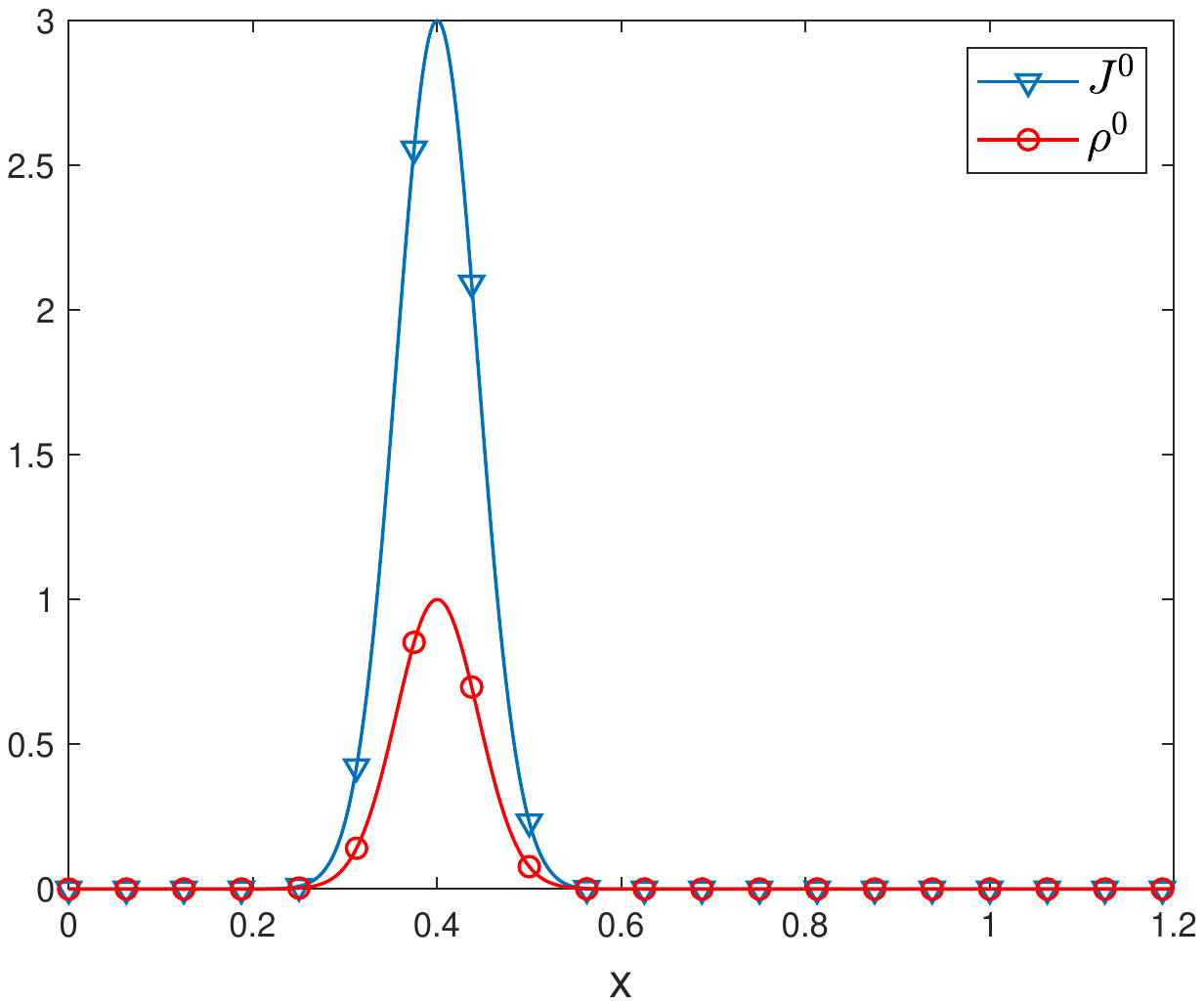}
  }

  \subfloat[$t = 0.15$]{
  \includegraphics[width=0.45\textwidth]{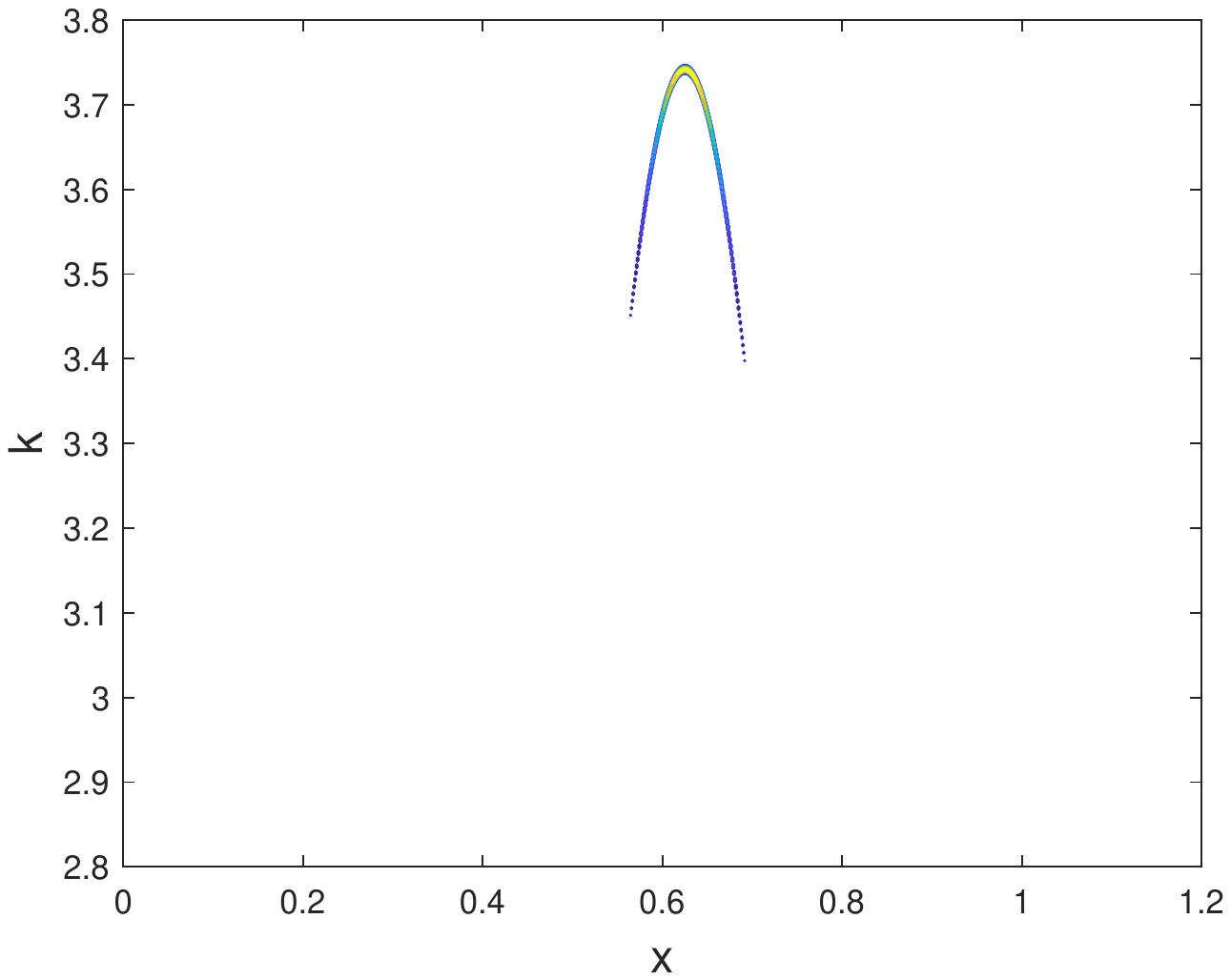}
  \includegraphics[width=0.45\textwidth]{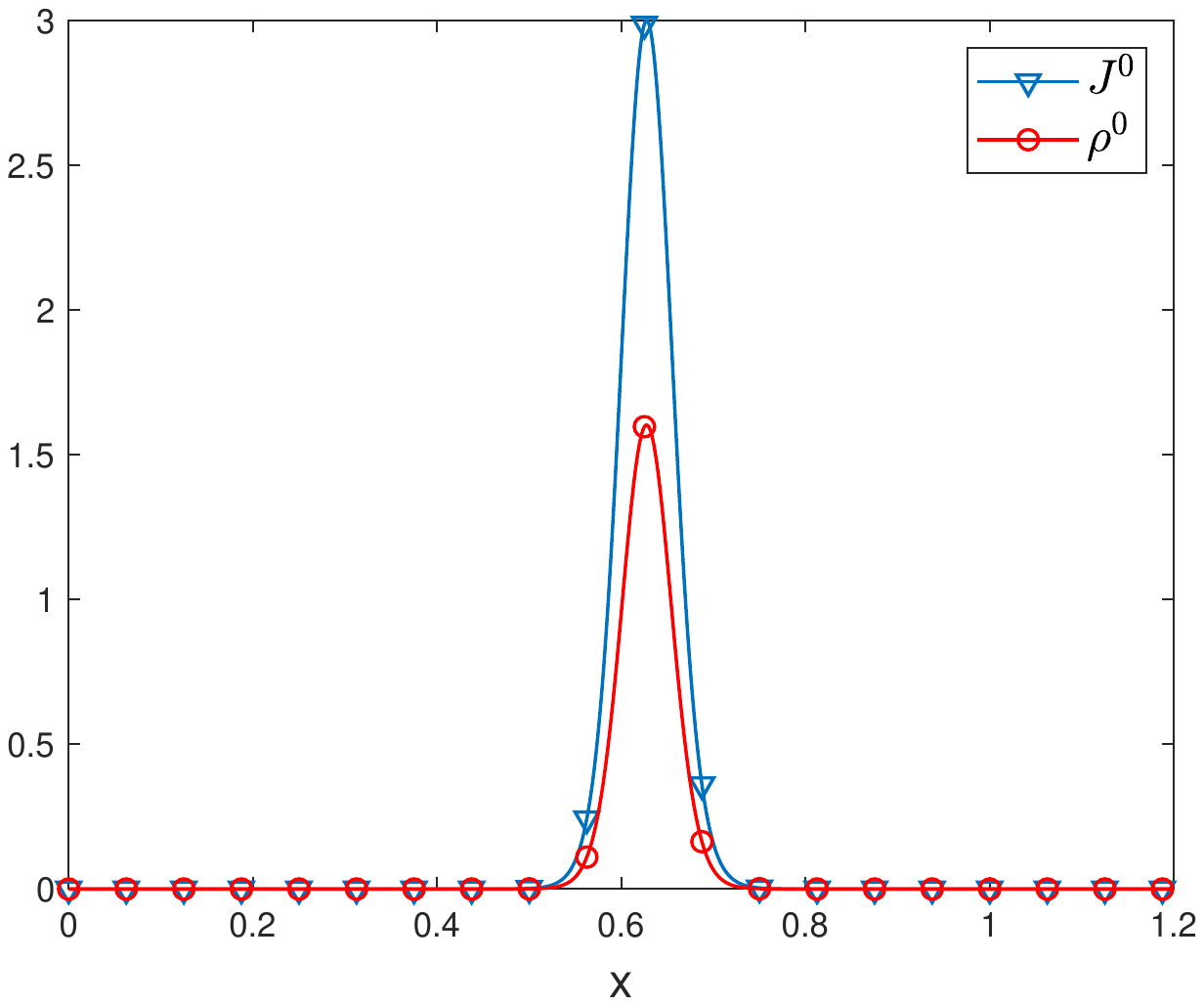}
  }

  \subfloat[$t = 0.4$]{
  \includegraphics[width=0.45\textwidth]{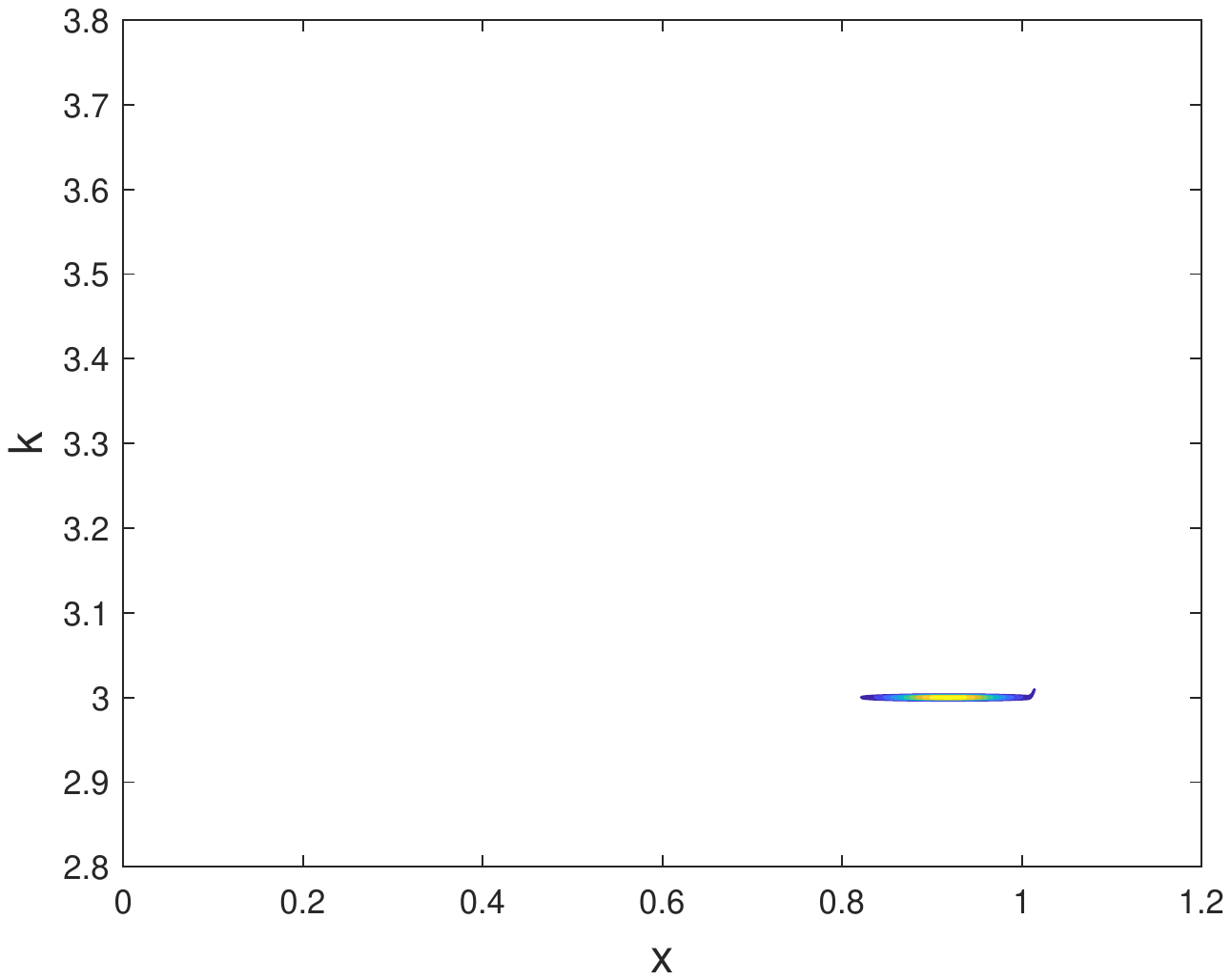}
  \includegraphics[width=0.45\textwidth]{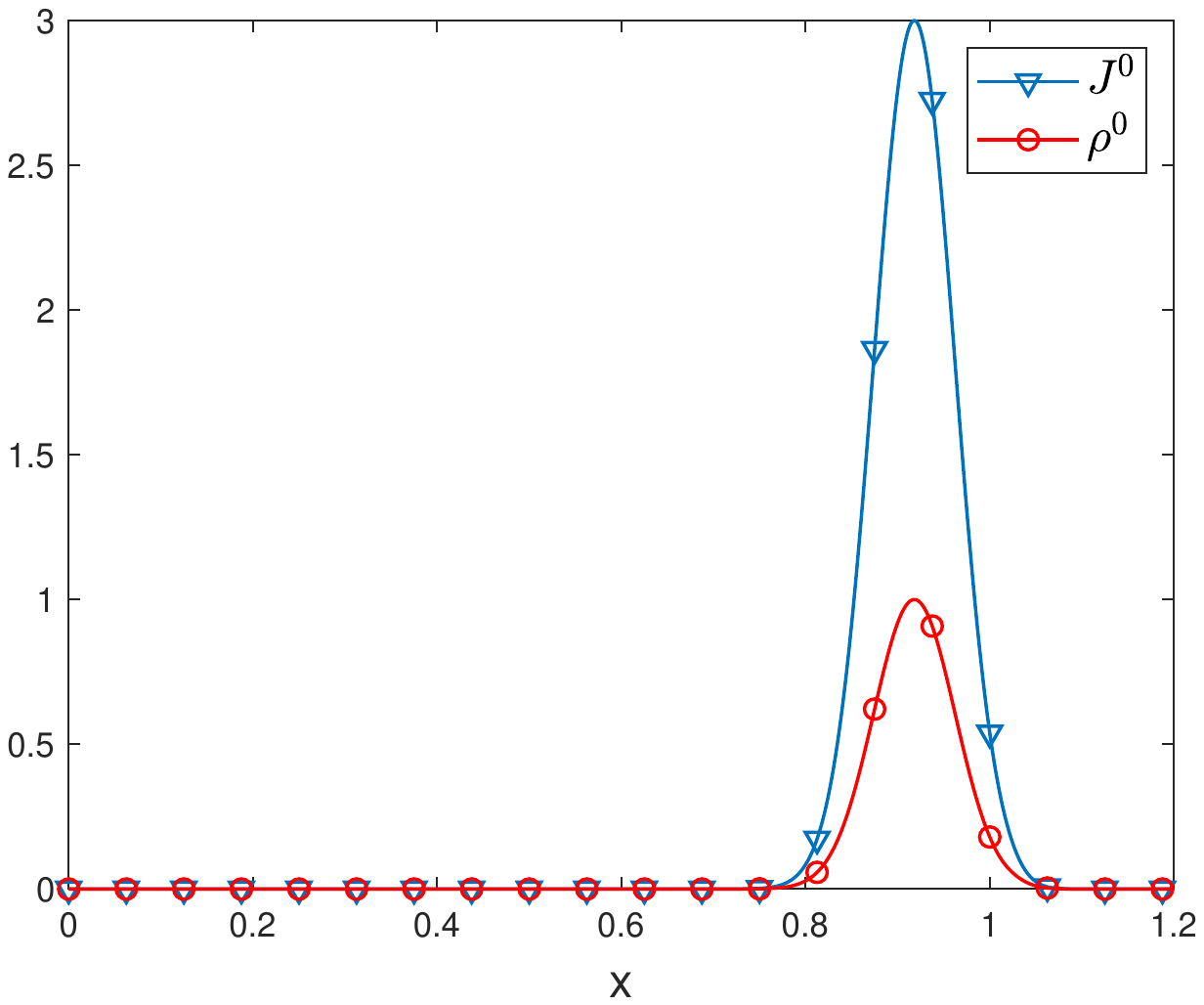}
  }
  \caption{The left column shows the contour of $W^0$ in phase space and the right column shows the particle density $\rho^0 = \int W^0 \rmd k$ and current density $J^0 = m_0 \int  k W^0 \rmd k$. The mass $m_0$~\cref{eqn:mass_ex3} and potential $V_0$~\cref{eqn:potential_ex3}.}
  \label{fig:contour_ex3}
\end{figure}

\begin{figure}[tbhp]
  \centering
  \subfloat[]{\label{fig:rho_ex3}\includegraphics[width=0.45\textwidth]{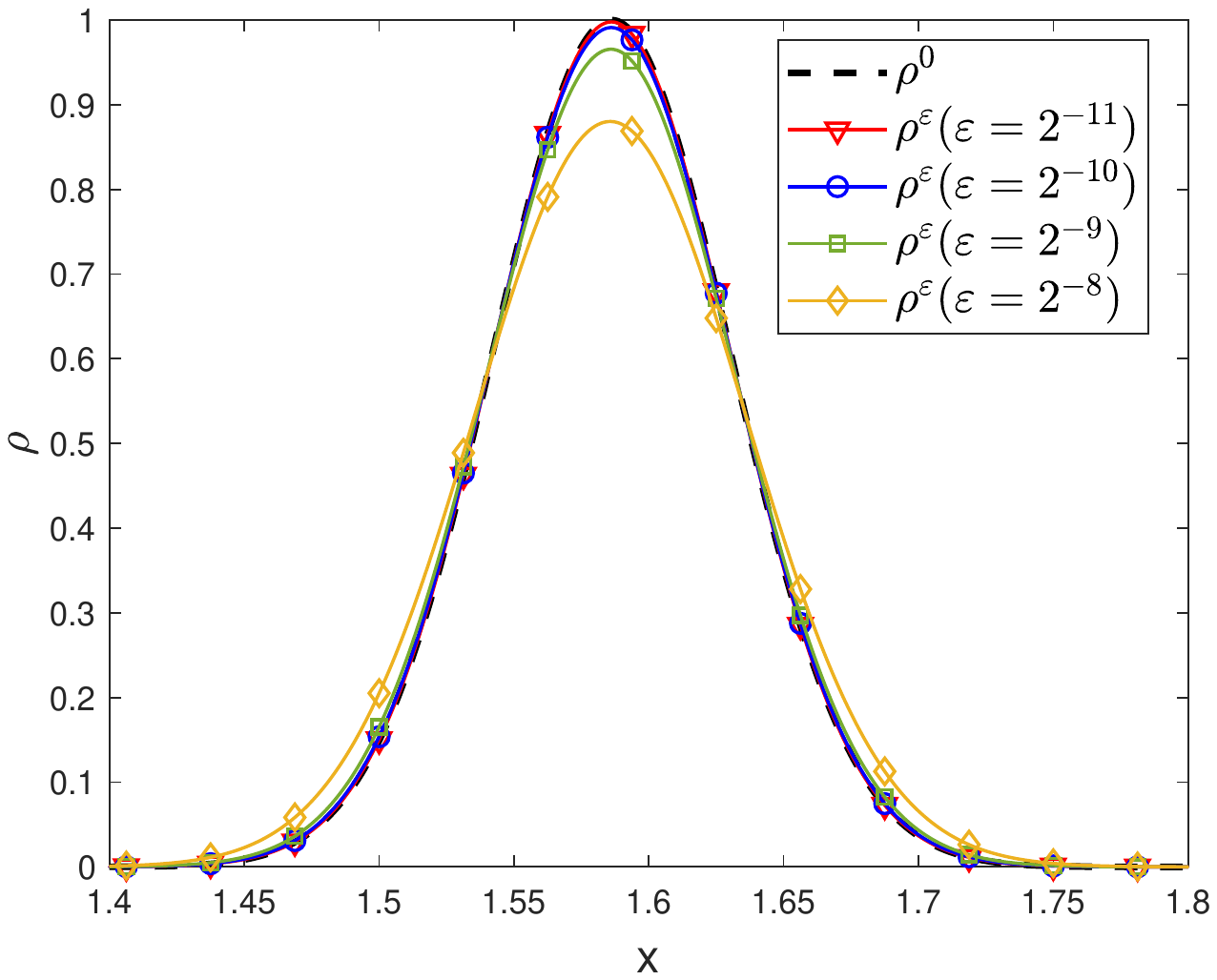}}
  \subfloat[]{\label{fig:J_ex3}\includegraphics[width=0.45\textwidth]{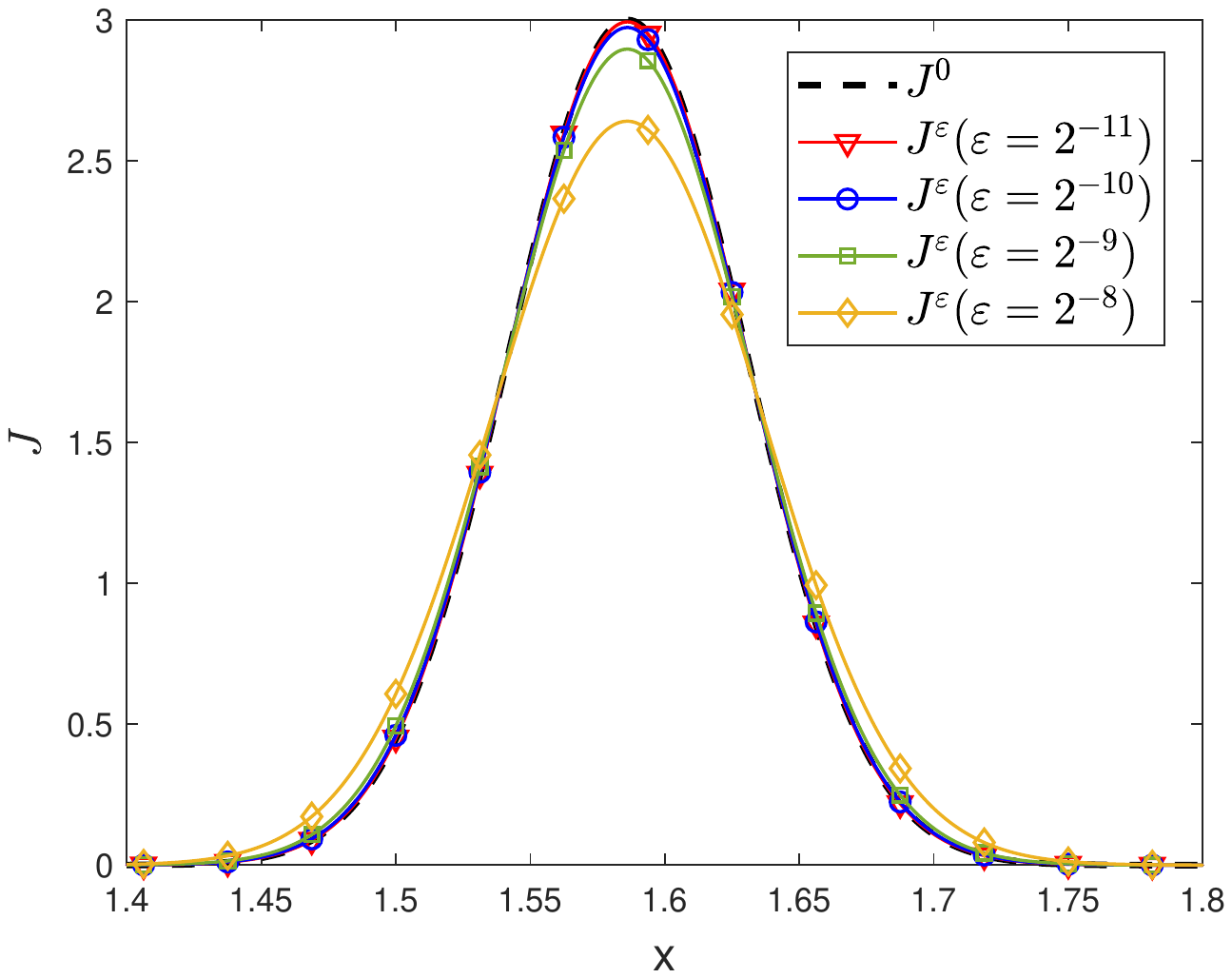}}

  \subfloat[]{\label{fig:err_rho_ex3}\includegraphics[width=0.45\textwidth]{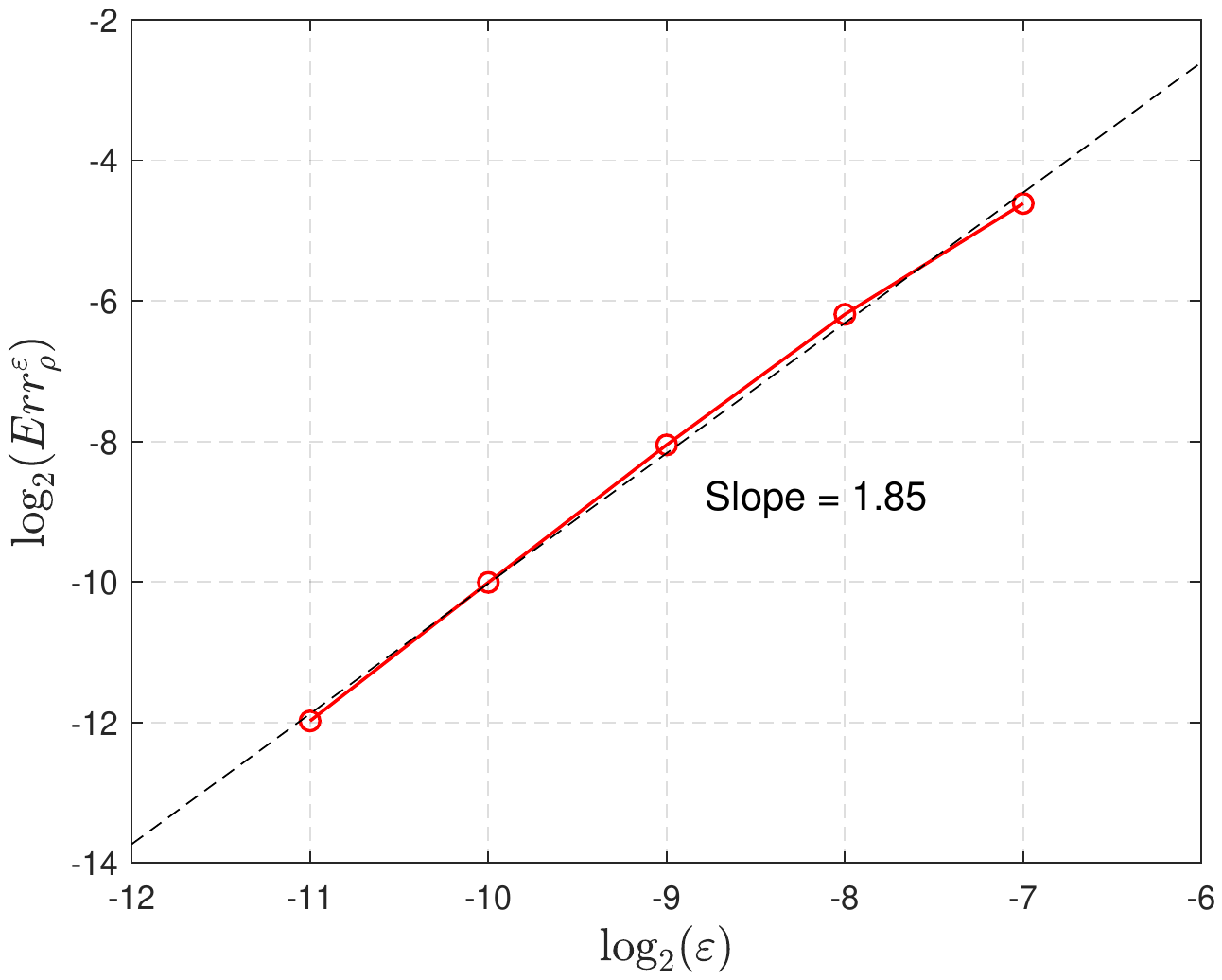}}
  \subfloat[]{\label{fig:err_J_ex3}\includegraphics[width=0.45\textwidth]{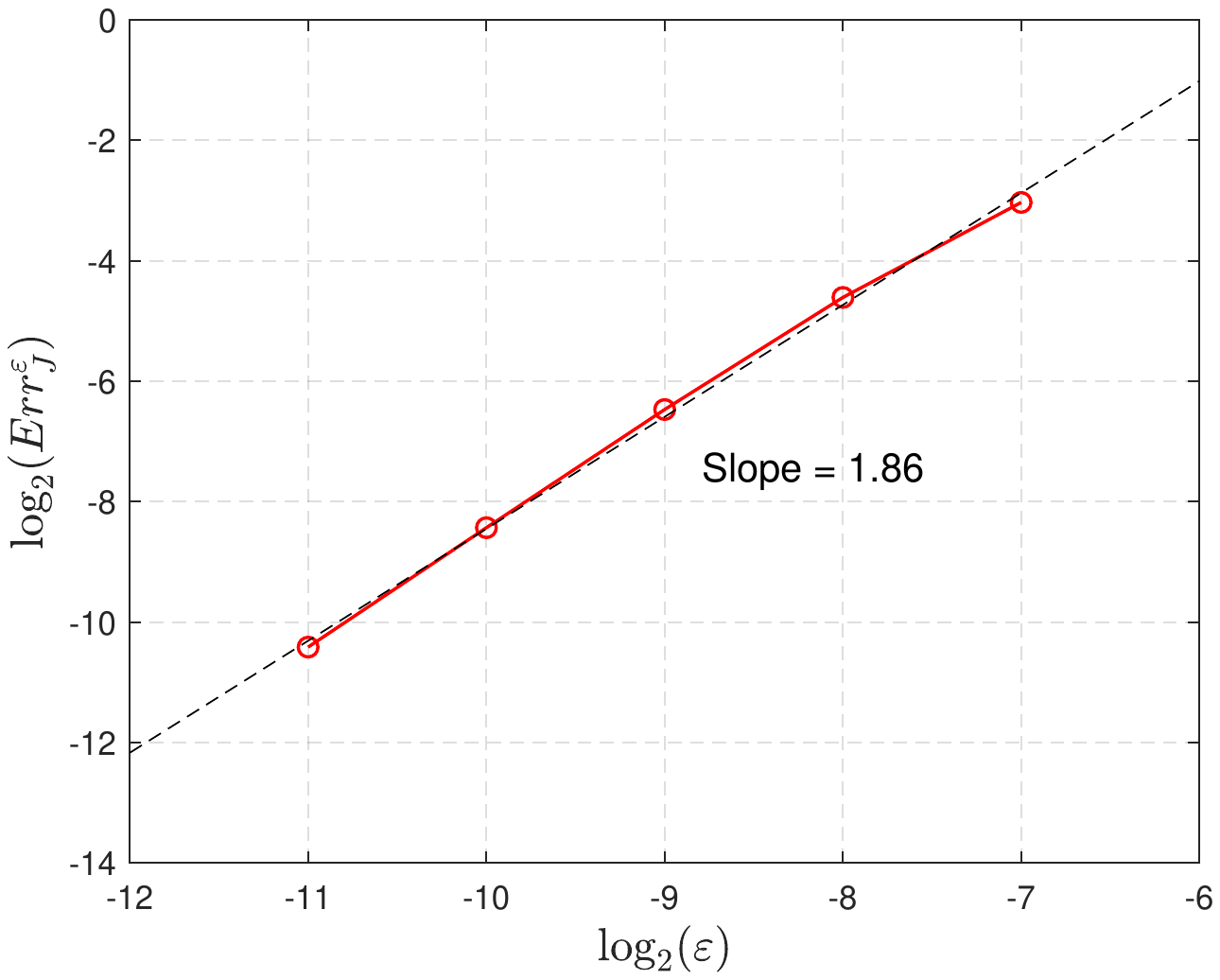}}
  \caption{ The plots (a)(b) compare particle density $\rho^{\varepsilon}$ with $\rho^0$ and current density $J^{\varepsilon}$ with $J^0$ at $T = 0.5$ for different $\varepsilon$. The plots (c)(d) show the errors $Err^{\varepsilon}_\rho$ and $Err^{\varepsilon}_J$ as a function of $\varepsilon$. The decay rate suggests that both errors are of $O(\varepsilon^2)$. The mass $m_0$~\cref{eqn:mass_ex3} and potential $V_0$~\cref{eqn:potential_ex3}.}
\end{figure}

\subsection{Illustration of~\cref{thm:W_rand}}
We show the numerical results on illustrating~\cref{thm:W_rand} here, namely, we will compute VMSE with small $\varepsilon$ and random mass, and compare the numerical results, when taking expectation values, with that of the limiting radiative transfer equation.

\subsubsection{Numerical setup}
As a set-up, we take the computational domain to be $\Omega = [0,L]\times[0,T]$, and set the correlation function to be:
\begin{equation}\label{eqn:correlation_num}
R(t,x) = \mathbb{E}[m_1(s,z)m_1(t+s,x+z)] = D^2\exp(-t/a-x/b)\,,
\end{equation}
where $a>0, b>0$ and $D^2$ is the variance of $m_1$.

We choose the initial data to have a Gaussian form:
\begin{equation}\label{eqn:initialSchr}
\ue_I(x) = \exp\left(-A(x-x_0)^2 + \frac{\ri}{\varepsilon}p_0x\right)\,.
\end{equation}
The periodic boundary conditions are imposed
\begin{equation}
\ue(t,0) = \ue(t,L), \quad \partial_x\ue(t,0) = \partial_x\ue(t,L)\,.
\end{equation}

Correspondingly the transport equation~\cref{eqn:RTE} has the initial data:
\begin{equation}\label{eqn:initialRTE}
W_I^0(x,k) = \exp(-2A(x-x_0)^2)\delta(k-p_0)\,,
\end{equation}
and it is equipped with periodic conditions:
\begin{equation}
W(t,0,k) = W(t,L,k),\quad \text{for } t>0 \text{ and all }k\in \bbr.
\end{equation}

Similar to the previous subsection, $L$ is set to be large enough and the periodic boundary condition plays minimum role.

The computation of the limiting radiative transfer equation is rather straightforward. Due to the form of the correlation function~\cref{eqn:correlation_num}, one has
\begin{equation}
\hat{R}(\omega,p) = \frac{4abD^2}{(1+a^2\omega^2)(1+b^2p^2)}\,.
\end{equation}
Since $m_0$ is a deterministic slow-varying function, the equation composes of two transport terms, which we use a fifth-order WENO scheme~\cite{JiSh:1996efficient}, and a collision operator,  which we apply the trapezoidal rule to approximate.

There are more numerical difficulties regarding the computation of VMSE. The challenge is two-folded: dealing with the randomness, and resolving the high oscillation. To handle the randomness, we perform the Karhunen-Lo\'eve expansion by setting~\cite{loeve}
\begin{equation}\label{eqn:KLseries}
m_1(t/\varepsilon,x/\varepsilon) = D\sum_{i,j=1}^{\infty}\sqrt{\lambda_i^{\varepsilon}\sigma_j^{\varepsilon}}\psi_i^{\varepsilon}(t)\phi_j^{\varepsilon}(x)\xi_{ij},
\end{equation}
where $\xi_{ij}$ are i.i.d. random variables with
\[
\mathbb{E}[\xi_{ij}] = 0\,,\quad \mathbb{E}[\xi_{ij}^2] = 1, \quad\forall i,j = 1,2,\cdots\,.
\]
The form of $\xi$ depends on the field, and we numerically use either uniformly distributed random variable or Gaussian random variable. $\lambda_i^{\varepsilon}$ and $\sigma_j^{\varepsilon}$ are descending eigenvalues corresponding eigenfunctions $\psi_i^{\varepsilon}$ and $\phi_j^{\varepsilon}$:
\begin{equation}
\begin{aligned}
\int_0^{T}e^{-\frac{|t-s|}{a\varepsilon}}\psi_i^{\varepsilon}(s)\rmd s = \lambda_i^{\varepsilon} \psi_j^{\varepsilon}(t)\,,\quad \int_0^{L}e^{-\frac{|x-z|}{b\varepsilon}}\phi_j^{\varepsilon}(z)\rmd z = \sigma_j^{\varepsilon} \phi_j^{\varepsilon}(x)\,.
\end{aligned}
\end{equation}

For the particular form of $R$ defined in~\cref{eqn:correlation_num}, it is shown in~\cite{Xi:2010} that
\begin{equation}
\begin{aligned}
&\lambda_i^{\varepsilon} = \frac{2a\varepsilon}{1+a^2\varepsilon^2w_i^2},\quad \sigma_j^{\varepsilon} = \frac{2b\varepsilon}{1+b^2\varepsilon^2v_j^2}\,,\\
&\psi_i^{\varepsilon}(t) =
\begin{cases}
\sin(w_i (t-T/2))\bigg/\sqrt{\frac{T}{2}-\frac{\sin(w_i T)}{2w_i}},\quad \text{if }i\text{ is even}\,,\\
\cos(w_i (t-T/2))\bigg/\sqrt{\frac{T}{2}+\frac{\sin(w_i T)}{2w_i}},\quad \text{if }i\text{ is odd}\,,
\end{cases}\\
&\phi_j^{\varepsilon}(x) =
\begin{cases}
\sin(v_j (x-L/2))\bigg/\sqrt{\frac{L}{2}-\frac{\sin(v_j L)}{2v_j}},\quad \text{if }j\text{ is even}\,,\\
\cos(v_j (x-L/2))\bigg/\sqrt{\frac{L}{2}+\frac{\sin(v_j L)}{2v_j}},\quad \text{if }j\text{ is odd}\,.
\end{cases}
\end{aligned}
\end{equation}
where $w_i$ and $v_j$ are solutions to
\begin{equation}
\begin{aligned}
&\begin{cases}
a\varepsilon w_i + \tan(w_i\frac{T}{2}) = 0, \quad \text{for even }i\,,\\
1 - a\varepsilon w_i \tan(w_i\frac{T}{2}) = 0, \quad \text{for odd }i\,,
\end{cases}\\
&\begin{cases}
b\varepsilon v_j + \tan(v_j\frac{L}{2}) = 0, \quad \text{for even }j\,,\\
1 - b\varepsilon v_j \tan(v_j\frac{L}{2}) = 0, \quad \text{for odd }j\,.
\end{cases}
\end{aligned}
\end{equation}

Numerically we perform Monte Carlo, that is to sample a large number of $N$ configurations of $\xi_{ij}$ which give rise to $N$ configuration of $m_1$. For these deterministic $m_1$, we compute the deterministic VMSE, and take the ensemble mean and variance in the end.

For Schr\"odinger equation, the Crank-Nicolson and spectral method are applied as in the previous section with the scales resolved: $\Delta x = O(\varepsilon)$ and $\Delta t = o(\varepsilon)$. Note that $m_1$ is already deterministic for each Monte Carlo sample.

Numerically to illustrate Theorem~\cref{thm:W_rand}, we mainly compare the macroscopic quantities. In particular we will compare the particle density and the current density, that is to compare
\begin{equation}\label{eqn:density_ue}
\rho^0(t,x) = \int W^0(t,x,k)\rmd{k}\,, \qquad \mathbb{E}[\rho^{\varepsilon}(t,x)] = \mathbb{E}[|u^{\varepsilon}(t,x)|^2]\approx \frac{1}{N}\sum_{i = 1}^{N} |u_i^{\varepsilon}(t,x)|^2\,,
\end{equation}
and
\begin{equation}\label{eqn:J_ue}
\begin{aligned}
J^0(t,x) &= \int m_0(t,x)k W^0(t,x,k)\rmd{k}\,, \\
\mathbb{E}[J^{\varepsilon}(t,x)] &= \mathbb{E}[\varepsilon\text{Im}\left(m^\varepsilon(t,x)\overline{\ue(t,x)}\nabla_x\ue(t,x)\right)]\\
&\approx \frac{1}{N}\sum_{i = 1}^{N} \varepsilon\text{Im}\left(m_i^\varepsilon(t,x)\overline{\ue_i(t,x)}D^\mathrm{s}_x\ue_i(t,x)\right)\,.
\end{aligned}
\end{equation}

\subsubsection{Numerical examples}
We demonstrate two numerical examples: one to illustrates~\cref{thm:W_rand} where VMSE has the right scaling, and in the second example we use the wrong perturbation scaling simply to observe the differnce.

In the first example, we set $\Omega = [0,1.625]\times[0,0.4]$, and the parameters in $R(t,x)$ (defined in~\cref{eqn:correlation_num}) are $a = b = 100$. For the initial data~\cref{eqn:initialSchr} and~\cref{eqn:initialRTE}, we take $A = 2^8$ and $x_0 = 0.3$, and set  $m_0 = 1$. To compute RTE, we set $\Delta x = \Delta k = 2^{-10}$ and $\Delta t = 2^{-12}$. To compute VMSE, we set $\varepsilon = 2^{-n}$ and we use the discretization:
\begin{equation}\label{eqn:stepSchr}
\Delta t = 2^{-1.2n-3}, \quad \Delta x = 2^{-n-2}\,.
\end{equation}
The KL series is truncated at $\nkl$ finite terms with
\begin{equation}\label{eqn:KLtruncate}
\sqrt{(\lambda_i^{\varepsilon}\sigma_j^{\varepsilon})_{\nkl}/\lambda_1^{\varepsilon}\sigma_1^{\varepsilon}}<2^{-9}\,.
\end{equation}
As $i$ and $j$ increase, the oscillations in the associated eigenfunctions $\phi$ and $\psi$ also increase, but the choice of $\Delta x, \Delta t$ ensures that these oscillations are resolved. For $\varepsilon = 2^{-6}, 2^{-8}, 2^{-10}$ respectively, $\nkl=663, 3157, 27968$ to ensure~\cref{eqn:KLtruncate}. $10000$ Monte Carlo samples are used in total.

In~\cref{fig:mass_transport1} we show the solution to the transport equation~\cref{eqn:RTE} at three specific time for $D=1.5$ and $p_0=1.5$.

\begin{figure}[tbhp]
  \centering
  \subfloat[$t = 0.1167$]{
  \includegraphics[width=0.45\textwidth]{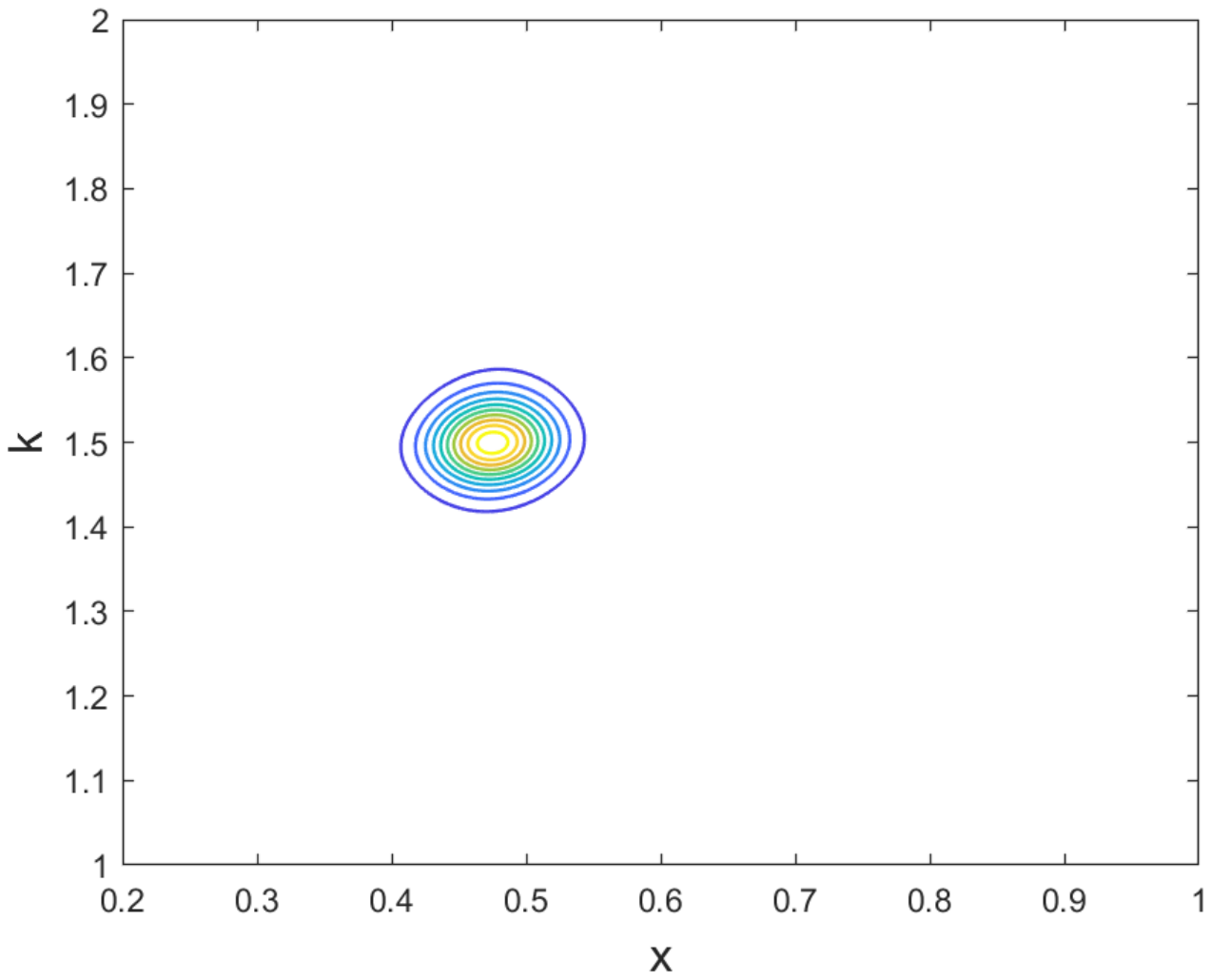}
  \includegraphics[width=0.45\textwidth]{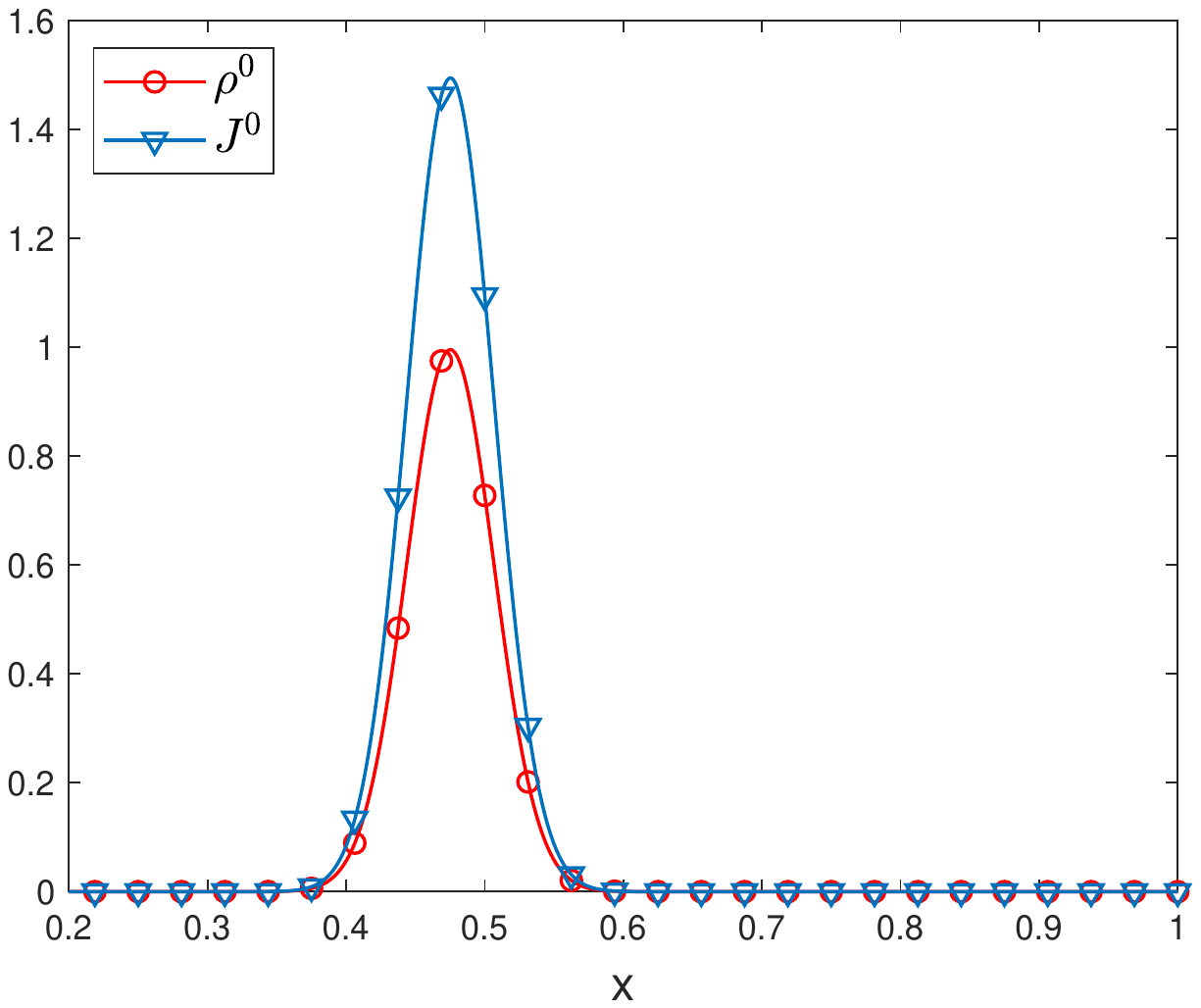}
  }

  \subfloat[$t = 0.2335$]{
  \includegraphics[width=0.45\textwidth]{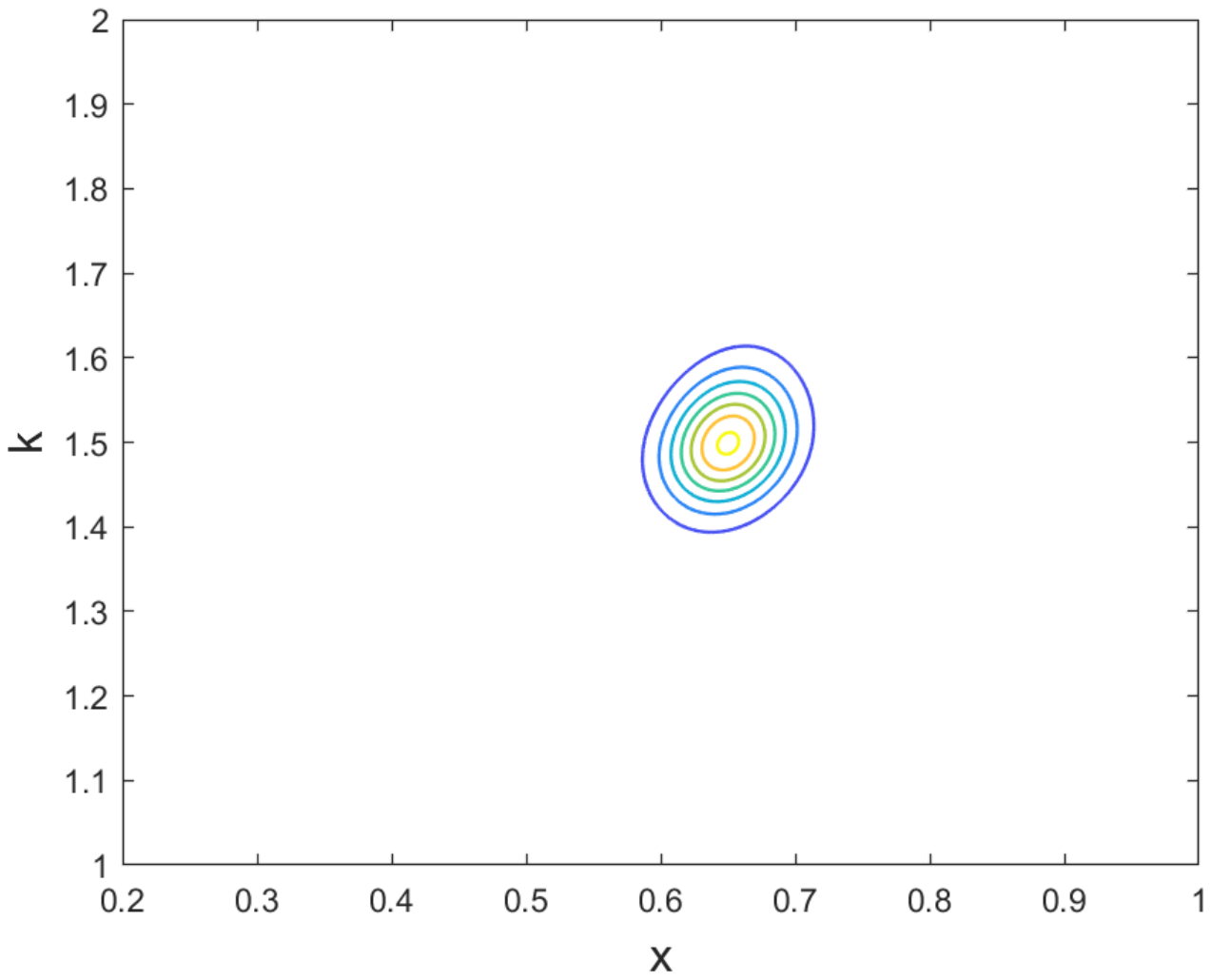}
  \includegraphics[width=0.45\textwidth]{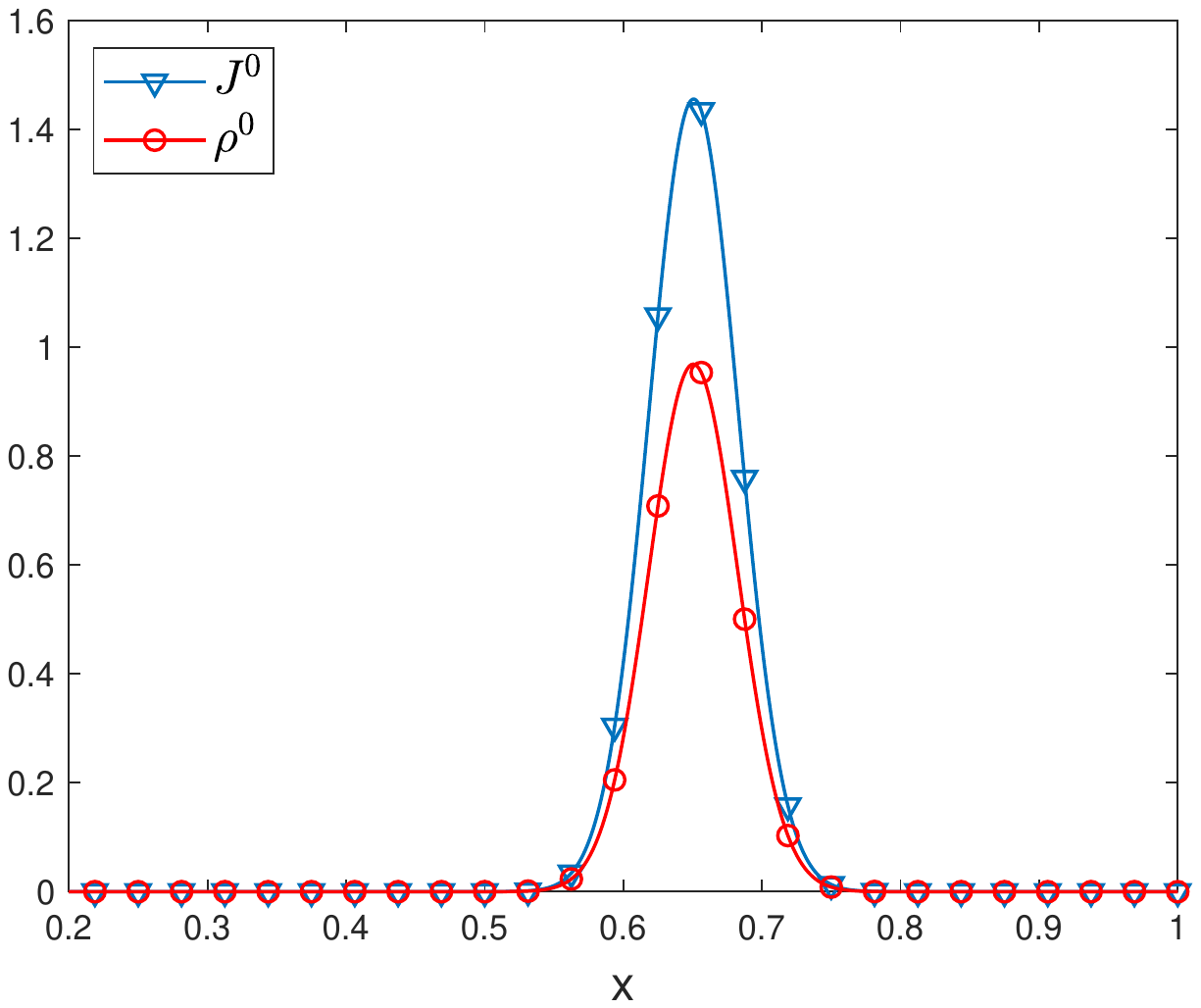}
  }

  \subfloat[$t = 0.3500$]{
  \includegraphics[width=0.45\textwidth]{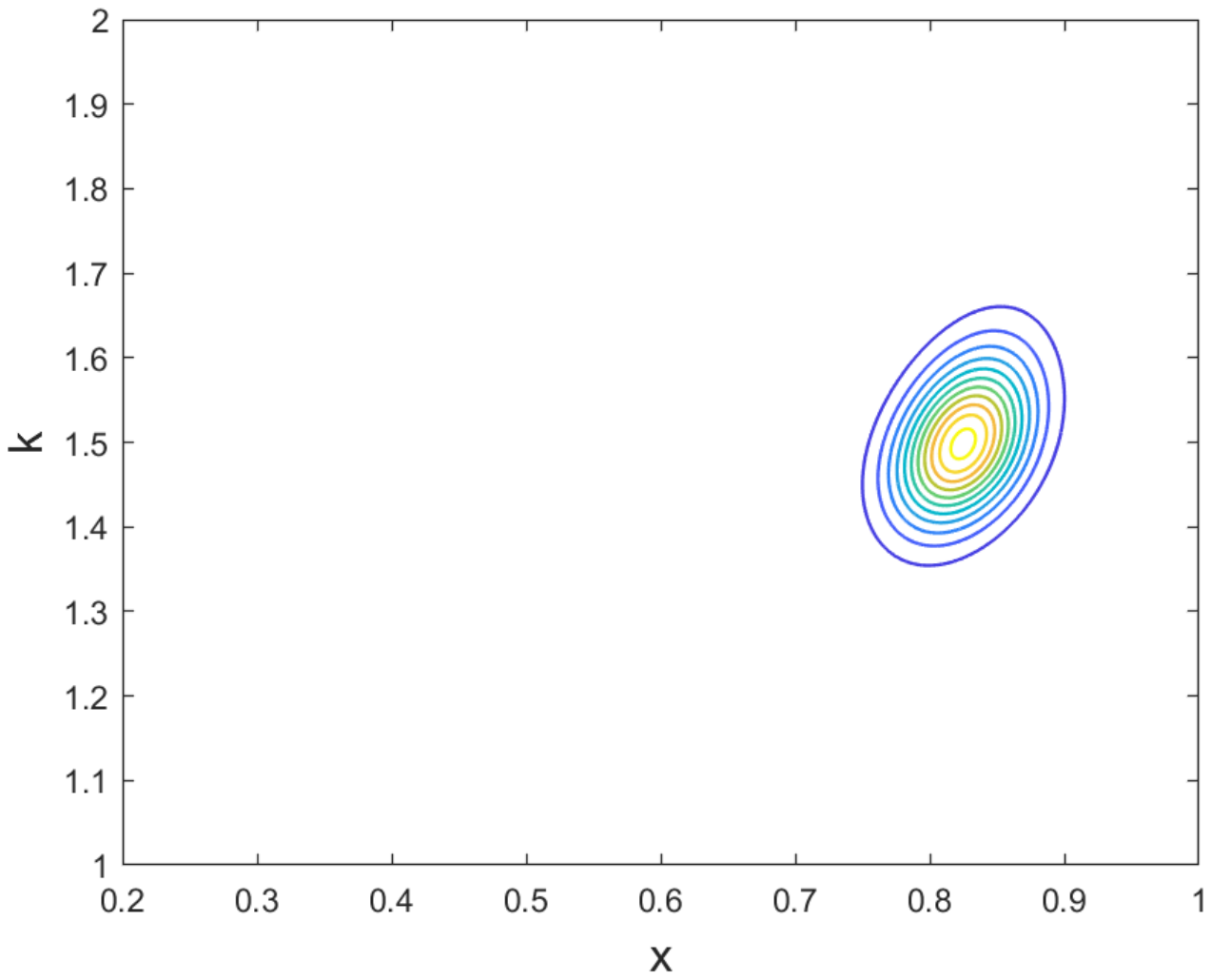}
  \includegraphics[width=0.45\textwidth]{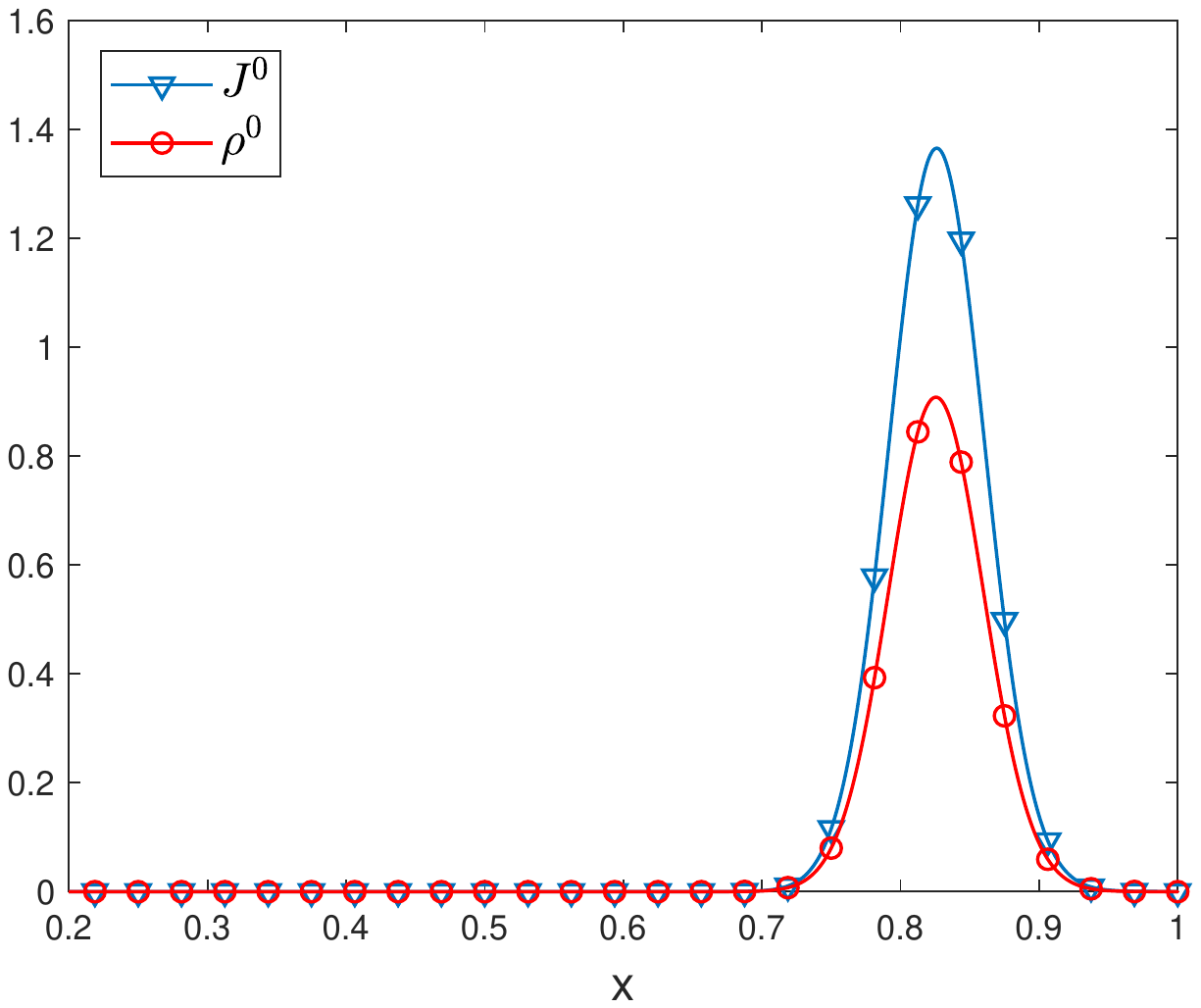}
  }
  \caption{The left column shows the contour of $W^0$ in phase space and the right column shows the particle density $\rho^0 = \int W^0 \rmd k$ and the current density $J^0 = m_0\int k W^0 \rmd k$.}
  \label{fig:mass_transport1}
\end{figure}

In~\cref{fig:mass_RTE_vs_Schr1_Dp} we show that for different pairs of $(D,p_0)$, the numerical solution to RTE and numerical solution to VMSE are rather close for $\varepsilon = 2^{-10}$.
\begin{figure}[htbp]
  \centering
  \subfloat[]{\includegraphics[width=0.45\textwidth]{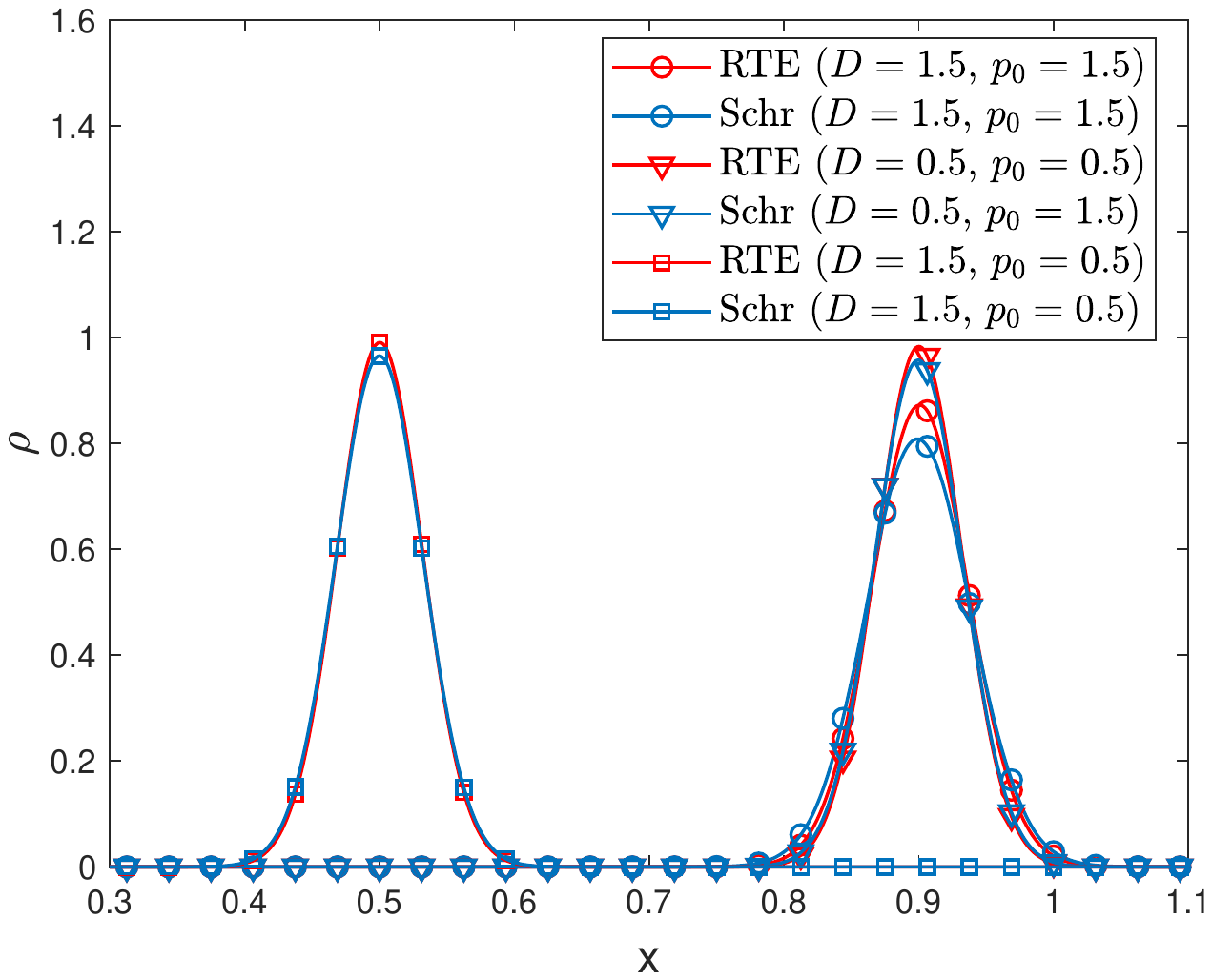}}
  \subfloat[]{\includegraphics[width=0.45\textwidth]{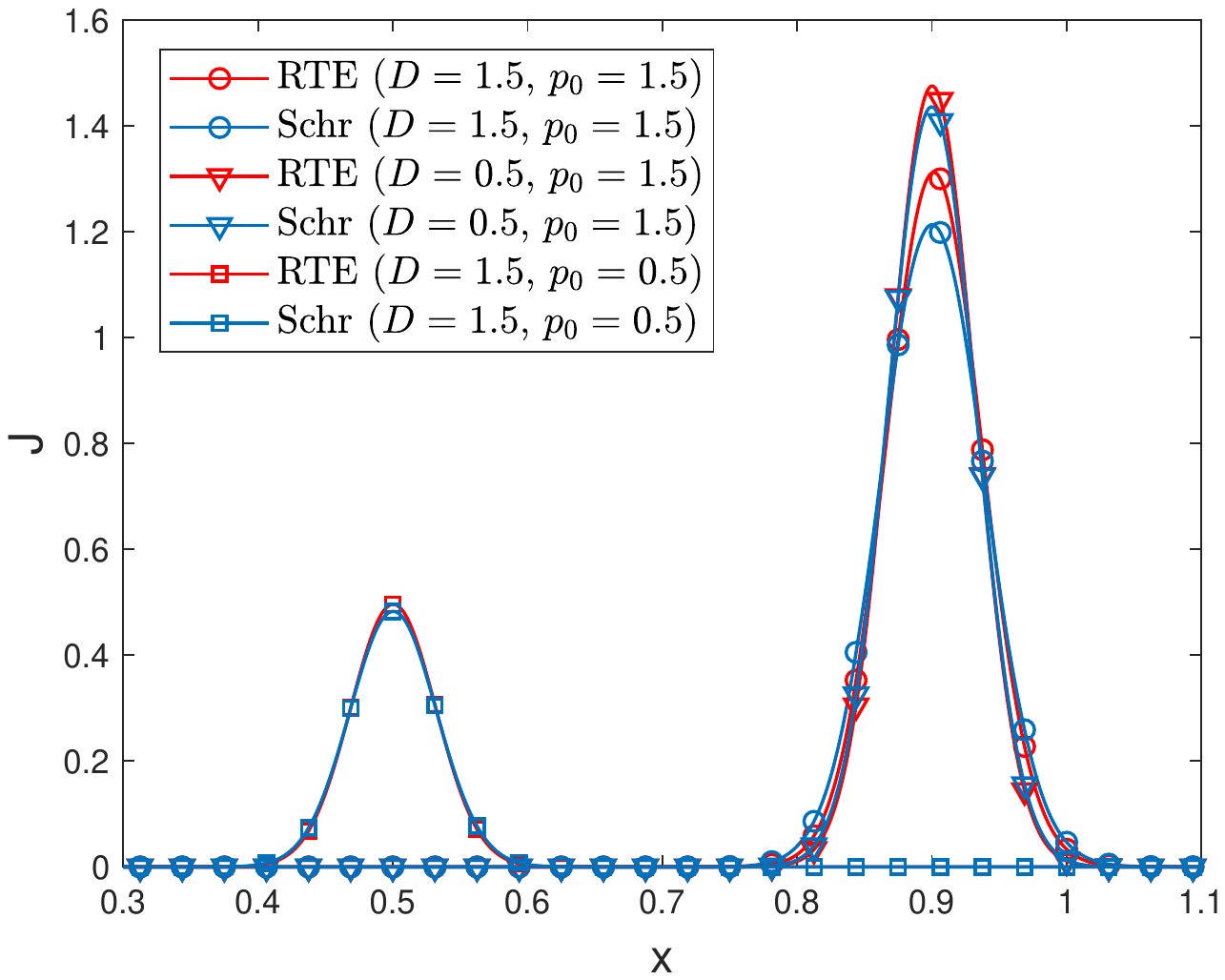}}
  \caption{The plot (a) shows the particle density $\mathbb{E}[\rho^{\varepsilon}]$ ($\varepsilon = 2^{-10}$) and $\rho^0 = \int W^0 \rmd k$ at $t = 0.4$ with different $(D,p_0)$ pairs. The plot (b) shows the current density $\mathbb{E}[J^{\varepsilon}]$ ($\varepsilon = 2^{-10}$) and $J^0 = m_0 \int k W^0 \rmd k$ at $t = 0.4$ with different $(D,p_0)$ pairs.}
  \label{fig:mass_RTE_vs_Schr1_Dp}
\end{figure}

It is fairly straightforward to observe the convergence of VMSE to RTE as $\varepsilon\to0$. Such convergence can also be quantified. Define the error:
\begin{equation}\label{eqn:err_rho}
\text{Err}^{\varepsilon}_\rho = \int_{\bbr} |\rho^0 - \mathbb{E}[\rho^{\varepsilon}]| \rmd x\,, \quad
\text{Err}^{\varepsilon}_J = \int_{\bbr} |J^0 - \mathbb{E}[J^{\varepsilon}]| \rmd x
\end{equation}
In~\cref{fig:mass_RTE_vs_Schr1} we compare $\rho^0$ and $\mathbb{E}[\rho^\varepsilon]$, $J^0$ and $\mathbb{E}[J^\varepsilon]$ with different $\varepsilon$, fixing $D = 1.5$ and $p_0= 1.5$. In~\cref{fig:mass_density_conv1}, we show the convergence of $\text{Err}^{\varepsilon}_\rho$ and $\text{Err}^{\varepsilon}_J$ as a function of $\varepsilon$ for both Gaussian and uniform distributed variable $\xi_{ij}$. According to the plot, the error decays at a rate of $O(\varepsilon)$ -- this is stronger than our ansatz where $W^{(1)}$ is assumed to be at the order of $\sqrt{\varepsilon}$. This suggests that $\mathbb{E}[W^{(1)})]$ is of higher order than $\sqrt{\varepsilon}$, but is not yet proved. We also note that although we do not have theoretical result on the convergence of $J^\varepsilon$, it is nevertheless observed numerically.

\begin{figure}[tbhp]
  \centering
  \subfloat[]{\label{fig:mass_RTE_vs_Schr1}
  \includegraphics[width=0.45\textwidth]{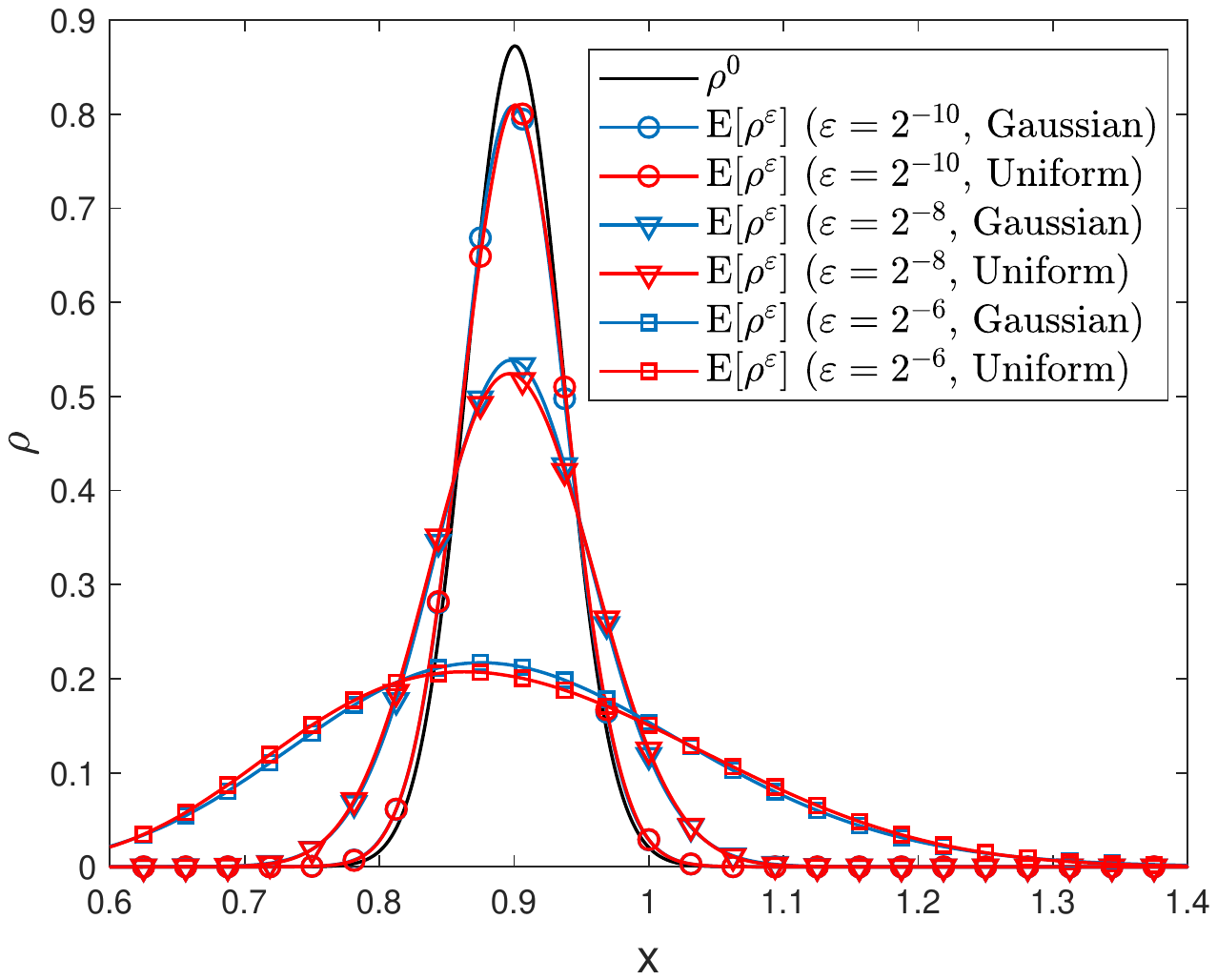}
  \includegraphics[width=0.45\textwidth]{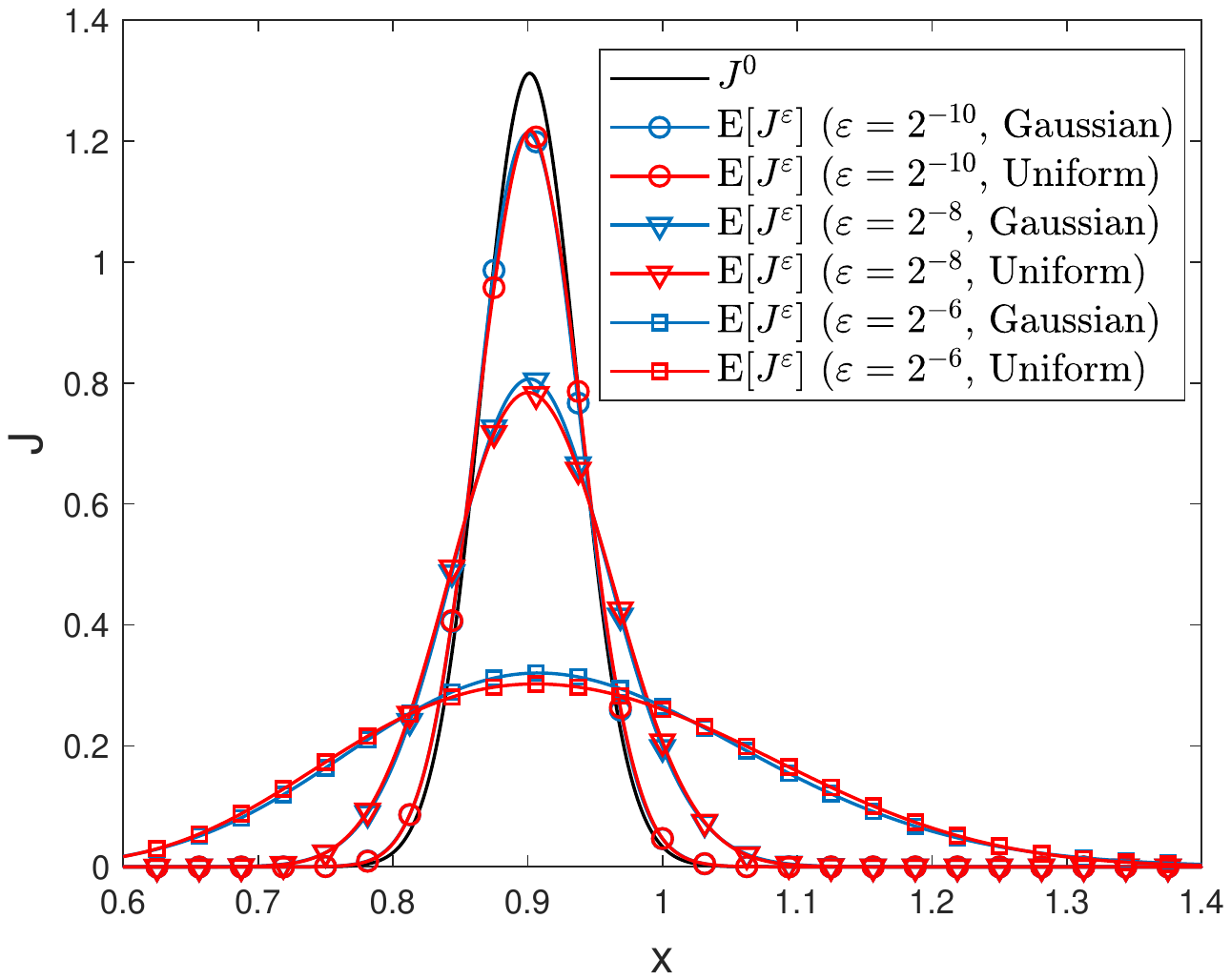}
  }

  \subfloat[]{\label{fig:mass_density_conv1}
  \includegraphics[width=0.45\textwidth]{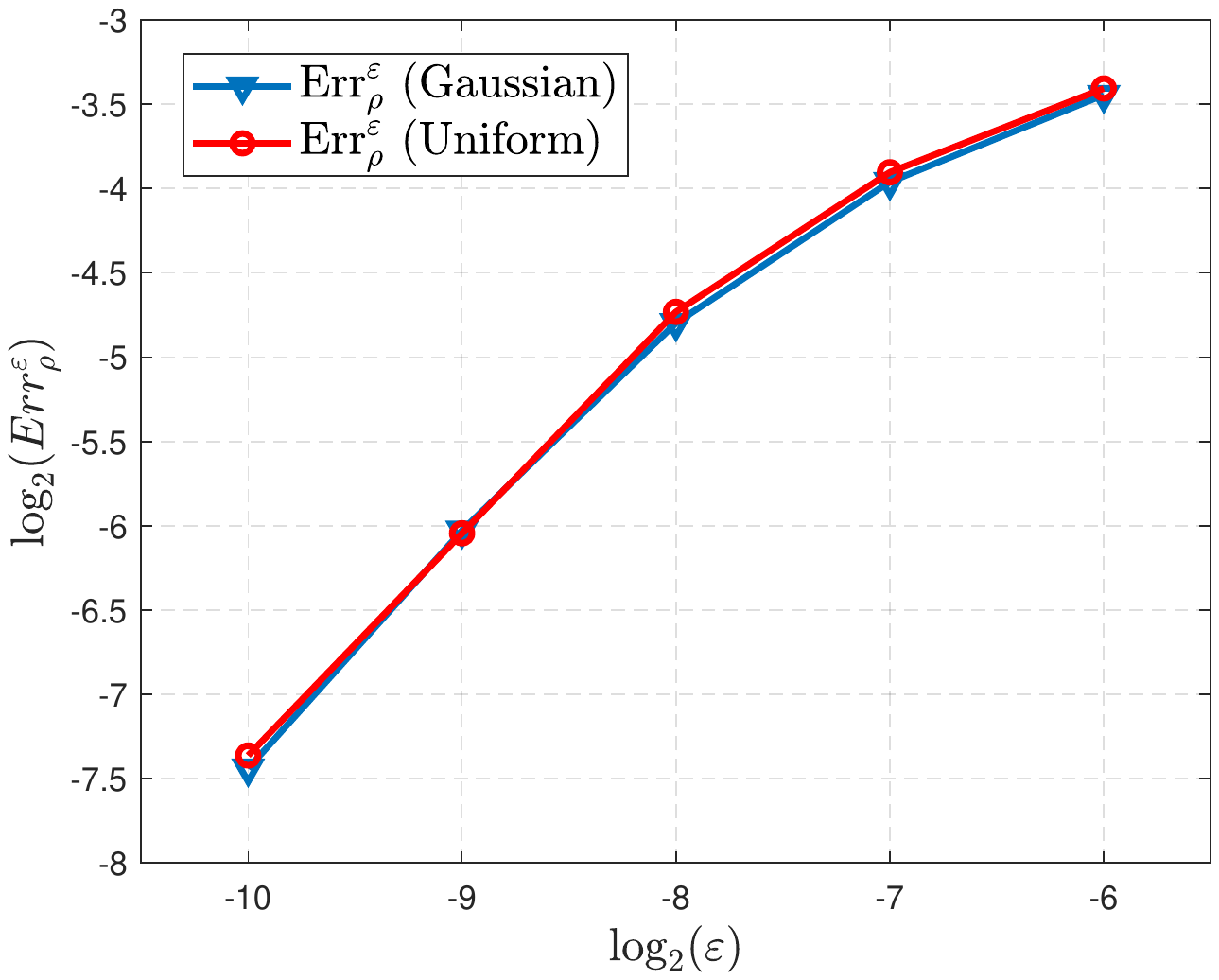}
  \includegraphics[width=0.45\textwidth]{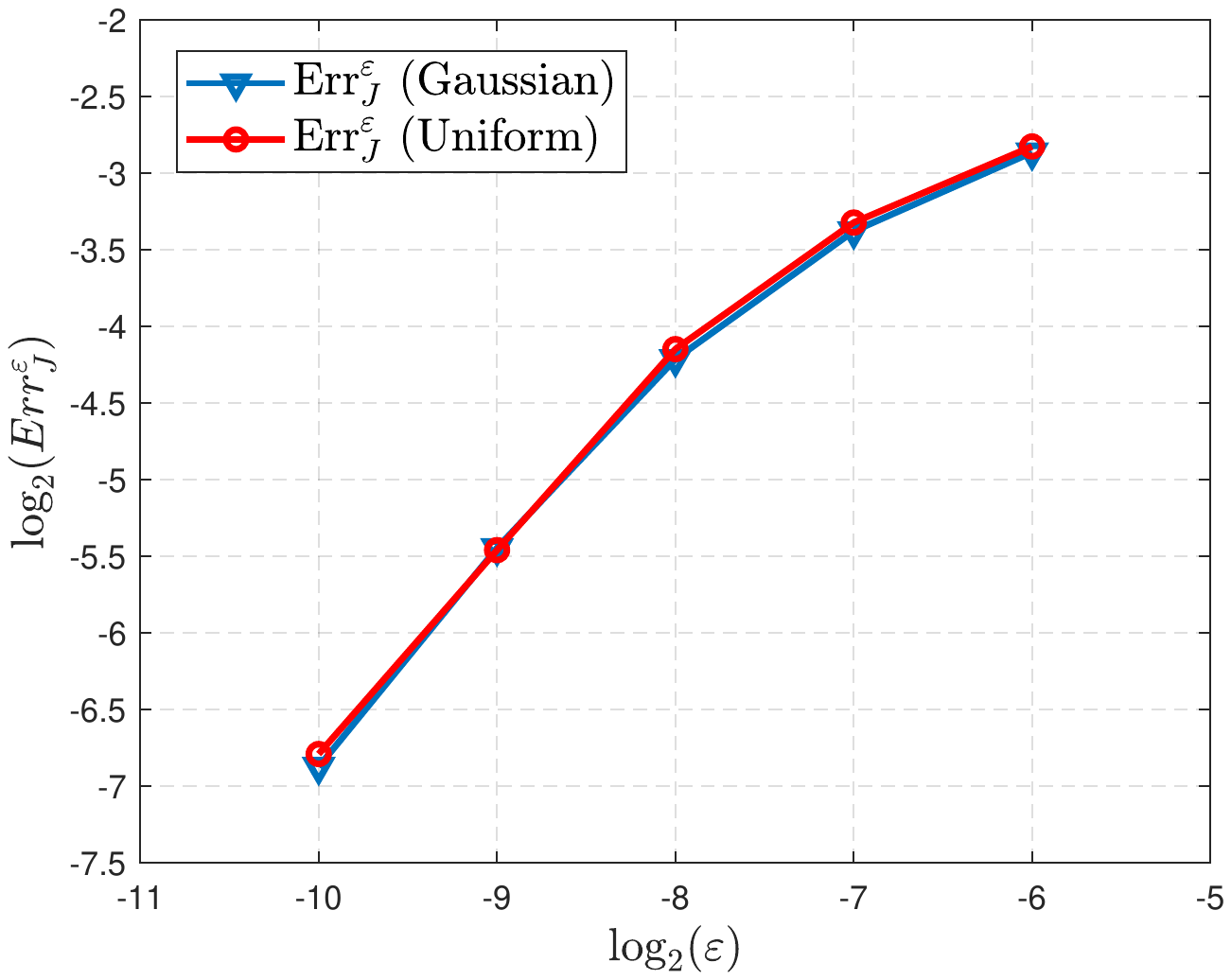}
  }
  \caption{(a) The plot compares particle density $\mathbb{E}[\rho^{\varepsilon}]$ (current density $\mathbb{E}[J^\varepsilon]$, \emph{respectively}) with $\rho^0$ ($J^0$, \emph{respectively}), defined in~\cref{eqn:density_ue} and~\cref{eqn:J_ue} at $t = 0.4$ for different $\varepsilon$ and different random distribution of $\xi_{ij}$. (b) The plot shows the $L^1$-error~\cref{eqn:err_rho} as a function of $\varepsilon$. Both Gaussian and Uniform distributions are used to sample $\xi_{ij}$. The decay rate suggests that $Err^{\varepsilon}_\rho$ and $Err^{\varepsilon}_J$ are both of $O(\varepsilon)$.}
\end{figure}

Although we do not derive the equation for the standard deviation, we do numerically investigate the statistics of $\rho$ and $J$. In particular, we set $\xi_{ij}$'s to be Gaussian random variables, and we plot, in~\cref{fig:mass_density_std1} and~\cref{fig:mass_J_std1} the standard deviation $\sigma[\rho^{\varepsilon}]$ and $\sigma[J^{\varepsilon}]$ for different $\varepsilon$, and in~\cref{fig:covariance} and~\cref{fig:covariance_J}, the covariance at $t=0.4$. The two quantities are defined as follows:
\begin{equation*}
\begin{aligned}
&\sigma[\rho^{\varepsilon}(t,x)] \approx \sqrt{ \frac{1}{N-1} \sum_{i = 1}^{N} (\rho_i^{\varepsilon}(t,x) - \mathbb{E}[\rho^{\varepsilon}(t,x)])^2}\,,\\
&\sigma[J^{\varepsilon}(t,x)] \approx \sqrt{ \frac{1}{N-1} \sum_{i = 1}^{N} (J_i^{\varepsilon}(t,x) - \mathbb{E}[J^{\varepsilon}(t,x)])^2}\,,
\end{aligned}
\end{equation*}
and
\begin{equation}
\begin{aligned}
&\text{Cov}(\rho^{\varepsilon}(t,x),\rho^{\eps}(t,y))
\approx  \frac{1}{N-1} \sum_{i = 1}^{N} (\rho_i^{\varepsilon}(t,x) - \mathbb{E}[\rho^{\varepsilon}(t,x)])(\rho_i^{\varepsilon}(t,y) - \mathbb{E}[\rho^{\varepsilon}(t,y)])\,,\\
&\text{Cov}(J^{\varepsilon}(t,x),J^{\eps}(t,y))
\approx  \frac{1}{N-1} \sum_{i = 1}^{N} (J_i^{\varepsilon}(t,x) - \mathbb{E}[J^{\varepsilon}(t,x)])(J_i^{\varepsilon}(t,y) - \mathbb{E}[J^{\varepsilon}(t,y)])\,.
\end{aligned}
\end{equation}
In the computation we set $D = 1.5$ in the correlation function~\cref{eqn:correlation} and $p_0 = 1.5$ in the initial data. Numerically we observe that with smaller $\varepsilon$ we have high standard deviation at the wave-packet center. We leave the mathematical justification to the future research.
\begin{figure}[htbp]
  \centering
  \subfloat[]{\label{fig:mass_density_std1}\includegraphics[width=0.45\textwidth]{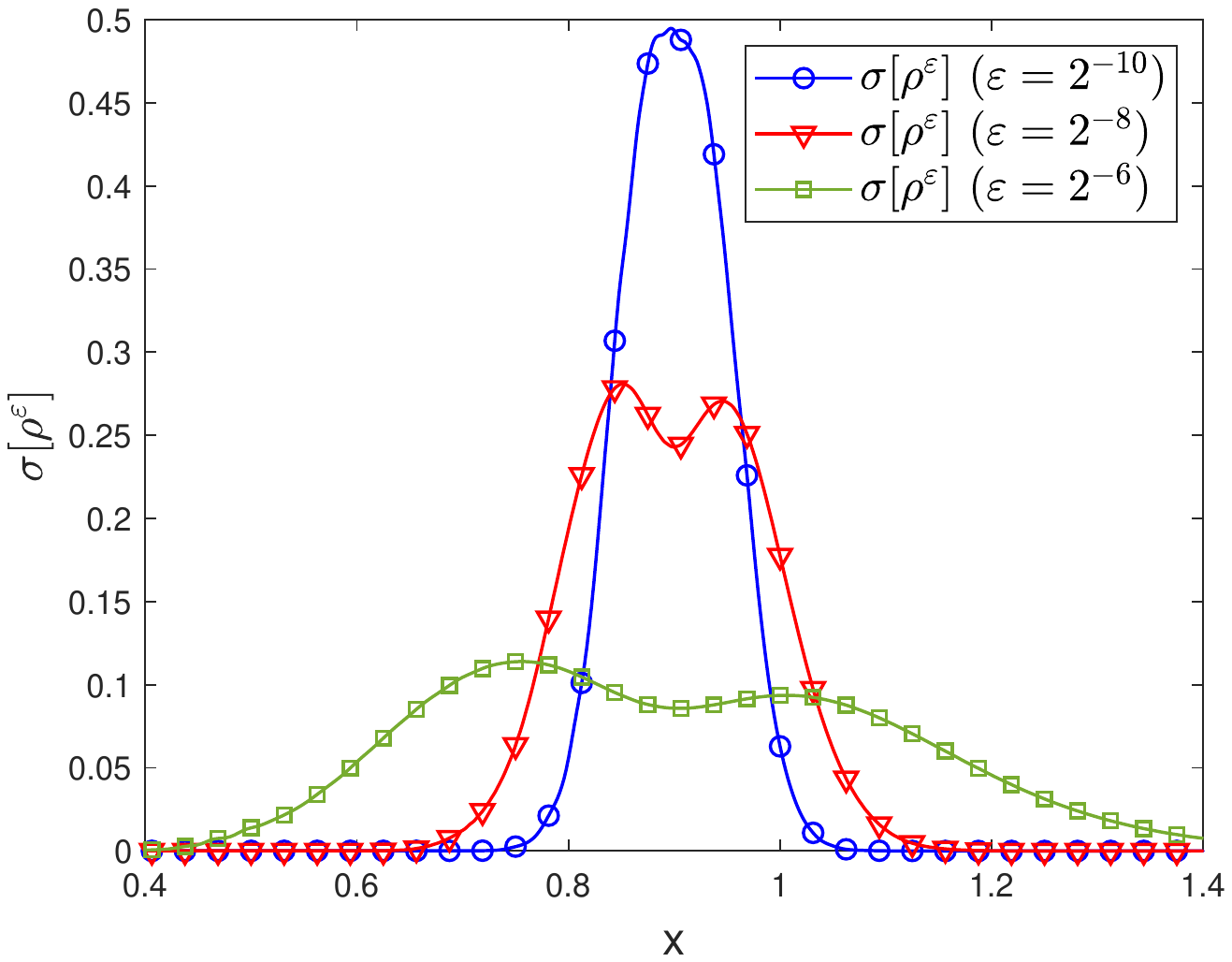}}
  \subfloat[]{\label{fig:mass_J_std1}\includegraphics[width=0.45\textwidth]{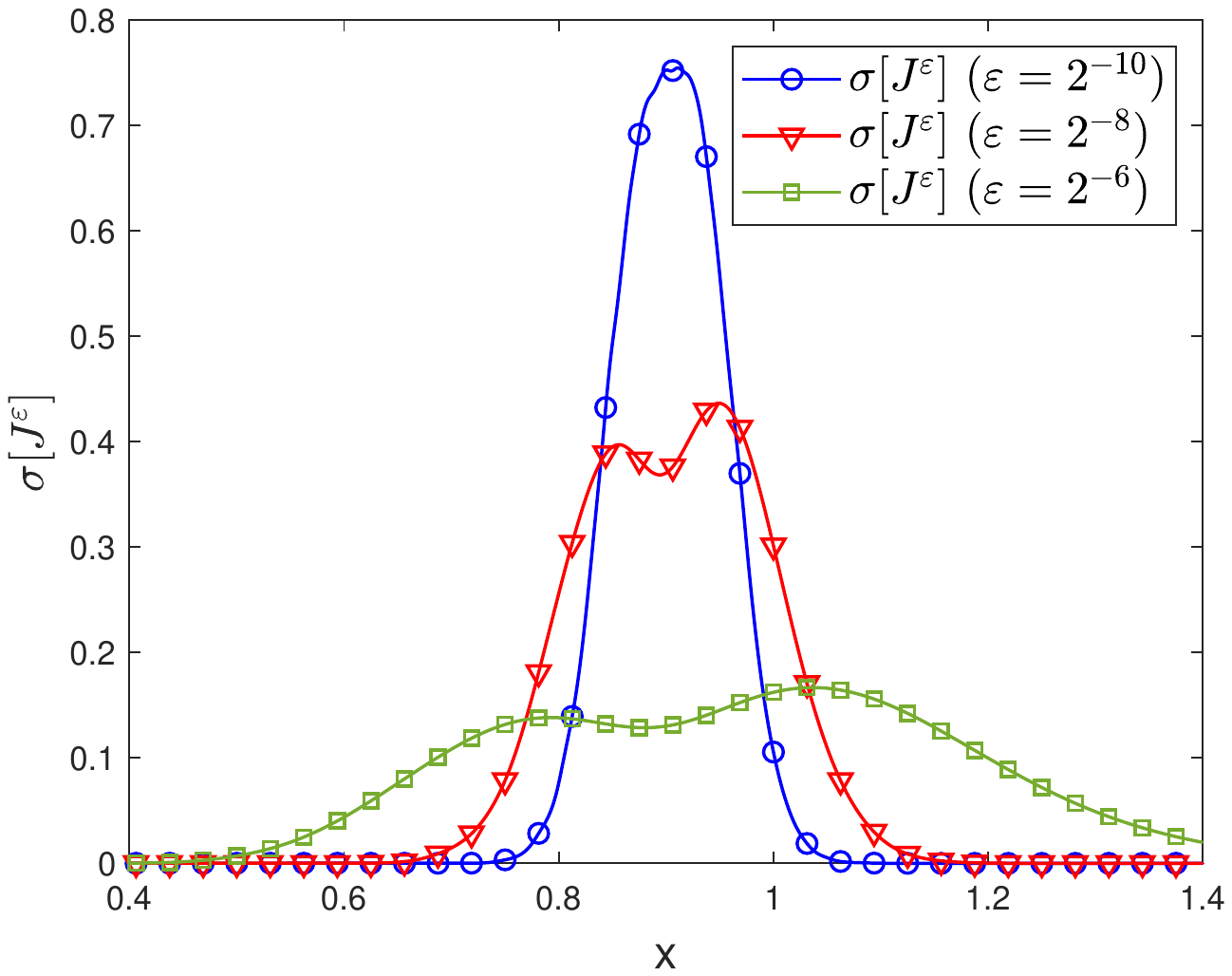}}

  \subfloat[]{\label{fig:covariance}\includegraphics[width=0.3\textwidth]{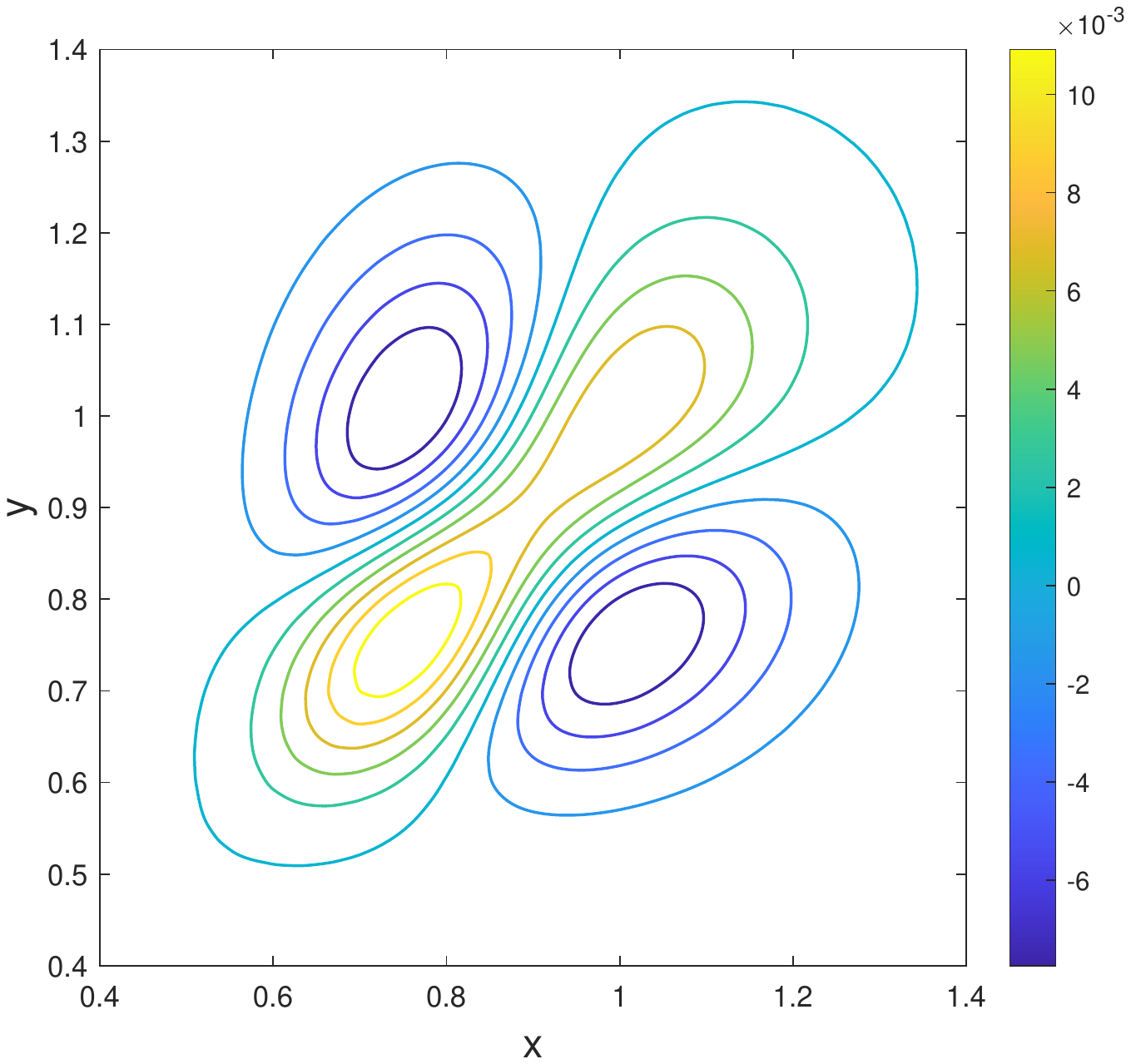}
  \includegraphics[width=0.3\textwidth]{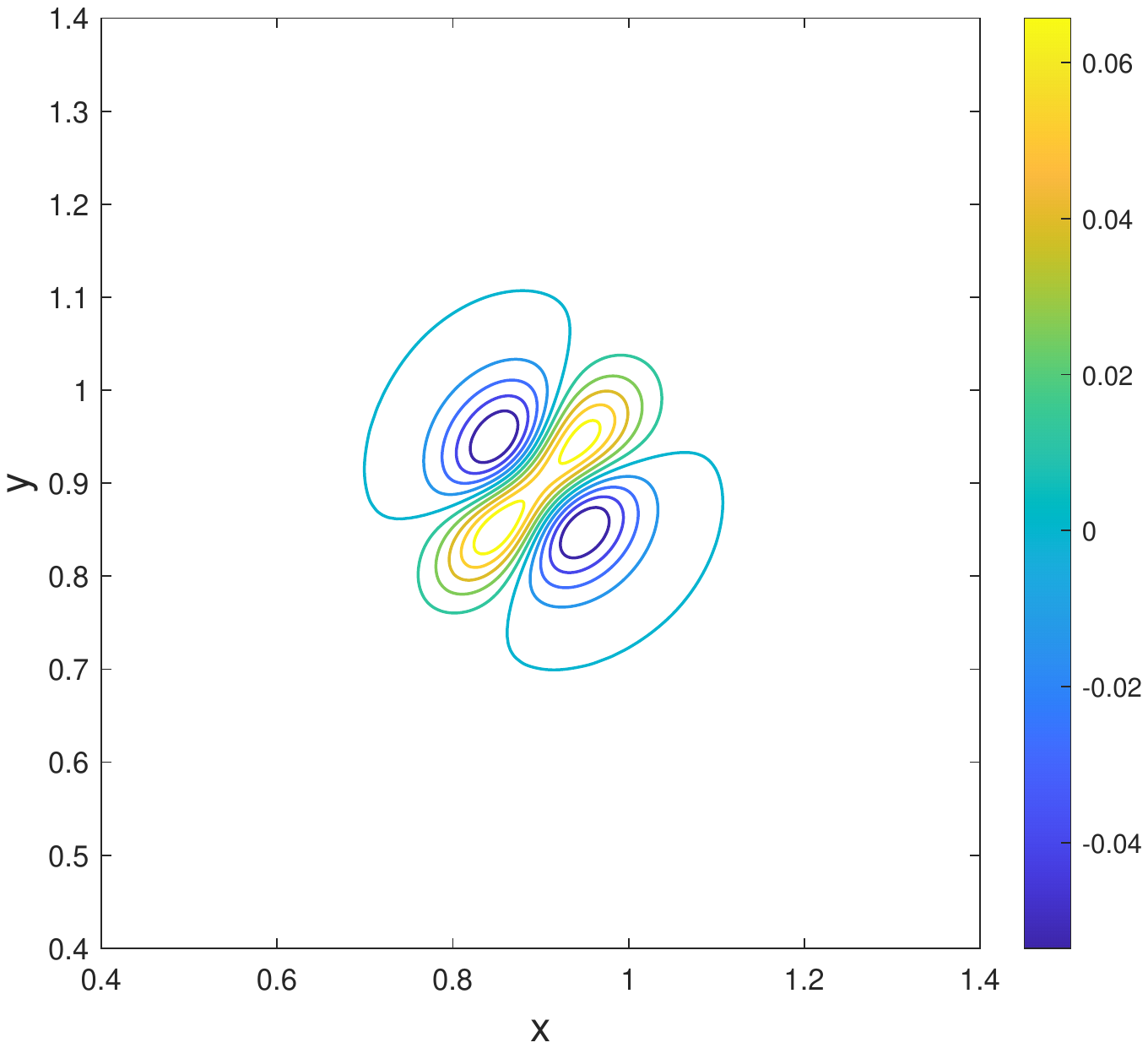}
  \includegraphics[width=0.3\textwidth]{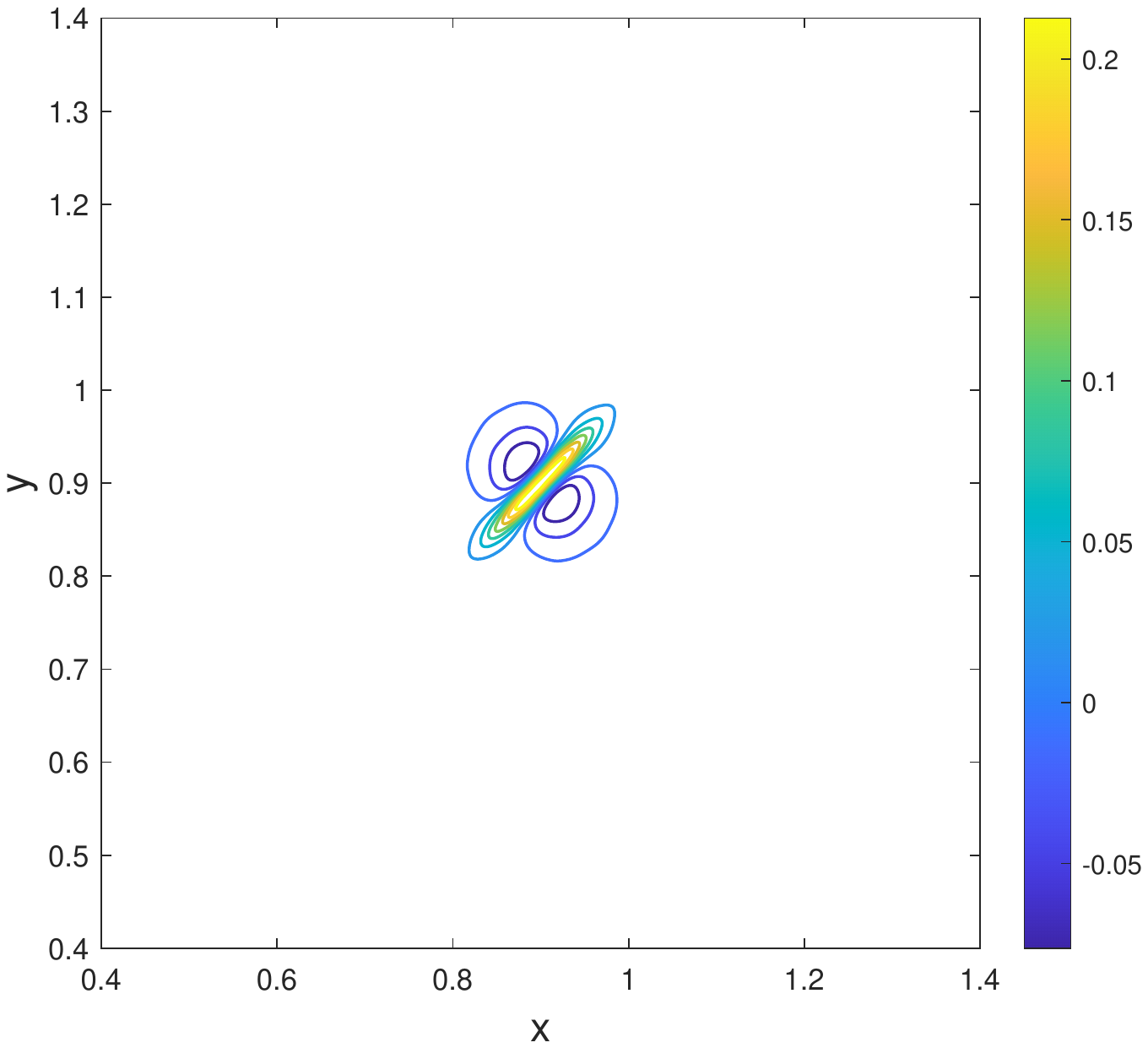}
  }

  \subfloat[]{\label{fig:covariance_J}\includegraphics[width=0.3\textwidth]{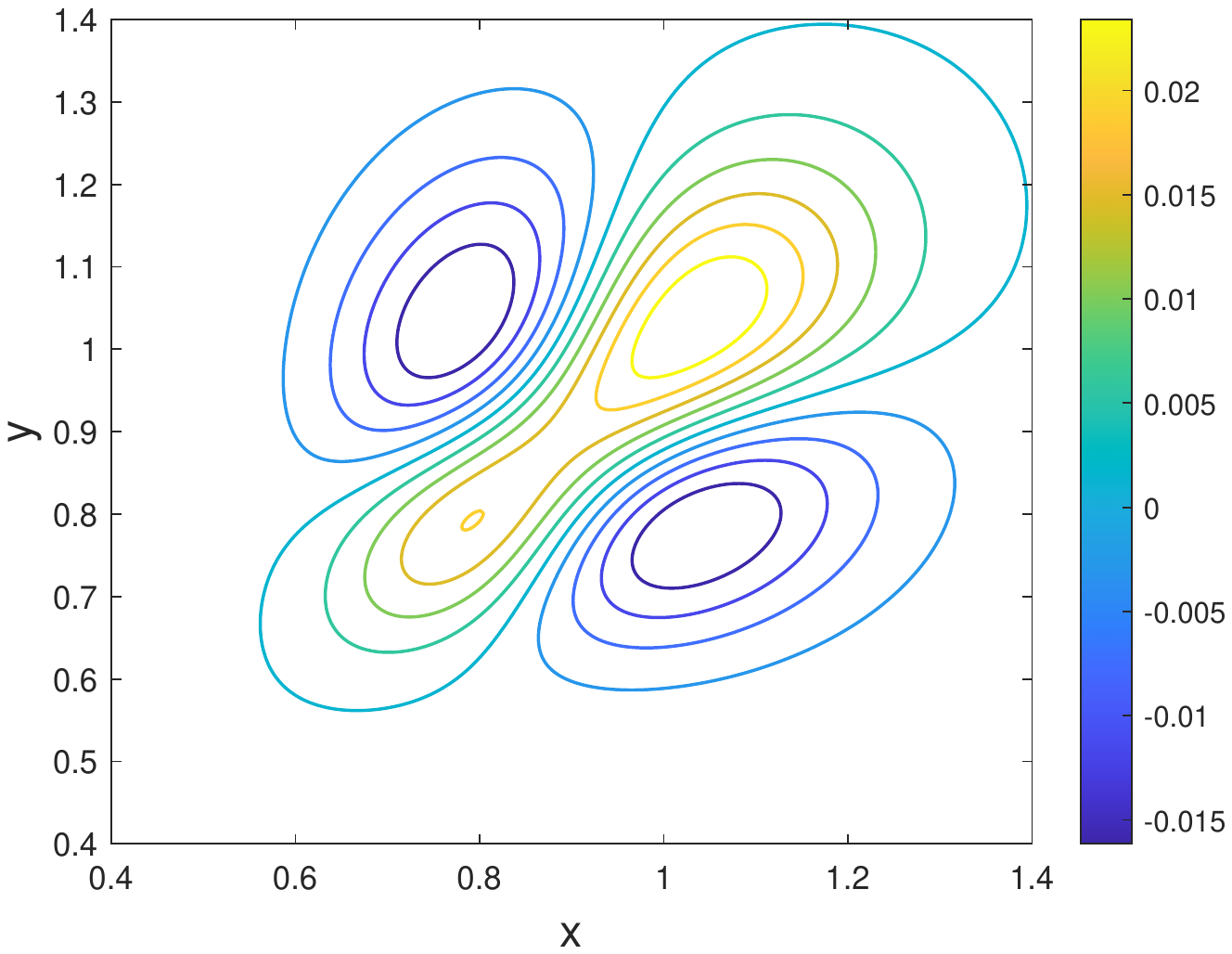}
  \includegraphics[width=0.3\textwidth]{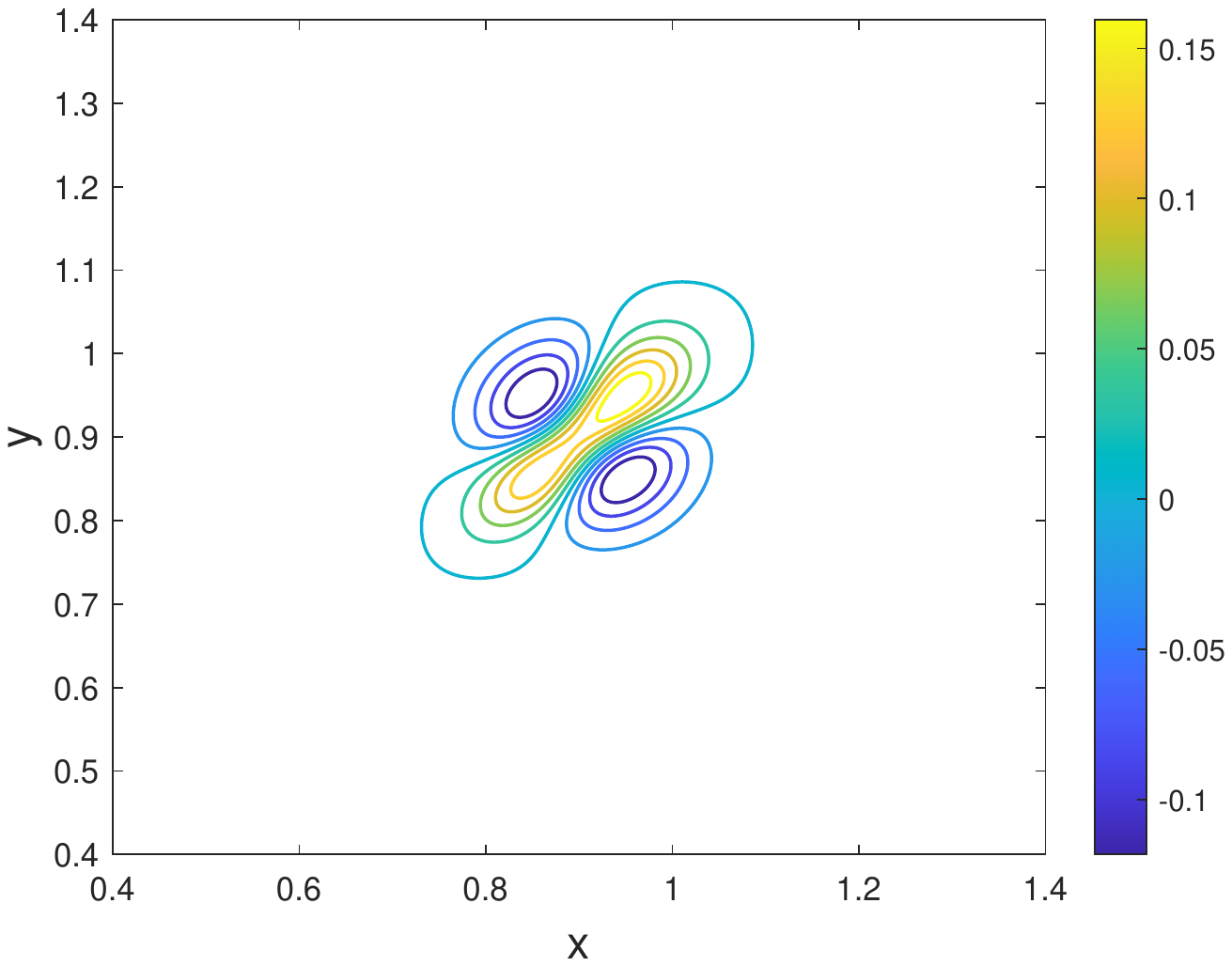}
  \includegraphics[width=0.3\textwidth]{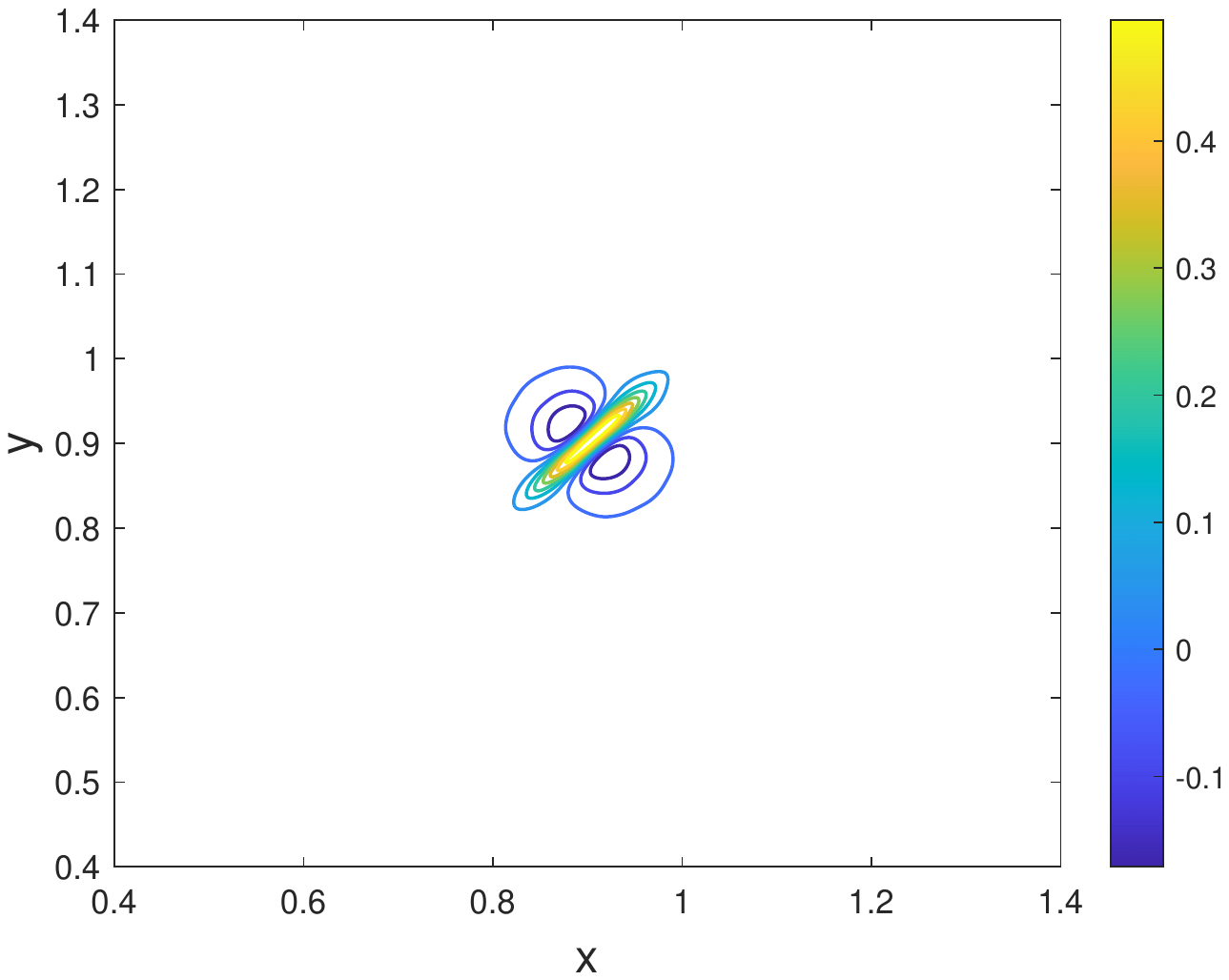}
  }
  \caption{ The plots (a)(b) show the standard deviation of particle density $\sigma[\rho^{\varepsilon}]$ and current density $\sigma[J^{\varepsilon}]$ at $t = 0.4$ for different $\varepsilon$. The plot (c) from left to right show the covariance $\text{Cov}(\rho^{\varepsilon}(x),\rho^{\eps}(y))$ at $t = 0.4$ for $\varepsilon = 2^{-6}\,,2^{-8}\,,2^{-10}$, respectively. The plot (d) shows the covariance $\text{Cov}(J^{\varepsilon}(x),J^{\varepsilon}(y))$ at $t = 0.4$ for the same $\varepsilon$ as (c). The random variables $\xi_{ij}$'s are chosen to be standard Gaussian random variables.}
\end{figure}

Finally we compare the CPU time of computing the limiting RTE and the reference Schr\"odinger equation. With the discretization mentioned above, it takes $5.8\times10^3$s to compute the RTE. In~\cref{tbl:time} we list the cost of solving the Schr\"odinger equation. It suggests that for $\varepsilon<2^{-9}$ one should switch to computing RTE as the limit for numerical efficiency. We consider $N = 10,000$ is big enough to have an accurate approximation of the statistical quantities.
\begin{table}
	\centering
	\begin{tabular}{c | c | c | c | c | c}
		\hline \hline
		$- \log_2 \varepsilon$					& 6   & 7 & 8 & 9 & 10         \\
        \hline
		CPU Time (s)                & $2.96\times 10^3$ & $7.74\times 10^3$ & $2.47\times 10^4$ & $4.25\times 10^5$ & $2.01\times 10^6$  \\
		\hline \hline
	\end{tabular}
    \caption{CPU time for computing 10,000 samples for VMSE with different $\varepsilon$ and $\xi_{ij}$'s being Gaussian random variables.}
\label{tbl:time}
\end{table}

In the second example, we purposely choose $m_1$ not to have the correct scaling as what we use in the derivation. In the derivation, we need the random perturbation to be have the order of $O(\sqrt{\varepsilon})$. This is a very typical scaling for the Schr\"odinger equation with random potential that leads to radiative transfer limit. Different scales may lead to different limits, as seen in~\cite{GuRy:2016,GuRy:2017,BaKoRy:2010}. For the VMSE, one would also expect $O(\sqrt{\varepsilon})$ to be also critical. In the following, we consider the VMSE with $O(\varepsilon)$ scale in random perturbation
\begin{equation}\label{eqn:mass_eps}
m^{\varepsilon}(t,x) = m_0(t,x) + \varepsilon m_1(\teps,\xeps)\,,
\end{equation}
and the VMSE with $O(\varepsilon^{0.4})$ scale in random mass
\begin{equation}\label{eqn:mass_eps_2_5}
m^{\varepsilon}(t,x) = m_0(t,x) + \varepsilon^{0.4} m_1(\teps,\xeps)\,.
\end{equation}
Here $m_1(t,x)$ is taken to be Gaussian random field with correlation function~\cref{eqn:correlation_num}. In~\cref{fig:mass_RTE_vs_Schr_eps}, we compare $\rho^0$ in RTE limit and $\mathbb{E}[\rho^\varepsilon]$ of VMSE with mass~\cref{eqn:mass_eps}, fixing $D = 1.5$ and $p_0= 1.5$. VMSE is computed using 10000 Monte Carlo samples and $\nkl=663, 3157, 27968$ for $\varepsilon = 2^{-6}, 2^{-8}, 2^{-10}$ respectively to ensure~\cref{eqn:KLtruncate}. It can be seen that the scattering produced by random perturbation is smaller in the limit $\varepsilon\rightarrow0$, which is also indicated in the standard deviation~\cref{fig:mass_density_std_eps}.
\begin{figure}[tbhp]
  \centering
  \subfloat[]{\label{fig:mass_RTE_vs_Schr_eps}\includegraphics[width=0.45\textwidth]{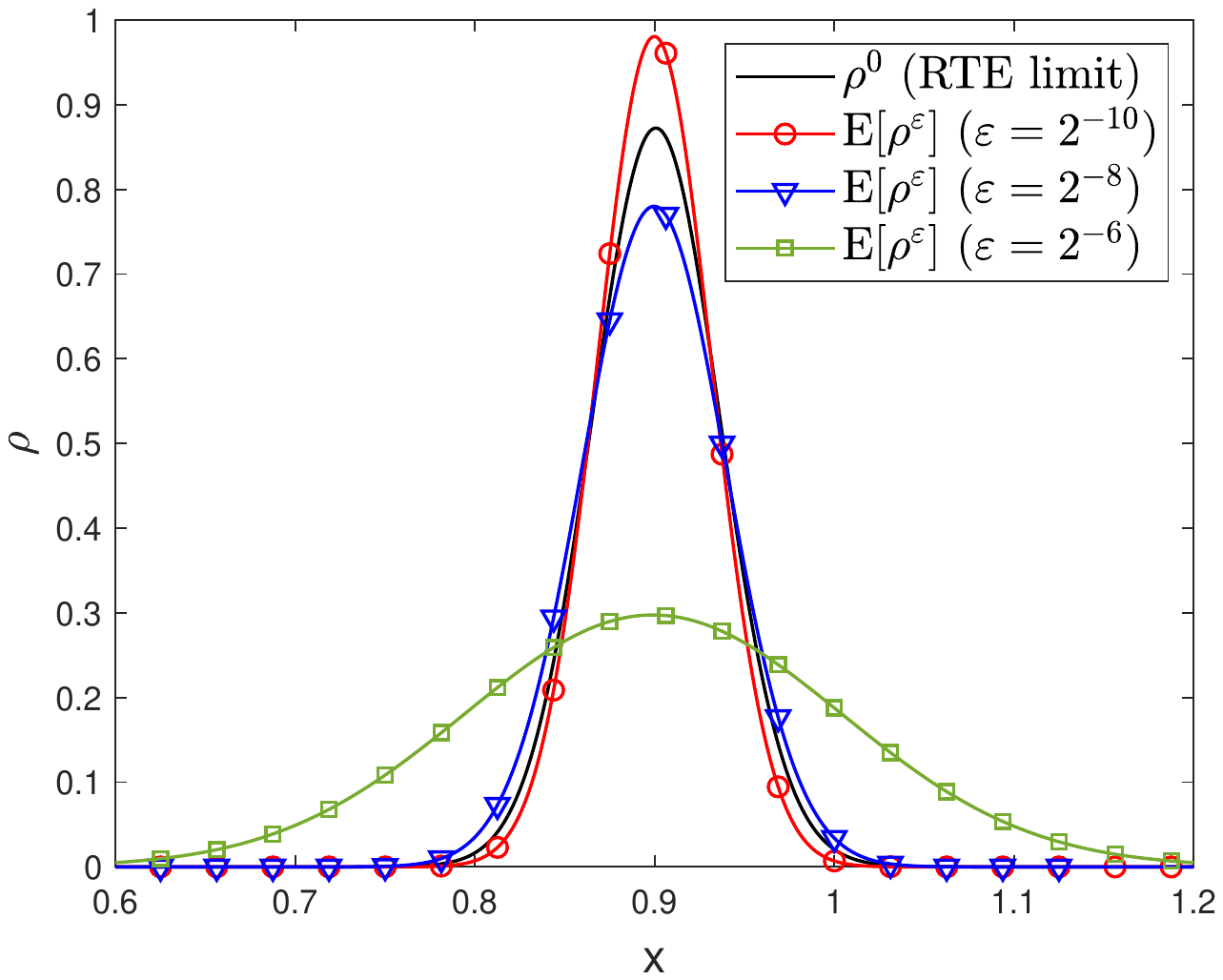}}
  \subfloat[]{\label{fig:mass_density_std_eps}\includegraphics[width=0.45\textwidth]{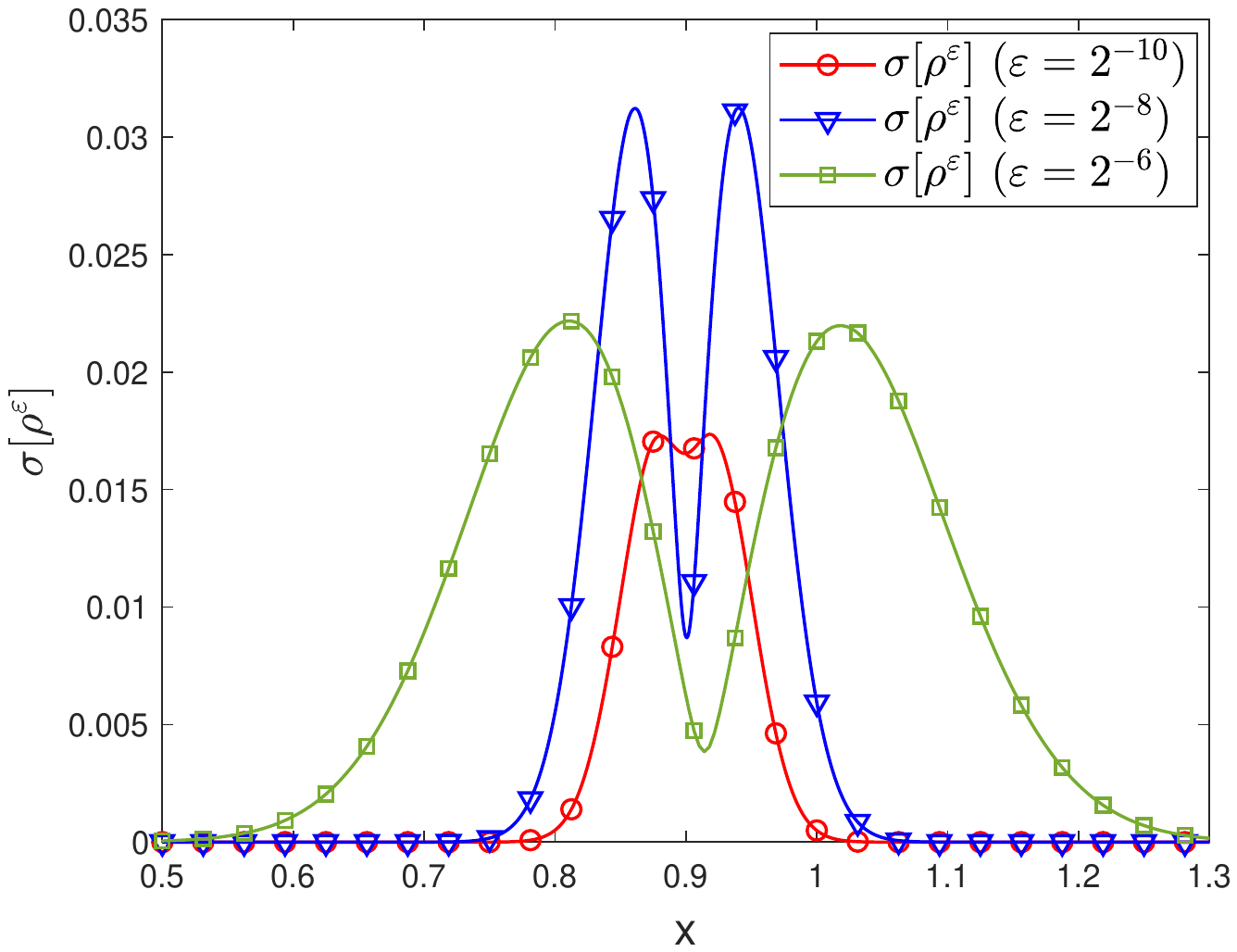}}
  \caption{(a) The plot compares particle density $\mathbb{E}[\rho^{\varepsilon}]$ for different $\varepsilon$ with $\rho^0$, defined in~\cref{eqn:density_ue} at $t = 0.4$. (b) The plot shows the standard deviation of particle density $\sigma[\rho^{\varepsilon}]$ at $t = 0.4$ for different $\varepsilon$. The random perturbation is of $O(\varepsilon)$ scale~\cref{eqn:mass_eps} with Gaussian $\xi_{ij}$ in the two plots.}
\end{figure}

In \Cref{fig:mass_RTE_vs_Schr_eps_2_5}, we plot $\mathbb{E}[\rho^\varepsilon]$ of VMSE with mass~\cref{eqn:mass_eps_2_5}, fixing $D = 1.5$ and $p_0= 1.5$. VMSE is computed in the same way as before. The standard deviations are plotted in~\cref{fig:mass_density_std_eps_2_5}. A much larger random scattering is now observed for this scale.
\begin{figure}[tbhp]
  \centering
  \subfloat[]{\label{fig:mass_RTE_vs_Schr_eps_2_5}\includegraphics[width=0.45\textwidth]{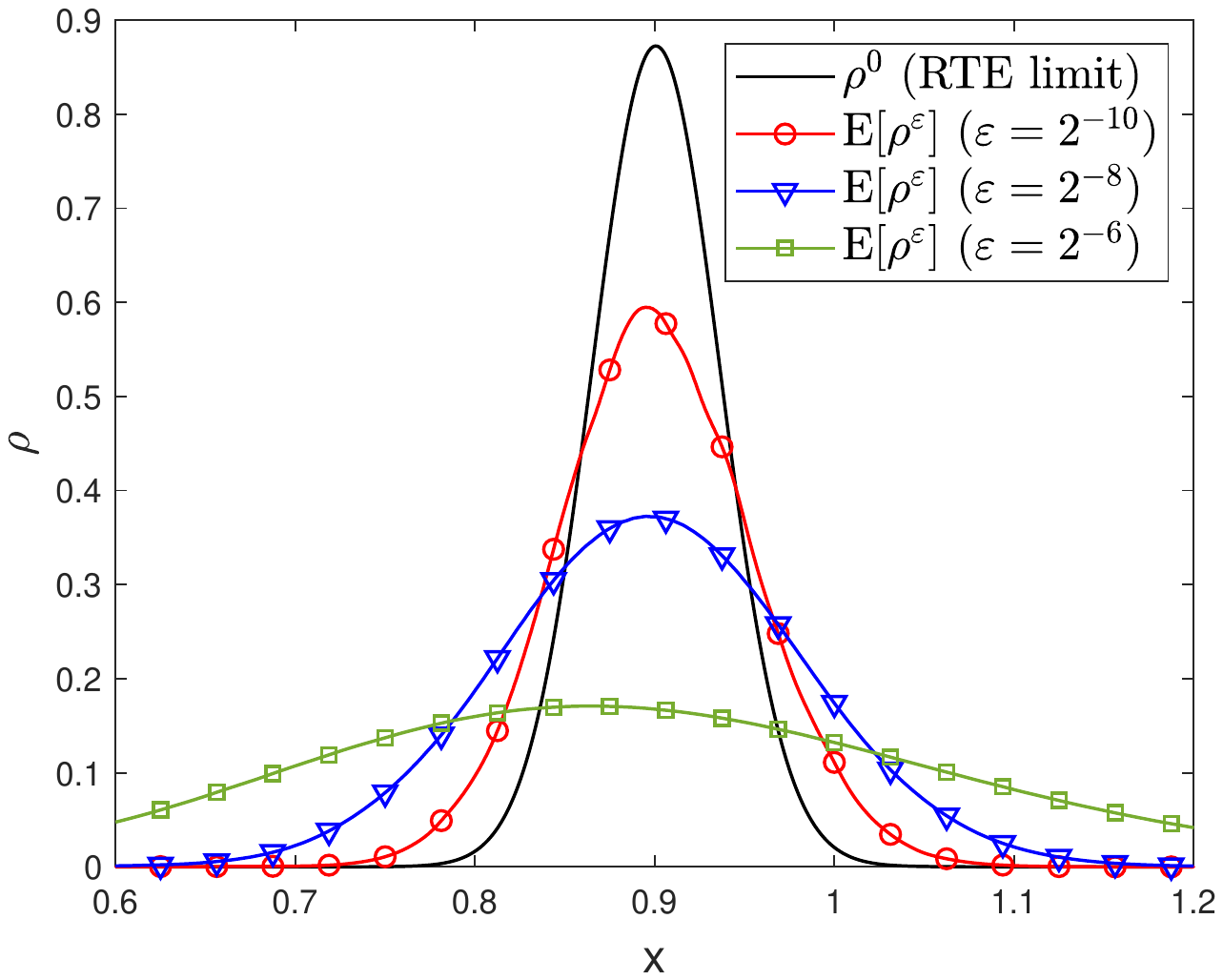}}
  \subfloat[]{\label{fig:mass_density_std_eps_2_5}\includegraphics[width=0.45\textwidth]{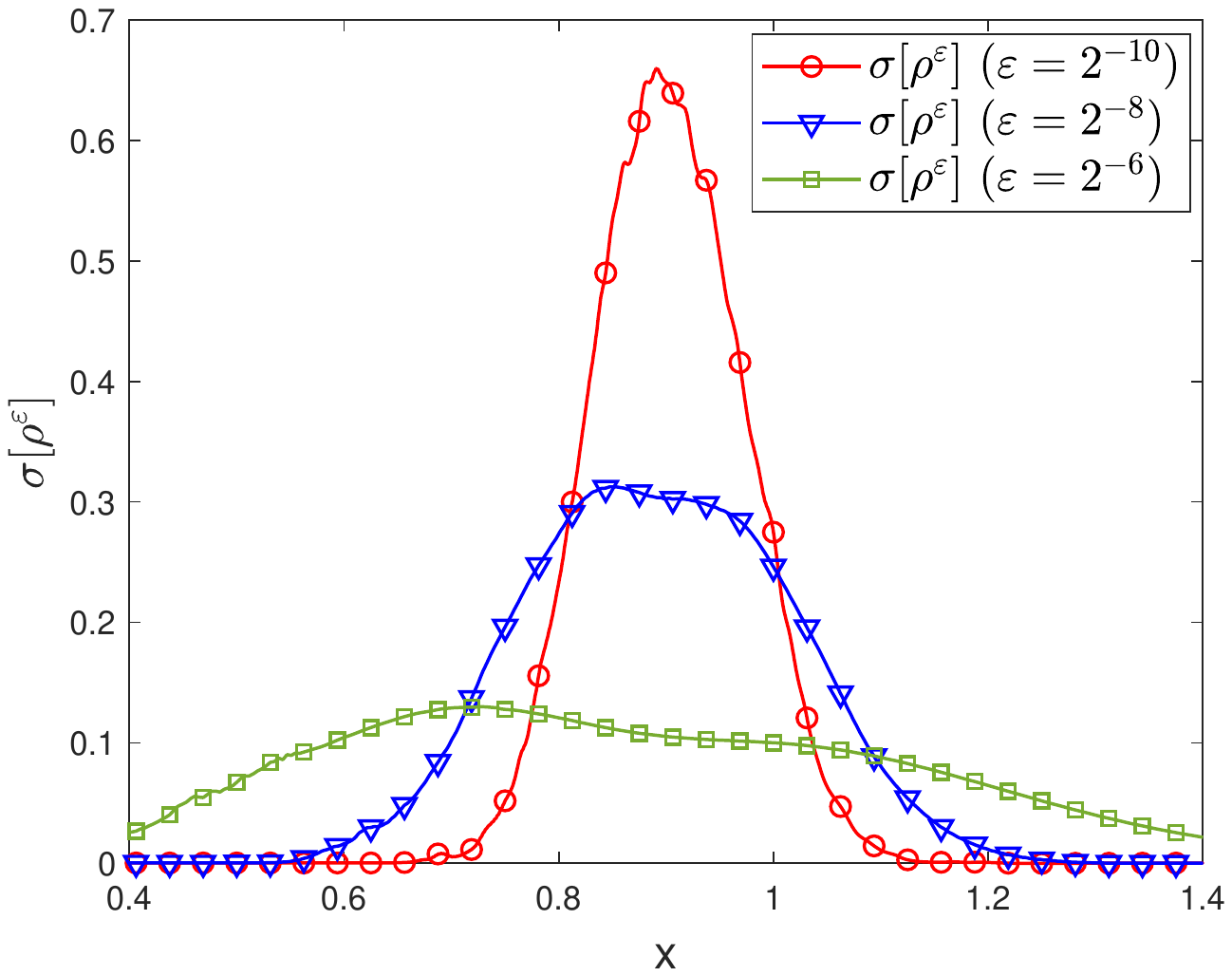}}
  \caption{(a) The plot compares particle density $\mathbb{E}[\rho^{\varepsilon}]$ for different $\varepsilon$ with $\rho^0$, defined in~\cref{eqn:density_ue} at $t = 0.4$. (b) The plot shows the standard deviation of particle density $\sigma[\rho^{\varepsilon}]$ at $t = 0.4$ for different $\varepsilon$. The random perturbation is of $O(\varepsilon^{0.4})$ scale~\cref{eqn:mass_eps_2_5} with Gaussian $\xi_{ij}$ in the two plots.}
\end{figure}

\section{Conclusion}\label{sec:conclusion}
In this paper, we systematically derived the radiative transfer equation for the solution to the varying-mass Schr\"odinger equation (VMSE) with random heterogeneities. In specific, we consider VMSE in the classical regime (the rescaled Planck constant $\varepsilon\ll 1$), and expand the corresponding Wigner equation to proper orders to obtain the asymptotic limit. We verify the derivation by numerically computing both VMSE and radiative transfer equations, and showing that the two solutions agree well.

\section*{Acknowledgment}
The research of S.C. and Q.L. are supported in part by NSF under DMS-1619778, DMS-1750488, and Wisconsin Data Science Initiatives. X.Y. was partially supported by the NSF grant DMS 1818592. Q.L. would like to thank Josselin Garnier and Guillaume Bal for valuable discussions.

\newpage

\begin{appendix}

\section{Proof of~\cref{lem:W}}\label{appendix}

The proof of~\cref{eqn:wignereqn} is direct computation using the VMSE~\cref{eqn:effect_mass_schr_m} and integration by parts. Notice that
\begin{equation}
\partial_t \we = \frac{1}{(2\pi)^d}\int_{\bbr^d} e^{\ri ky}\partial_t \ue(t,y)\uec(t,y)\rmd{y} +\frac{1}{(2\pi)^d}\int_{\bbr^d} e^{\ri ky} \ue(t,y)\partial_t\uec(t,y)\rmd{y}\,,
\end{equation}
we have, plugging in~\cref{eqn:effect_mass_schr_m}:
\begin{equation}
\begin{aligned}\label{eqn:partial_t_W}
\partial_t \we =& \frac{\ri\varepsilon}{2(2\pi)^d}\int_{\bbr^d} e^{\ri ky}\nabla_x\cdot\left(m_0\left(t,x-\epsh y\right)\nabla_x \ue\left(t,x-\epsh y\right)\right)\uec\left(t,x+\epsh y\right) \rmd y\\
              &-\frac{\ri\varepsilon}{2(2\pi)^d}\int_{\bbr^d} e^{\ri ky}\nabla_x\cdot\left(m_0\left(t,x+\epsh y\right)\nabla_x \uec\left(t,x+\epsh y\right)\right)\ue\left(t,x-\epsh y\right) \rmd y\\
              :=& \frac{\ri\varepsilon}{2(2\pi)^d}M_1-\frac{\ri\varepsilon}{2(2\pi)^d}M_2\,.
\end{aligned}
\end{equation}
Noting that $t$ serves as a parameter and doesn't affect the derivation, we suppress the $t$-dependence in the following.

Since the two terms $M_1$ and $M_2$ are conjugate with $y\to -y$ for the second term, we only study the first one. With integration by parts:
\begin{equation}\label{eqn:ibp}
\begin{aligned}
M_1=&\frac{2}{\eps}\int_{\mathbb{R}^d}\left[\nabla_y(e^{\ri ky})\cdot\nabla_x \ue\left(x-\epsh y\right)\right]m_0\left(x-\epsh y\right) \uec\left(x+\epsh y\right)\rmd y \\
&+\frac{2}{\eps}\int_{\mathbb{R}^d}e^{\ri ky}m_0\left(x-\epsh y\right)\nabla_x \ue\left(x-\epsh y\right) \cdot\nabla_y\uec\left(x+\epsh y\right)\rmd y\\
:=&I_1+I_2\,.
\end{aligned}
\end{equation}

We treat the $I_1$ and $I_2$ respectively in the following. Perform integration by parts again to $I_1$
\begin{equation}\label{eqn:ibp_2}
    \begin{aligned}
     I_1=&\frac{4}{\eps^2}\int_{\mathbb{R}^d} \Delta_y (e^{\ri ky})m_0\left(x-\epsh y\right) \left[\ue\left(x-\epsh y\right)\uec\left(x+\epsh y\right)\right]\rmd y\\
     &+\frac{4}{\eps^2}\int_{\mathbb{R}^d} \nabla_y (e^{\ri ky})\cdot\nabla_y m_0\left(x-\epsh y\right) \left[\ue\left(x-\epsh y\right)\uec\left(x+\epsh y\right)\right]\rmd y\\
     &+\frac{2}{\eps}\int_{\mathbb{R}^d}\left[\nabla_y(e^{\ri ky})\cdot\nabla_x \uec\left(x+\epsh y\right)\right]m_0\left(x-\epsh y\right) \ue\left(x-\epsh y\right)\rmd y\\
     :=&I_{11}+I_{12}+I_{13}\,.
    \end{aligned}
\end{equation}
Note that $I_1$ and the last term $I_{13}$ can be combined so that a complete $x-$gradient of $\ue\left(x-\epsh y\right)\uec\left(x+\epsh y\right)$ is available, namely one arrives at a formula for $I_1$
\begin{equation}\label{eqn:ibp_3}
    \begin{aligned}
     I_1=&\frac{1}{2}I_1+\frac{1}{2}(I_{11}+I_{12}+I_{13}) = \frac{1}{2}(I_1+I_{13})+\frac{1}{2}(I_{11}+I_{12})\\
     =&\frac{2}{\eps^2}\int_{\mathbb{R}^d} \Delta_y (e^{\ri ky})m_0\left(x-\epsh y\right) \left[\ue\left(x-\epsh y\right)\uec\left(x+\epsh y\right)\right]\rmd y\\
     +&\frac{2}{\eps^2}\int_{\mathbb{R}^d} \nabla_y e^{\ri ky}\cdot\nabla_y m_0\left(x-\epsh y\right) \left[\ue\left(x-\epsh y\right)\uec\left(x+\epsh y\right)\right]\rmd y\\
     +&\frac{1}{\eps}\int_{\mathbb{R}^d}\nabla_y(e^{\ri ky})\cdot\nabla_x \left[\ue\left(x-\epsh y\right)\uec\left(x+\epsh y\right)\right]m_0\left(x-\epsh y\right) \rmd y\,.
    \end{aligned}
\end{equation}

For $I_2$ in \cref{eqn:ibp}, integration by parts against $\nabla_x\ue\left(x-\epsh y\right)$ produces
\begin{equation}\label{eqn:ibp_4}
    \begin{aligned}
     I_2 = &\frac{4}{\eps^2}\int_{\mathbb{R}^d}\left[\nabla_y(e^{\ri ky})\cdot\nabla_y\uec\left(x+\epsh y\right)\right]  m_0\left(x-\epsh y\right) \ue\left(x-\epsh y\right) \rmd y \\
     +&\frac{4}{\eps^2}\int_{\mathbb{R}^d}e^{\ri ky}\left[\nabla_y m_0\left(x-\epsh y\right)\cdot\nabla_y\uec\left(x+\epsh y\right)\right]\ue\left(x-\epsh y\right) \rmd y\\
     +&\int_{\mathbb{R}^d}e^{\ri ky}m_0\left(x-\epsh y\right)\ue\left(x-\epsh y\right) \Delta_x\uec\left(x+\epsh y\right)\rmd y:=I_{21}+I_{22}+I_{23}\,.
    \end{aligned}
\end{equation}
On the other hand, integration by parts against $\nabla_y\uec\left(x+\epsh y\right)$ gives
\begin{equation}\label{eqn:ibp_5}
    \begin{aligned}
     I_2 = &\frac{4}{\eps^2}\int_{\mathbb{R}^d}\left[\nabla_y(e^{\ri ky})\cdot\nabla_y\ue\left(x-\epsh y\right)\right]  m_0\left(x-\epsh y\right) \uec\left(x+\epsh y\right) \rmd y \\
     +&\frac{4}{\eps^2}\int_{\mathbb{R}^d}e^{\ri ky}\left[\nabla_y m_0\left(x-\epsh y\right)\cdot\nabla_y\ue\left(x-\epsh y\right)\right] \uec\left(x+\epsh y\right) \rmd y\\
     +&\int_{\mathbb{R}^d}e^{\ri ky}m_0\left(x-\epsh y\right)\uec\left(x+\epsh y\right) \Delta_x\ue\left(x-\epsh y\right)\rmd y:=I'_{21}+I'_{22}+I'_{23}\,.
    \end{aligned}
\end{equation}
Note that $I_{21}$ and $I'_{21}$ can be combined after another integration by parts
\begin{equation}\label{eqn:ibp_6}
    \begin{aligned}
     I_{21} + I'_{21}= &-\frac{4}{\eps^2}\int_{\mathbb{R}^d}\nabla_y(e^{\ri ky})\cdot\nabla_y m_0\left(x-\epsh y\right)  \left[\ue\left(x-\epsh y\right)\uec\left(x+\epsh y\right)\right] \rmd y\\
     -&\frac{4}{\eps^2}\int_{\mathbb{R}^d}m_0\left(x-\epsh y\right) \Delta_y(e^{\ri ky})\left[\ue\left(x-\epsh y\right)\uec\left(x+\epsh y\right)\right]   \rmd y\,.
    \end{aligned}
\end{equation}
$I_{22}$ and $I'_{22}$ can be combined similarly
\begin{equation}\label{eqn:ibp_7}
    \begin{aligned}
     I_{22} + I'_{22} = &-\frac{4}{\eps^2}\int_{\mathbb{R}^d}\nabla_y(e^{\ri ky})\cdot\nabla_y m_0\left(x-\epsh y\right)  \left[\ue\left(x-\epsh y\right)\uec\left(x+\epsh y\right)\right] \rmd y\\
     -&\frac{4}{\eps^2}\int_{\mathbb{R}^d}e^{\ri ky}\Delta_ym_0\left(x-\epsh y\right)\left[\ue\left(x-\epsh y\right)\uec\left(x+\epsh y\right)\right]    \rmd y\,.
    \end{aligned}
\end{equation}
Hence using~\cref{eqn:ibp_4}-\cref{eqn:ibp_7} and the trick in~\cref{eqn:ibp_3}, one derives the formula for $I_2$ in~\cref{eqn:ibp}
\begin{equation}\label{eqn:ibp_8}
    \begin{aligned}
     I_2 = & \frac{1}{4}(I_{21}+I_{22}+I_{23})+\frac{1}{2}I_2+\frac{1}{4}(I'_{21}+I'_{22}+I'_{23}) \\
     = & \left(\frac{1}{4}I_{23}+\frac{1}{2}I_2 +\frac{1}{4}I'_{23} \right) + \frac{1}{4}(I_{21}+I'_{21}) + \frac{1}{4}(I_{22}+I'_{22})\\
     = & \frac{1}{4}\int_{\mathbb{R}^d}e^{\ri ky}m_0\left(x-\epsh y\right)\Delta_x\left[\ue\left(x-\epsh y\right) \uec\left(x+\epsh y\right)\right]\rmd y\\
     -&\frac{2}{\eps^2}\int_{\mathbb{R}^d}\nabla_y(e^{\ri ky})\cdot\nabla_y m_0\left(x-\epsh y\right)  \left[\ue\left(x-\epsh y\right)\uec\left(x+\epsh y\right)\right] \rmd y\\
     -&\frac{1}{\eps^2}\int_{\mathbb{R}^d} m_0\left(x-\epsh y\right)\Delta_y(e^{\ri ky})\left[\ue\left(x-\epsh y\right)\uec\left(x+\epsh y\right)\right]   \rmd y\\
     -&\frac{1}{\eps^2}\int_{\mathbb{R}^d}e^{\ri ky}\Delta_y m_0\left(x-\epsh y\right)\left[\ue\left(x-\epsh y\right)\uec\left(x+\epsh y\right)\right]    \rmd y\,.
    \end{aligned}
\end{equation}
Finally from~\cref{eqn:ibp_3} and~\cref{eqn:ibp_8}, one gets
\begin{equation}\label{eqn:ibp_9}
    \begin{aligned}
    M_1=&\frac{1}{4}\int_{\bbr^d} e^{\ri ky} m_0\left(x-\epsh y\right) \Delta_x\left[ \ue\left(x-\epsh y\right) \uec\left(x+\epsh y\right)\right]\rmd y\\
    &+\frac{1}{\varepsilon}\int_{\bbr^d} m_0\left(x-\epsh y\right) \nabla_y (e^{\ri ky})\cdot \nabla_x\left[ \ue\left(x-\epsh y\right) \uec\left(x+\epsh y\right)\right]\rmd y\\
    &+\frac{1}{\varepsilon^2}\int_{\bbr^d}  m_0\left(x-\epsh y\right)\Delta_y (e^{\ri ky}) \left[\ue\left(x-\epsh y\right) \uec\left(x+\epsh y\right)\right]\rmd y\\
    &-\frac{1}{\varepsilon^2}\int_{\bbr^d}  e^{iky}\Delta_y m_0\left(x-\epsh y\right) \left[\ue\left(x-\epsh y\right) \uec\left(x+\epsh y\right)\right]\rmd y\\
    :=& T_1+T_2+T_3+T_4\,.
\end{aligned}
\end{equation}

All the $T_i$ terms can be explicitly expressed by the Wigner transform~\cref{eqn:wigner_transform}. In particular:
\begin{equation}\label{eqn:uu_to_W_2}
\begin{aligned}
T_1 &= \int_{\bbr^d} e^{\ri px} \frmm(p) \Delta_x \we\left(x,k-\epsh p\right) \rmd p\,,\\
T_2 & = \int_{\bbr^d} e^{\ri px} \frmm(p) ik\cdot\nabla_x \we\left(x,k-\epsh p\right) \rmd p\,,\\
T_3 & = \int_{\bbr^d} -|k|^2 e^{\ri px} \frmm(p)  \we\left(x,k-\epsh p\right) \rmd p\,,\\
T_4 & = \eps^2 \int_{\bbr^d} |p|^2 e^{\ri px} \frmm(p) \we\left(x,k-\epsh p\right) \rmd p\,.
\end{aligned}
\end{equation}

We use $T_1$ as an example to show this. Recalling:
\[
\Delta_x\we(x,k) = \frac{1}{(2\pi)^d}\int e^{\ri ky}\Delta_x\left[\ue\left(x-\frac{\varepsilon}{2}y\right)\uec\left(x+\frac{\varepsilon}{2}y\right)\right]\rmd{y}\,,
\]
we have
\begin{equation}\label{eqn:u_to_W1}
    \begin{aligned}
&\int_{\bbr^d} e^{\ri px} \frmm(p) \Delta_x \we\left(x,k-\epsh p\right) \rmd p\\
=&\frac{1}{(2\pi)^d}\int \int\int e^{\ri px} e^{-\ri pz}m_0(z)e^{\ri (k-\frac{\varepsilon}{2}p) y}\Delta_x \left[\ue\left(x-\frac{\varepsilon}{2}y\right)\uec\left(x+\frac{\varepsilon}{2}y\right)\right] \rmd z\rmd p\rmd y\,,\\
=&\int_{\bbr^d} e^{\ri ky} m_0\left(x-\epsh y\right) \Delta_x\left[ \ue\left(x-\epsh y\right) \uec\left(x+\epsh y\right)\right]\rmd y=\frac{1}{4}T_1\,,
\end{aligned}
\end{equation}
where we used the fact that
\begin{equation}
\delta(x) = \invpi \int_{\bbr^d} e^{\ri xz} \rmd z\,,\quad\text{and}\quad \invpi \int\int f(x) e^{\ri xz} \rmd z\rmd x = f(0)\,.
\end{equation}

Using~\cref{eqn:uu_to_W_2}, we get
\begin{equation}\label{eqn:ibp_s1}
    \begin{aligned}
    M_1=&\frac{1}{4}\int_{\bbr^d} e^{\ri px} \frmm(p) \Delta_x \we\left(x,k-\epsh p\right) \rmd p\\ &+\frac{1}{\varepsilon}\int_{\bbr^d} e^{\ri px} \frmm(p) \ri k\cdot\nabla_x \we\left(x,k-\epsh p\right) \rmd p\\
    &+\int_{\bbr^d} |p|^2 e^{\ri px} \frmm(p) \we\left(x,k-\epsh p\right) \rmd p\\
    &-\frac{1}{\varepsilon^2}\int_{\bbr^d} |k|^2 e^{\ri px} \frmm(p) \we\left(x,k-\epsh p\right) \rmd p\,.
\end{aligned}
\end{equation}
By the conjugate argument, one gets, setting $p\to -p$:
\begin{equation}\label{eqn:ibp_s2}
    \begin{aligned}
    M_2=&\frac{1}{4}\int_{\bbr^d} e^{\ri px} \frmm(p) \Delta_x \we\left(x,k+\epsh p\right) \rmd p\\
    &-\frac{1}{\varepsilon}\int_{\bbr^d} e^{\ri px} \frmm(p) \ri k\cdot\nabla_x \we\left(x,k+\epsh p\right) \rmd p\\
    &+\int_{\bbr^d} |p|^2 e^{\ri px} \frmm(p) \we\left(x,k+\epsh p\right) \rmd p\\
    &-\frac{1}{\varepsilon^2}\int_{\bbr^d} |k|^2 e^{\ri px} \frmm(p) \we\left(x,k+\epsh p\right) \rmd p\,.
\end{aligned}
\end{equation}
Finally, substitute~\cref{eqn:ibp_s1} and~\cref{eqn:ibp_s2} into~\cref{eqn:partial_t_W}, and we arrive at the Wigner equation in~\cref{eqn:wignereqn}.

\end{appendix}

\newpage
\bibliographystyle{siamplain}
\bibliography{PWE}
\end{document}